\newtheorem{Th}{Theorem}
\newtheorem{Prop}{Proposition}
\newtheorem{Lemma}{Lemma}
\newtheorem{Remark}{Remark}
\newtheorem{Def}{Definition}
\newtheorem{Cor}{Corollary}
\def\scalar(#1,#2){(#1\mid#2)}
\renewcommand{\hat}{\widehat}
\newcommand{\cs}{\mathcal{S}}
\newcommand{\cb}{\mathcal{B}}
\newcommand{\ce}{\mathcal{E}}
\newcommand{\cp}{\mathcal{P}}
\newcommand{\ck}{\mathcal{K}}
\newcommand{\xbm}{(X,\mathcal{B},\mu)}
\newcommand{\ycn}{(Y,\mathcal{C},\nu)}
\newcommand{\ov}{\overline}
\newcommand{\bs}{\mathbb{S}}
\newcommand{\bd}{\mathbb{D}}
\newcommand{\Z}{{\mathbb{Z}}}
\newcommand{\N}{{\mathbb{N}}}
\newcommand{\E}{{\mathbb{E}}}
\newcommand{\vep}{\varepsilon}
\newcommand{\va}{\varphi}
\newcommand{\Pa}{\mathcal{P}}
\newcommand{\tend}[3][]{\xrightarrow[#2\to#3]{#1}}
\newcommand{\egdef}{:=}
\newcommand{\ind}[1]{\mathbbmss{1}_{#1}} 
\newcommand{\spdj}{\perp_{\mbox{\scriptsize sp}}}
\newcommand{\Supp}{\Sigma}
\newcommand{\mob}{\boldsymbol{\mu}}
\newcommand{\bun}{\boldsymbol{1}}
\newcommand{\tcs}{T_{\textrm{\scriptsize cs}}}
\newcommand{\ts}{T_{\textrm{\scriptsize symb}}}
\DeclareMathOperator{\Id}{Id}
\title[Spectral disjointness of powers for rank-one transformations]{On spectral disjointness of powers for rank-one transformations and Möbius orthogonality}
\author{El Houcein El Abdalaoui}
\author{Mariusz Lema\'nczyk}
\author{Thierry de la Rue}
\address{El Houcein El Abdalaoui, Thierry de la Rue:
Laboratoire de Math\'ematiques Rapha\"el Salem,
Normandie Université, Universit\'e de Rouen, CNRS --
Avenue de l'Universit\'e --
76801 Saint \'Etienne du Rouvray, France.}
\email{elhoucein.elabdalaoui@univ-rouen.fr\\Thierry.de-la-Rue@univ-rouen.fr}
\address{Mariusz Lema\'nczyk: Faculty of Mathematics and Computer Science, Nicolaus Copernicus University, 12/18 Chopin street, 87-100 Toru\'{n}, Poland}
\email{mlem@mat.umk.pl}
\thanks{Research supported by Narodowe Centrum Nauki grant DEC-2011/03/B/ST1/00407}
\thanks{The research was done during the visit of the second author at Rouen
University in September 2012.}
\begin{document}

\maketitle

\begin{abstract}
  We study the spectral disjointness of the powers of a rank-one transformation. For a large class of rank-one constructions, including those for which the cutting and stacking parameters are bounded, and other examples such as rigid generalized Chacon's maps and Katok's map,  we prove that different positive powers of the transformation are pairwise spectrally disjoint on the continuous part of the spectrum. Our proof involves the existence, in the weak closure of $\{U_T^k:\:k\in\Z\}$, of ``sufficiently many'' analytic functions of the operator $U_T$.
  
  Then we apply these disjointness results to prove Sarnak's conjecture for the (possibly non-uniquely ergodic) symbolic models associated to these rank-one constructions: All sequences realized in these models are orthogonal to the Möbius function. 
  
  \end{abstract}
  
  \renewcommand{\abstractname}{Résumé}
\begin{abstract}
  Nous étudions la disjonction spectrale des puissances d'une transformation de rang un. Pour une large classe de constructions de rang un, incluant celles dont les paramètres de découpage et empilage sont bornés, ainsi que d'autes exemples commes les transformations de Chacon généralisées et la transformation de Katok, nous prouvons que les puissances positives de la transformation sont deux-à-deux spectralement disjointes sur la partie continue du spectre.   
  Notre preuve s'appuie sur l'existence, dans la fermeture faible de $\{U_T^k:\:k\in\Z\}$, de suffisamment de fonctions analytiques de l'opérateur $U_T$. 
  
  Nous appliquons ensuite ces résultats de disjonction pour prouver la conjecture de Sarnak dans les modèles symboliques associés à ces constructions de rang un (qui peuvent ne pas être uniquement ergodiques) : toutes les suites réalisées dans ces modèles sont orthogonales à la fonction de Möbius.
\end{abstract}

\section{Introduction}
\label{sec:Intro}

\subsection{Rank-one automorphisms}
The class of rank-one automorphisms has been under intensive study in ergodic theory for many decades since the works of Baxter~\cite{B}, Chacon~\cite{C}, Katok-Stepin~\cite{K-S1,K-S2} and Ornstein~\cite{Or}, being both a source of interesting examples and constructions, as well as developing its own methods.

Recall that an automorphism $T$ of a given a standard Borel probability space $\xbm$ is said to be \emph{rank-one} if
there exists a sequence of (partial) partitions $(\mathcal{P}_n)$ of $X$ which are Rokhlin towers for the transformation $T$, that is to say of the form $\{F_n,TF_n,\ldots, T^{h_n-1}F_n\}$, such that for each $A\in\cb$
$$ \min\{ \mu(A\triangle A_n),\ A_n\mbox{ measurable with respect to }\mathcal{P}_n\} \tend{n}{\infty} 0. $$
Moreover, we can always assume that the sequence $(\mathcal{P}_n)$ of Rokhlin towers is \emph{increasing}, which means that for each $n$, $F_{n+1}\subset F_n$ and the levels $T^iF_n$ of $\mathcal{P}_n$ are unions of levels of $\mathcal{P}_{n+1}$. The construction of such a transformation can be realized
by the cutting and stacking method~\cite{Fri}, \cite{Na} (Chapter~7), which visually can be described as a series of refining constructions: At stage $n\geq1$ we have a Rokhlin tower $\mathcal{P}_n=\{F_n,TF_n,\ldots, T^{h_n-1}F_n\}$ whose base $F_n$, visualized as an interval, is then divided into $p_n$ equal subintervals, giving rise to $p_n$ columns of $\mathcal{P}_n$ which are numbered from $0$ to $p_n-1$. For each $i=0,\ldots,p_n-1$, above column $i$, we add $s_{n,i}$ spacers, then stack columns with added spacers one above another to obtain $\mathcal{P}_{n+1}$, where the dynamics is to jump by one level up. 
Assuming that the first tower is reduced to a single level $\mathcal{P}_1=\{F_1\}$, as we can always do without loss of generality, the construction of the rank-one automorphism is then completely described by the sequences of parameters $(p_n)_{n\ge 1}$ and $(s_{n,i})_{n\ge 1, 0\le i\le p_n-1}$. 
There are many other ways to define rank-one transformations~\cite{Fe},~\cite{Ju}, including symbolic models which are described in details in Section~\ref{sec:symbolic} (see also Section~\ref{sec:integral automorphisms}).

Despite the fact that the spectral theory of rank-one automorphisms is rather well developed (see e.g.~\cite{Ab0}, \cite{Ab1}, \cite{Ad}, \cite{Ad-Fr}, \cite{Bo0}, \cite{Ch-Na}, \cite{Fe}, \cite{K-S1}, \cite{Kl-Re}, \cite{Na}, \cite{Pr-Ry}, \cite{Ry}), it still possesses many important open problems\footnote{The most famous open problem being whether there exists a rank-one automorphism with Lebesgue spectrum. If this is so, it would give the positive answer to the long-standing Banach problem of existence of automorphism with simple Lebesgue spectrum.}.
In this paper we will be mostly interested in the problem of spectral disjointness of positive\footnote{Recall that any automorphism $T$ is spectrally isomorphic to its inverse.} different powers $T^n$, $n\geq1$, for a rank-one automorphism.
Recall that two automorphisms $T$ and $S$ are \emph{spectrally disjoint} if the maximal spectral types $\sigma_T$ and $\sigma_S$ of their associated Koopman unitary operators $U_T:\ f\mapsto f\circ T$ and $U_S:\ g\mapsto g\circ S$ are mutually singular. (We only consider the action of these operators on the subspace of $L^2$ orthogonal to constant functions.) We write in this case $T\spdj S$. The spectral disjointness question for different powers of $T$ asks whether or not the images of $\sigma_T$ via the maps $z\mapsto z^n$, $z\mapsto z^m$ are mutually singular whenever $n\neq m$. This question is interesting for itself in the class of rank-one transformations, and even though this disjointness property holds for a generic rank-one transformation~\cite{C-N, DJ-L}, there exist weakly mixing rank-one automorphisms whose different positive powers can be even isomorphic~\cite{Ag} (see also~\cite{Dan} and~\cite{Ry3}). 

Observe that if the automorphism $T$ is not weakly mixing, then different powers of $T$ always share some non-trivial eigenvalues, hence are never spectrally disjoint. In the non-weakly mixing case, the question of spectral disjointness of different positive power should therefore be reformulated, and we rather ask whether the \emph{continuous parts} of their maximal spectral types are mutually singular. 

When the present paper was under final redaction, we received the preprint by V.V.~Ryzhikov \cite{Ry4}, in which he proves that all weakly mixing bounded rank-one constructions have disjoint (in the Furstenberg sense) positive powers.

\subsection{Orthogonality with Möbius function - Sarnak's conjecture}
\label{sec:Sarnak}
Another motivation to study spectral disjointness of different powers in the class of rank-one transformations was recently taken up by Bourgain in~\cite{Bo} and deals with Sarnak's conjecture.
Recall that the Möbius function $\mob:\N\to\Z$ is defined by $\mob(1)=1$, $\mob(n)=0$ for non-square-free positive integers, and $\mob(n)=\pm1$ depending on the parity of the number of prime factors for the remaining positive integers. (We use here the bold version of the symbol $\mob$ to distinguish the Möbius function from the probability measure $\mu$.)
Sarnak's conjecture~\cite{Sa} states that for each homeomorphism $T$ of a compact metric space $X$ with zero topological entropy, any $f\in C(X)$ and any $x\in X$, the sequence $\bigl(f(T^nx)\bigr)_{n\geq1}$ is orthogonal to the Möbius function,\textit{ i.e.}
\begin{equation}
\label{eq:s1}
\frac1N\sum_{n=1}^N f(T^nx)\,\mob(n) \tend{N}{\infty} 0.
\end{equation}
Let $(X,T)$ be a uniquely ergodic topological system, and $\mu$ its invariant probability measure. Following~\cite{Bo-Sa-Zi}, we say that $(X,T,\mu)$ has {\em Möbius orthogonal prime powers} 
if  for  each $f\in C(X)$ with $\int_X f\,d\mu=0$ and each $x\in X$, for each sufficiently large different prime numbers $p,q$
\begin{equation}
\label{eq:s2}
\frac1N\sum_{n=1}^Nf(T^{pn}x)f(T^{qn}x)\tend{N}{\infty}0.
\end{equation}
Compared to the classical notion of disjointness in ergodic theory~\cite{Fu}, and following~\cite{Bo-Sa-Zi}, we have the following chain of implications:
\begin{equation}\label{eq:s3}\begin{array}{l}
\mbox{Spectral disjointness of different primes powers}\;\Rightarrow \\
\mbox{Disjointness of different primes powers}\;\Rightarrow\\
\mbox{Möbius orthogonality of prime powers}\;\Rightarrow\;\mbox{Sarnak's conjecture for }(X,T).\end{array}
\end{equation}

In fact, it is proved in~\cite{Bo-Sa-Zi} that Möbius orthogonality of prime powers implies the validity of a generalized version of~\eqref{eq:s1} in which $\mob(n)$ can be replaced by any bounded multiplicative function of the positive integer $n$ (that is, any function $\boldsymbol{\nu}(n)$ satisfying $\boldsymbol{\nu}(n m)=\boldsymbol{\nu}(n)\,\boldsymbol{\nu}(m)$ whenever $n$ and $m$ are coprime).

By considering consecutively an ergodic rotation on finitely many points, and an irrational rotation on the circle, we see that we cannot reverse the last two of the three implications in~(\ref{eq:s3}).
To see that the first implication cannot be reversed either, consider first $S$ with the MSJ property \cite{Ju-Ru} acting on a space $\ycn$ (and having singular spectrum\footnote{It is an open question whether there exists an MSJ automorphism with Lebesgue spectrum. If it exists this would give us an example for which we have disjointness of different positive powers, while all of them are spectrally isomorphic.}). Then $S^p\perp S^q$ for $1\leq p<q$ by~\cite{Ju-Ru}.
Consider now a 2-point extension $T$ (acting on $\xbm$) of $S$ with the property that the spectrum $L^2\xbm\ominus L^2\ycn$ is Lebesgue; this can be done using~\cite{He-Pa}. Now, by \cite{Ju-Ru}, $T$ is simple, and $T^p$ and $T^q$ remain disjoint for $1\leq p<q$ still (because of weak mixing of all transformations considered here, and by the lifting disjointness property by group extensions~\cite{Fu}). On the other hand, $T^p$ is not spectrally disjoint with $T^q$ because both have a Lebesgue component in their spectrum.

When all different positive (prime) powers of an automorphism $T$ are disjoint, Sarnak's conjecture holds for every uniquely ergodic topological model of $T$. However,  it seems unclear whether, if in one uniquely ergodic topological model of an automorphism $T$ Sarnak's conjecture
holds, then it holds in \emph{all} uniquely ergodic topological models of $T$. 

In~\cite{Bo}, Bourgain considers all rank-one constructions in which the cutting and stacking parameters, $p_n$ and $s_{n,i}$, are uniformly bounded and $s_{n,p_n-1}=0$ (that is, with no spacer on the last column), and deals with the uniquely ergodic topological symbolic models of such transformations. He proves some kind of spectral disjointness for different prime numbers rescalings of generalized Riesz products describing the maximal spectral type of a rank-one transformation. This kind of spectral disjointness turns out to be sufficient to obtain a basic Möbius orthogonality lemma proved in~\cite{Bo-Sa-Zi}. As indicated in~\cite{Bo}, the positive answer to Sarnak's conjecture in the class of all rank-one transformations would give automatically the positive answer to Sarnak's 
conjecture for almost every interval
exchange transformations (see~\cite{Ve} for rank-one property of almost every IET). In~\cite{Bo} the 3-IET case is considered.

\subsection{Outline of the paper}

We first develop in Section~\ref{sec:spectral_results} the purely spectral part of our argument. Spectral disjointness of powers of a unitary operator $U$ with continuous maximal spectral type is derived from the existence, in the weak closure of $\{U^k:\:k\in\Z\}$, of ``sufficiently many'' analytic functions of the operator $U$, with a suitable control of their coefficients (see in particular Corollary~\ref{clb2}).

Then we prove that for a large class of rank-one constructions, we can find appropriate weak limits in the weak closure
of $\{U_T^k:\:k\in\Z\}$ to get spectral disjointness (on the continuous part of the spectrum) of different positive powers. In particular, we consider so-called \emph{bounded} rank-one constructions, which simply means that all parameters $(p_n)$ and $(s_{n,i})$ are uniformly bounded (but here we impose no special restriction on the last column, so that the number of consecutive spacers is not necessarily bounded, as spacers coming from different steps of the construction can accumulate). More generally,  we deal with the following generalization of the class of bounded rank-one transformations: The cutting and stacking parameters $(p_n)$ and $(s_{n,i})$ being given, we say that the rank-one construction is \emph{recurrent} if we can find an increasing subsequence $(n_k)$ and  bi-infinite sequences $(\pi_m)_{m\in\Z}$ of positive integers and $(\eta_m)_{m\in\Z}=(\eta_{m,0},\ldots,\eta_{m,\pi_m-1})_{m\in\Z}$ such that, for all $m\in\Z$ and all $0\le j\le \pi_m-1$,
$$ p_{n_k+m} \tend{k}{\infty} \pi_m, \quad\mbox{and } s_{n_k+m,j} \tend{k}{\infty} \eta_{m,j}. $$
Such a subsequence $(n_k)$ will henceforth be called a \emph{stabilizing subsequence}. The rank-one construction is said to be \emph{bounded-recurrent}  if we can find a stabilizing subsequence for which the limit parameters 
$\eta_{n,j}$ are uniformly bounded (we do not require in this definition the boundedness of the parameters $(\pi_m)$).

To obtain appropriate weak limits in the bounded-recurrent case, we use the integral representation over odometer model introduced in~\cite{Ho-Me-Pa} for rank-one automorphisms, and a general theorem describing weak limits in this setting (Theorem~\ref{dflp}). This, together with a ``non-flatness" condition gives the spectral disjointness on the continuous part of the spectrum for different positive powers of a bounded-recurrent rank-one construction (Theorem~\ref{thm:spectral-disjointness}).

We also prove in Section~\ref{sec:unbounded} that our method for proving the spectral disjointness of the powers can also work in classical rank-one examples where $p_n\tend{n}{\infty}\infty$, which of course prohibits the existence of stabilizing subsequences. We consider here the family of rigid generalized Chacon's maps, and Katok's map.

In Section~\ref{sec:symbolic}, we are concerned with the application of our spectral disjointness results to Sarnak's conjecture in the symbolic model associated to a given rank-one construction. Here two kinds of difficulties appear: First, these symbolic models are not in general uniquely ergodic, and we will need an extra combinatorial argument (Proposition~\ref{prop:structure of names}) to deal with the other possible ergodic invariant measure (the Dirac mass on a fixed point). Theorem~\ref{thm:Sarnak for symbolic WM} shows that, at the cost of an extra hypothesis on the control of the parameters $(s_{n,i})$ (which is automatically fulfilled in the bounded case), we can prove Sarnak's conjecture in spite of the lack of unique ergodicity.  Second, since we only get spectral  disjointness on the continuous part of the spectrum, we have to take into account the possible eigenvalues. We treat the case of a finite cyclic group of eigenvalues, which can arise in bounded rank-
one constructions. We finally get Sarnak's conjecture for the symbolic model of any bounded rank-one contruction (Theorem~\ref{thm:Sarnak for bounded rank one}), provided this symbolic model is well defined (that is to say, provided the automorphism is not isomorphic to an odometer).

\subsection*{Acknowledgments}
The authors would like to thank J.-P. Thouvenot for discussions on the subject of the paper.

\section{Weak limits of unitary operators and disjointness of powers}
\label{sec:spectral_results}
Let $U$ be a unitary operator of a separable Hilbert space $H$. We denote by $\sigma_U$ the maximal spectral type of $U$, and we assume throughout that $\sigma_U$ is continuous.

Assume that for some increasing sequence $(h_n)_{n\geq1}$ of natural numbers we have
$$ U^{h_n}\tend{n}{\infty} \sum_{m=-\infty}^\infty a_m U^m$$
weakly in $H$,  for some complex numbers $a_m$, $m\in\Z$, such that
\begin{equation}\label{lb2}
\sum_{m=-\infty}^\infty|a_m|<+\infty.
\end{equation}
Then
\begin{equation}\label{lb1}
z^{h_n}\tend{n}{\infty} \sum_{m=-\infty}^\infty a_m z^m
\end{equation}
weakly in $L^2(\bs^1,\sigma_U)$,  and the formula
\begin{equation}\label{lb3}
\xi(z) \egdef \sum_{m=-\infty}^\infty a_mz^m
\end{equation}
defines a continuous function $\xi$ on the whole circle $\bs^1$. We can interpret the coefficients $a_m$, $m\in\Z$, as the Fourier coefficients of the function $\xi$.

\begin{Prop}\label{plb1}
If $a_m$, $m\in\Z$, in (\ref{lb1}) decreases to zero  exponentially fast as $|m|\to\infty$, then the RHS function defined $\sigma_U$-a.e. in \eqref{lb1} has a (unique) extension to the function $\xi$ (in~(\ref{lb3})) which is an analytic function on the circle $\bs^1$.
\end{Prop}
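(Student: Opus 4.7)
The plan is first to establish convergence of the bilateral series on an annulus containing the unit circle, and then to invoke the identity theorem for holomorphic functions, using the fact that the continuity of $\sigma_U$ forces its support to have accumulation points.

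More precisely, since the coefficients $a_m$ decrease exponentially as $|m|\to\infty$, there exist constants $C>0$ and $0<r<1$ such that $|a_m|\le C r^{|m|}$ for all $m\in\Z$. I would split the Laurent series \eqref{lb3} as $\xi(z)=\xi^+(z)+\xi^-(z)$ with $\xi^+(z)\egdef\sum_{m\ge 0}a_mz^m$ and $\xi^-(z)\egdef\sum_{m<0}a_mz^m$. The series defining $\xi^+$ converges normally on the open disk $\{|z|<1/r\}$, and the series defining $\xi^-$ converges normally on $\{|z|>r\}$. Consequently the Laurent series defines a holomorphic function on the annulus $\mathcal{A}\egdef\{z\in\C:r<|z|<1/r\}$, which is an open neighborhood of the unit circle $\bs^1$. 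This proves that $\xi$ is analytic on $\bs^1$ in the sense claimed.

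For the uniqueness of the extension, the key observation is that because $\sigma_U$ is a continuous (non-zero) Borel measure on $\bs^1$, its topological support $\Supp$ has no isolated point: an isolated point $z_0$ of $\Supp$ would admit a neighborhood $V$ with $V\cap\Supp=\{z_0\}$, hence $\sigma_U(V)=\sigma_U(\{z_0\})=0$, contradicting $z_0\in\Supp$. Therefore $\Supp$ is a non-empty perfect subset of $\bs^1$, and in particular every point of $\Supp$ is an accumulation point in $\bs^1$. Any holomorphic function on the connected open set $\mathcal{A}$ which coincides $\sigma_U$-a.e. on $\bs^1$ with the (continuous) function $\xi$ must therefore agree with $\xi$ on a set of points of $\mathcal{A}$ with an accumulation point in $\mathcal{A}$, and hence agree with $\xi$ on all of $\mathcal{A}$ by the identity theorem.

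The only subtle point I expect is making precise in what sense the $\sigma_U$-a.e.\ defined limit from \eqref{lb1} is extended: the hypothesis~\eqref{lb2} already guarantees that $\xi$ in \eqref{lb3} is continuous on $\bs^1$, so the a.e.\ identification with $\xi$ is immediate; what the exponential decay adds is simply the extra regularity that $\xi$ extends holomorphically to a full neighborhood of $\bs^1$, and the argument above is then entirely straightforward. No other real obstacle arises.
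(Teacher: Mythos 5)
Your argument is correct: the exponential bound $|a_m|\le Cr^{|m|}$ gives normal convergence of the Laurent series on the annulus $\{r<|z|<1/r\}$, and uniqueness follows from the identity theorem because a set of full measure for a non-zero \emph{continuous} measure is uncountable and hence accumulates somewhere on the compact circle $\bs^1$. The paper states Proposition~\ref{plb1} without proof, treating it as standard, and your proof is exactly the standard argument the authors have in mind; the only cosmetic remark is that your detour through the perfectness of the support of $\sigma_U$ is not needed, since uncountability of the a.e.-agreement set already supplies the accumulation point.
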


Let $p\ge2$ be an integer,  and let $\sigma$ be a positive, finite, Borel measure on $\bs^1$. We have the following observation:
\begin{equation}\label{o1}
\begin{array}{l}\mbox{If $\sigma\equiv\sigma_1+\ldots+\sigma_p$ then there exist $\widetilde{\sigma}_i\ll\sigma_i$, $i=1,\ldots,p$,}\\
\mbox{$\widetilde{\sigma}_i\perp\widetilde{\sigma}_j$ whenever $i\neq j$ and $\sigma\equiv\widetilde{\sigma}_1+\ldots+\widetilde{\sigma}_p$}.\end{array}\end{equation}
We denote by $\sigma^{(p)}$ the image of $\sigma$ via the map $z\mapsto z^p$. Then, we claim that there exists $\nu_p\ll\sigma$ such that
\begin{equation}\label{o2} (\nu_p)^{(p)}\equiv \sigma^{(p)},\end{equation}
and
\begin{equation}\label{o3} \mbox{the map $z\mapsto z^p$ is 1-1 $\nu_p$-a.e.}\end{equation}
To see the claim, first write $\sigma\equiv\sigma_1+\ldots+\sigma_p$ with $\sigma_i:=\sigma|_{[(i-1)/p,i/p)}$, $i=1,\ldots,p$ (by abuse of notation, we identify a subinterval of $[0,1)$ with the corresponding arc in $\bs^1$). Then, we have
$$
\sigma^{(p)}\equiv(\sigma_1)^{(p)}+\ldots+(\sigma_p)^{(p)}.$$
Apply~(\ref{o1}) to the latter decomposition to obtain mutually singular $\widetilde{(\sigma_i)^{(p)}}\ll (\sigma_i)^{(p)}$ for $i=0,\ldots,p-1$ satisfying the assertion of~(\ref{o1}). Then pull back the measures $\widetilde{(\sigma_i)^{(p)}}$ on $[(i-1)/p,i/p)$ via the inverse of $z\mapsto z^p$ which is 1-1 to obtain a measure $\eta_i$ and finally set $\nu_p\egdef\eta_1+\ldots+\eta_p$.

\begin{Lemma}\label{lemma:th}
Let $1\le p<q$ be two integers such that $U^p\not\perp_{sp} U^p$.
Then there exist $U$-invariant non-zero subspaces $H_p\subset H$ and $H_q\subset H$, cyclic for $U^p$ and $U^q$ respectively,  such that $U^p|_{H_p}$ is spectrally isomorphic to $U^q|_{H_q}$.
\end{Lemma}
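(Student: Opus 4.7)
The plan is to realize both $U^p|_{H_p}$ and $U^q|_{H_q}$ as the multiplication operator $M_z$ on $L^2(\tau)$ for a single measure $\tau$ on $\bs^1$ that captures the common part of $\sigma_U^{(p)}$ and $\sigma_U^{(q)}$. Since the hypothesis $U^p\not\spdj U^q$ means precisely that $\sigma_U^{(p)}$ and $\sigma_U^{(q)}$ are not mutually singular, their Lebesgue decomposition produces a non-zero measure $\tau$ with $\tau\ll\sigma_U^{(p)}$ and $\tau\ll\sigma_U^{(q)}$.

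The heart of the proof uses the observations (\ref{o2})--(\ref{o3}) recorded just before the lemma. They furnish a measure $\nu_p\ll\sigma_U$ such that $z\mapsto z^p$ is $1$-$1$ $\nu_p$-a.e.\ and $\nu_p^{(p)}\equiv\sigma_U^{(p)}$. I would then cut $\nu_p$ down by setting $\nu_p'\egdef\rho(z^p)\,\nu_p$, where $\rho\egdef d\tau/d\sigma_U^{(p)}$. A direct change of variables (made legitimate by the $1$-$1$ property) yields $(\nu_p')^{(p)}=\tau$, while $\nu_p'\ll\nu_p\ll\sigma_U$ and the map $z\mapsto z^p$ remains $1$-$1$ $\nu_p'$-a.e.

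Because $\nu_p'\ll\sigma_U$ and $\sigma_U$ is the maximal spectral type of $U$, the spectral theorem provides a vector $f_p\in H$ whose spectral measure is $\nu_p'$; let $H_p$ be the $U$-cyclic subspace it generates. Then $H_p$ is automatically $U$-invariant, and the action of $U$ on $H_p$ is unitarily equivalent to $M_z$ on $L^2(\nu_p')$. The $1$-$1$ property of $z\mapsto z^p$ on the support of $\nu_p'$ gives an isometry $L^2(\nu_p')\simeq L^2((\nu_p')^{(p)})=L^2(\tau)$ that intertwines $M_{z^p}$ with $M_z$; hence $U^p|_{H_p}$ is cyclic with spectral type $\tau$. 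Running the identical construction with $q$ in place of $p$ produces $H_q$ with $U^q|_{H_q}$ cyclic of spectral type $\tau$. Since two cyclic unitary operators with equivalent maximal spectral types are spectrally isomorphic, the lemma follows.

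The main obstacle is reconciling two competing constraints: $H_p$ must be $U$-invariant (not merely $U^p$-invariant), and at the same time $U^p|_{H_p}$ must be \emph{cyclic} with exactly the prescribed spectral type. Without the fibered $1$-$1$ property supplied by~(\ref{o3}), multiplication by $z^p$ on a $U$-cyclic subspace would generically fail to be cyclic, and the push-forward under $z\mapsto z^p$ would produce multiplicities spoiling the identification with $M_z$ on $L^2(\tau)$. This is precisely why the argument reduces to building the cut-down measures $\nu_p'$ and $\nu_q'$ rather than working directly with $\sigma_U^{(p)}$ and $\sigma_U^{(q)}$.
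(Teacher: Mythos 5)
Your proof is correct and follows essentially the same route as the paper: both rest on the construction of $\nu_p$ satisfying~(\ref{o2})--(\ref{o3}), which is exactly what makes $U^p$ act cyclically with the prescribed type on a $U$-cyclic subspace. The only (immaterial) difference is that you cut the measure down to the common part $\tau$ \emph{before} forming the cyclic subspace, whereas the paper first takes the full cyclic space $G_p$ of type $\nu_p$ and then extracts the $U^p$-invariant subspace of type $\nu$, using simplicity of $U^p|_{G_p}$ to see it is $U$-invariant; also note that~(\ref{o2}) only gives $(\nu_p')^{(p)}\equiv\tau$ rather than equality, which is all that is needed.
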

\begin{proof}
By our non-disjointness assumption, there exists $0\neq \nu\ll (\sigma_U)^{(p)}\wedge(\sigma_U)^{(q)}$. Apply the above claim to find $\nu_p\ll\sigma_U$ satisfying~(\ref{o2}) (with $\sigma_U$ in place of $\sigma$) and~(\ref{o3}). Let $G_p$ be a cyclic space for $U$ of spectral type $\nu_p$. Then this space is also $U^p$-invariant. By~(\ref{o2}), the spectral type of $U^p$ on $G_p$ is equal to $(\sigma_U)^{(p)}$, while by~(\ref{o3}), the spectrum of $U^p$ on $G_p$ is simple.

In this way we have found a $U$-invariant (cyclic) space which is also cyclic for $U^p$. Take $H_p\subset G_p$ to be the $U^p$-invariant (necessarily cyclic) subspace $H_p\subset G_p$ of type $\nu$. Since $U^p$ has simple spectrum on $G_p$, $H_p$ is also $U$-invariant. Do the same with $q$ in place of $p$, and find a $U^q$-cyclic space $H_q$ of spectral type $\nu$ which is also $U$-invariant. Clearly, $U^p|_{H_p}$ is isomorphic to $U^q|_{H_q}$.
\end{proof}

Assume now that for each $k\geq1$
\begin{equation}\label{lb6}
U^{kh_n}\tend{n}{\infty} F_k(U)\egdef\sum_{m=-\infty}^\infty a^{(k)}_mU^m
\end{equation}
weakly in $H$. Set $\vep_k\egdef e^{2\pi i/k}$.
Given $z=e^{2\pi i\theta}\in\bs^1$ with $\theta\in[0,1)$, we also write $r^{(k)}_j(z)\egdef e^{2\pi i((\theta+j)/k)}$, $j=0,\ldots,k-1$.

Note that for $1\le p<q$, $0\le j\le p-1$, and for each $z\in\bs^1$,
\begin{equation}\label{lb6a}
\left(r^{(pq)}_{0}(z)\right)^q=r^{(p)}_j(z)\vep_p^{-j}.\end{equation}
We obtain a symmetric formula for $q$.

\begin{Prop}\label{llb1}
Fix $1\leq p<q$  and assume that $F_p$ and $F_q$ are analytic on $\bs^1$ (that is, the coefficients $a^{(p)}_m,a^{(q)}_m$, $m\in\Z$, decrease exponentially fast). If, for each  $j=0,\ldots,p-1$, $k=0,\ldots,q-1$, the analytic functions
$$ z\mapsto F_p(\vep_p^j z^{q})\quad \mbox{and}\quad z\mapsto F_q(\vep_q^k z^{p})$$
are different,  then $U^p$ and $U^q$ are spectrally disjoint. If $p$ and $q$ are coprime then we only need to check that $F_p(z^q)\neq F_q(z^p)$.
\end{Prop}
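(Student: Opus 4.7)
The plan is to argue by contradiction. Assume $U^p\not\spdj U^q$. By Lemma~\ref{lemma:th}, there exist nonzero $U$-invariant subspaces $H_p,H_q\subset H$, cyclic for $U^p$ and $U^q$ respectively with common spectral type $\nu$ (continuous, since $\sigma_U$ is), together with a unitary $W\colon H_p\to H_q$ satisfying $W\,U^p|_{H_p}=U^q|_{H_q}\,W$. Iterating gives $W\,U^{ph_n}|_{H_p}=U^{qh_n}|_{H_q}\,W$ for all $n$, and restricting the weak convergence~\eqref{lb6} to $H_p$ and $H_q$ lets us pass to the limit:
\begin{equation*}
W\,F_p(U)|_{H_p}=F_q(U)|_{H_q}\,W.
\end{equation*}

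The next step is to translate this operator identity into a pointwise equality via spectral representations. Identify $H_p\cong L^2(\bs^1,\tilde\nu_p)$ with $U\leftrightarrow M_z$ (so $U^p\leftrightarrow M_{z^p}$), where the push-forward of $\tilde\nu_p$ by $z\mapsto z^p$ is $\nu$ and this map is one-to-one on $\mathrm{supp}\,\tilde\nu_p$; let $\psi_p$ be its Borel inverse. Then $\Phi_p\colon f\mapsto f\circ\psi_p$ realizes a unitary $L^2(\tilde\nu_p)\to L^2(\nu)$ sending $F_p(U)|_{H_p}$ to multiplication by $F_p\circ\psi_p$. Proceeding analogously for $H_q$, the operator $\Phi_q W\Phi_p^{-1}$ commutes with $M_w$ on $L^2(\nu)$ (since both $U^p|_{H_p}$ and $U^q|_{H_q}$ become $M_w$), hence is itself multiplication by some unimodular $\phi\in L^\infty(\nu)$. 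Because multiplication operators commute, the intertwining identity collapses to
\begin{equation*}
F_p\bigl(\psi_p(w)\bigr)=F_q\bigl(\psi_q(w)\bigr)\quad\text{for $\nu$-a.e.\ }w.
\end{equation*}

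From here, the contradiction follows by pigeonhole and analytic continuation. Fix measurable sections $s_p,s_q$ of the $p$-th and $q$-th power maps, so that $\psi_p(w)=s_p(w)\vep_p^{J(w)}$ and $\psi_q(w)=s_q(w)\vep_q^{K(w)}$ for measurable $J,K$. On a positive-$\nu$-measure subset, $(J,K)$ takes a constant value $(j_0,k_0)$. Pulling back via $z^{pq}=w$, the values $s_p(w)$ and $s_q(w)$ differ from $z^q$ and $z^p$ respectively by a $p$-th and $q$-th root of unity; a second pigeonhole yields fixed $(j',k')\in\{0,\dots,p-1\}\times\{0,\dots,q-1\}$ and an infinite set of $z\in\bs^1$ on which
\begin{equation*}
F_p(\vep_p^{j'}z^q)=F_q(\vep_q^{k'}z^p).
\end{equation*}
Since both sides are analytic on $\bs^1$ and $\nu$ is continuous, the agreement set has an accumulation point in $\bs^1$, forcing the two analytic functions to coincide identically---contradicting the hypothesis.

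For the coprime case, set $G_{j,k}(z)\egdef F_p(\vep_p^jz^q)-F_q(\vep_q^kz^p)$. Using $\vep_{pq}^{\ell q}=\vep_p^\ell$ and $\vep_{pq}^{\ell p}=\vep_q^\ell$ one checks $G_{j,k}(\vep_{pq}^\ell z)=G_{j+\ell,\,k+\ell}(z)$. When $\gcd(p,q)=1$, the Chinese Remainder Theorem makes $\ell\mapsto(\ell\bmod p,\,\ell\bmod q)$ surjective onto $\Z/p\times\Z/q$, so every $G_{j,k}$ differs from $G_{0,0}$ only by a rotation of~$z$; hence $G_{0,0}\not\equiv 0$ forces all $G_{j,k}\not\equiv 0$. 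The main technical hurdle is the spectral-representation step: tracking carefully how the abstract unitary $W$ becomes a multiplication operator on $L^2(\nu)$, leaving a clean pointwise identity between $F_p$ and $F_q$ composed with measurable root functions, from which the combinatorial reduction to the hypothesis is then routine.
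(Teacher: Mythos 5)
Your proof is correct and follows essentially the same route as the paper's: reduce via Lemma~\ref{lemma:th} to a common spectral model in which $U$ acts by multiplication by measurable branches of the $p$-th and $q$-th roots, derive the a.e.\ identity $F_p(\psi_p(w))=F_q(\psi_q(w))$, and conclude by pigeonholing the roots of unity on a positive-measure set and invoking analytic continuation (your intertwiner $W$ becoming a multiplication operator is just a more explicit version of the paper's direct comparison of the two weak limits of $z^{h_n}$ in $L^2(\bs^1,\sigma')$). Your handling of the coprime case, via the rotation symmetry $G_{j,k}(\vep_{pq}^{\ell}z)=G_{j+\ell,k+\ell}(z)$ and the Chinese remainder theorem, is a clean formal repackaging of the paper's argument, which instead picks the branch $\Theta=r^{(pq)}_m$ with $m$ given by CRT so that no extra roots of unity appear.
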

\begin{proof}
Suppose that $U^p\not\perp_{sp} U^q$. Then, by Lemma~\ref{lemma:th}, we can find a $U^p$-invariant cyclic subspace $H_p\subset H$ and a $U^q$-invariant cyclic subspace $H_q\subset H$ such that $(H_p,U^p)$ and $(H_q,U^q)$ are isomorphic, and both $H_p$ and $H_q$ are $U$-invariant. Let $\sigma'$ be the common maximal spectral type of $U^p|_{H_p}$ and $U^q|_{H_q}$.  By passing to the spectral model, the action of $U^p$ on $H_p$ and the action of $U^q$ on $H_q$ are both represented in $L^2(\bs^1,\sigma')$ by multiplication by $z$. The action of $U$ on $H_p$ (respectively $H_q$) is represented in $L^2(\bs^1,\sigma')$ by multiplication by some function $\va_p$ (respectively $\va_q$) of modulus one. Moreover, in view of~(\ref{lb6}) in $H_p$ (with $k=p$), $z^{h_n}$ converges weakly in $L^2(\bs^1,\sigma')$ to $F_p(\va_p(z))$, while by~(\ref{lb6}) in $H_q$ (with $k=q$), it also converges weakly in $L^2(\bs^1,\sigma')$ to $F_q(\va_q(z))$. Therefore, we have
\begin{equation}\label{lb10}
F_p(\va_p(z))=F_q(\va_q(z))\;\;\mbox{for $\sigma'$-a.e}\;\;z\in\bs^1.\end{equation}
Moreover
$$ \va_p(z)^p=z=\va_q(z)^q\quad(\sigma'\mbox{-a.e.}).$$
It follows that
$$\va_p(z)\in\{r^{(p)}_0(z), \ldots, r^{(p)}_{p-1}(z)\}\quad(\sigma'\mbox{-a.e.})$$
and
$$\va_q(z)\in\{r^{(q)}_0(z), \ldots, r^{(q)}_{q-1}(z)\}\quad(\sigma'\mbox{-a.e.})$$
 Thus, there exist $j\in\{0,\ldots,p-1\}$, $k\in\{0,\ldots,q-1\}$ and $A\subset\bs^1$, $\sigma'(A)>0$, such that
\begin{equation}\label{lb12}
\va_p(z)=r^{(p)}_j(z),\quad \va_q(z)=r^{(q)}_k(z)\quad\mbox{for each}\;\;z\in A.
\end{equation}
Set $\Theta:A\to\bs^1$,
$\Theta(z)\egdef r^{(pq)}_0(z)$.
In view of~(\ref{lb6a}) and~(\ref{lb12})
$$
\Theta(z)^q\vep^{j}_p=\va_p(z),\quad\Theta(z)^p\vep_{q}^k=\va_q(z),\quad z\in A.$$
Now, defining $\nu$ as the image of the measure $\sigma'|_{A}$ via the map
$A\ni z\mapsto \Theta(z)\in\bs^1$,  we obtain by~\eqref{lb10} that $F_p(\vep^j_pw^q)=F_q(\vep_q^kw^p)$ for $\nu$-a.e.\ $w\in\bs^1$. Since $F_p,F_q$ are analytic, and $\nu$ is a non-zero continuous measure, the two analytic functions $$ z\mapsto F_p(\vep_p^j z^{q})\quad \mbox{and}\quad z\mapsto F_q(\vep_q^k z^{p})$$
coincide. This proves the first part of the lemma.

If additionally, $p$ and $q$ are coprime, there is a unique $m\in\{0,\ldots,pq-1\}$ such that for the map $\Theta:A\to\bs^1$ defined by
$\Theta(z)\egdef r^{(pq)}_m(z)$
we have
$$
\Theta(z)^q=r^{(p)}_j(z),\quad\Theta(z)^p=r^{(q)}_k(z),\quad z\in A.$$
(Indeed, $m$ satisfies $m=j\mod p$ and $m=k\mod q$, so the assertion follows by the Chinese remainder theorem.) We conclude, as in the general case, that $F_p(w^q)=F_q(w^p)$.
\end{proof}

Set $\Supp(F_k)\egdef \{n\in\Z:\: a^{(k)}_n\neq0\}$ and notice that
$$
\Supp\left(F_p(\vep^j_p(\,\cdot\,)^q)\right)=q\Supp(F_p).$$

\begin{Cor}\label{clb2}
Assuming that $F_p$ and $F_q$ are analytic, if $q\Supp(F_p)\neq p\Supp(F_q)$, then $U^p\perp_{sp} U^q$. In particular, if $1\in \bigl(\Supp(F_p)-\Supp(F_p)\bigr)\cap\bigl(\Supp(F_q)-\Supp(F_q)\bigr)$, then $U^p\perp_{sp} U^q$.
\end{Cor}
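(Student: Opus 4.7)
The plan is to reduce the corollary immediately to Proposition~\ref{llb1} by reading off the Fourier supports of the analytic functions in its hypothesis. Since $F_p(w)=\sum_m a_m^{(p)}w^m$ on~$\bs^1$, substitution gives
$$ F_p(\vep_p^j z^q)=\sum_{m\in\Z} a_m^{(p)}\vep_p^{jm}\, z^{qm}, $$
so its set of nonzero Fourier coefficients is exactly $q\Supp(F_p)$, and in particular does \emph{not} depend on the choice of $j\in\{0,\ldots,p-1\}$. The same computation shows that the Fourier support of $z\mapsto F_q(\vep_q^k z^p)$ equals $p\Supp(F_q)$ for every $k$. Hence, if $q\Supp(F_p)\neq p\Supp(F_q)$, one of the two sets contains an integer that the other does not, giving for every pair $(j,k)$ a coordinate on which the two Laurent series differ; the two analytic functions are therefore distinct, and Proposition~\ref{llb1} yields $U^p\spdj U^q$.

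For the \emph{in particular} part, I would argue by contradiction. Assume $q\Supp(F_p)=p\Supp(F_q)$. Using $1\in\Supp(F_p)-\Supp(F_p)$, pick $n,n+1\in\Supp(F_p)$; then $qn$ and $qn+q$ both lie in $p\Supp(F_q)$, so they are divisible by~$p$, which forces $p\mid q$. Symmetrically, from $1\in\Supp(F_q)-\Supp(F_q)$ one extracts $m,m+1\in\Supp(F_q)$, observes that $pm$ and $pm+p$ lie in $q\Supp(F_p)\subset q\Z$, and concludes $q\mid p$. Together these divisibilities force $p=q$, contradicting the standing hypothesis $p<q$ of Proposition~\ref{llb1}. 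Therefore $q\Supp(F_p)\neq p\Supp(F_q)$, and the first part of the corollary applies.

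There is no real obstacle: the entire analytic content is already encapsulated in Proposition~\ref{llb1}, and the ingredient needed here is the elementary observation, recorded just before the statement, that $\Supp\bigl(F_p(\vep_p^j(\,\cdot\,)^q)\bigr)=q\Supp(F_p)$. The corollary is thus a convenient combinatorial reformulation of the non-coincidence condition, expressed in terms of the integer sets $\Supp(F_p)$ and $\Supp(F_q)$, which is the form that will be used when exhibiting explicit weak limits in the rank-one constructions treated later in the paper.
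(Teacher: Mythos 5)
Your proof is correct and follows exactly the route the paper intends: the corollary is stated immediately after the identity $\Supp\bigl(F_p(\vep^j_p(\,\cdot\,)^q)\bigr)=q\Supp(F_p)$, and the paper leaves the deduction (distinct supports $\Rightarrow$ distinct analytic functions for every $j,k$ $\Rightarrow$ Proposition~\ref{llb1} applies) implicit. Your divisibility argument for the \emph{in particular} clause, using consecutive elements of each support to force $p\mid q$ and $q\mid p$ and hence $p=q$, is the standard and intended one.
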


\begin{Remark}\label{remS}\em In order to prove  Sarnak's conjecture for some automorphisms, we are interested in spectral disjointness $U^p\perp_{sp} U^q$ only for prime powers. In particular, $p$ and $q$ are coprime. Under the latter assumption, the equality $q\Supp(F_p)=p\Supp(F_q)$ (see Proposition~\ref{llb1}) yields $\Supp(F_p)\subset p\Z$ and $\Supp(F_q)\subset q\Z$. Moreover, $F_p\bigl((\,\cdot\,)^q\bigr)=F_q\bigl((\,\cdot\,)^p\bigr)$ implies
$$
a^{(p)}_{ps}=a^{(q)}_{qs}\;\;\mbox{for each}\;\;s\in\Z.$$
It follows that, for $p\neq q$ prime numbers and assuming the analyticity of $F_p$ and $F_q$, $U^p\not\perp_{sp} U^q$ implies the existence of a sequence $(b_s)_{s\in\Z}$ such that $F_p$ and $F_q$ are of the form
\begin{equation}\label{lbsarnak}
F_p(z) = \sum_{s=-\infty}^\infty b_s z^{ps},\quad F_q(z) = \sum_{s=-\infty}^\infty b_s z^{qs}.\end{equation}
\end{Remark}

\section{A weak convergence theorem for integral automorphisms}

Let $S$ be an ergodic automorphism of a standard Borel probability space $\ycn$. Let $f:Y\to\Z_{+}^\ast=\{1,2,\ldots\}$ be integrable. Set
$$
Y^f\egdef\{(y,i):\:y\in Y, 0\leq i < f(y)\},$$
with the natural product structure, the Borel $\sigma$-algebra $\cb^f$ and the probability measure $\nu^f$ defined by
$$ \nu^f(A)\egdef \dfrac{1}{\int_Y f\,d\nu} \sum_{i\ge 0} \nu\bigl(\{y\in Y:\ (y,i)\in A\}\bigr). $$
We consider on $Y^f$ the vertical action $S^f:\ (y,i)\mapsto(y,i+1)$, where we identify $(y,f(y))$ with $(Sy,0)$.
Then $S^f$ is an ergodic automorphism of $(Y^f, \cb^f, \nu^f)$ called \emph{integral automorphism over $S$}.

Let $g:\ Y\to \Z$. For each $q\ge1$, we set
$$ g^{(q,S)}\egdef g+g\circ S+\cdots+ g\circ S^{q-1}. $$
(If it is clear which underlying transformation $S$ we refer to, we will only write $g^{(q)}$ instead of $g^{(q,S)}$.) Let $\cp(\Z)$ be the space of all probability distributions on $\Z$. We denote by $g_\ast\in \cp(\Z)$ the probability distribution of $g$ (that is, the image of $\nu$ under the map $g$).

Now we also assume that $(Y,d)$ is a compact metric space, and that $f$ is square integrable. Suppose that $S$ is uniformly rigid along an increasing sequence $(q_n)$, \textit{i.e.}\ $d(S^{q_n}y,y)\to0$ uniformly, when $n\to\infty$.
Then by Theorem~6 in~\cite{Fr-Le} (with $C_n=Y$) combined with Lemma~3, Lemma~31 and the proof of Proposition~32 in~\cite{De-Fr-Le-Pa} we obtain the following result.

\begin{Th}\label{dflp} Under the above assumptions on $S$ and $f$, assume that there exist $M>0$, $P\in\cp(\Z)$, and a sequence $(h_n)\subset\Z$ such that $\|f^{(q_n)}-h_n\|_{L^2}\leq M$, $n\geq1$, and
\begin{equation}\label{dflp1}
\left(f^{(q_n)}-h_n\right)_\ast\tend{n}{\infty} P\;\;\mbox{weakly in}\;\;\cp(\Z).\end{equation}
Then
\begin{equation}\label{dflp2}
(S^f)^{-h_n}\tend{n}{\infty}\sum_{k\in\Z}P(\{k\})(S^f)^k\end{equation}
weakly in the space of Markov operators.
\end{Th}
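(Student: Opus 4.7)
The plan is to exploit the following key identity for the integral automorphism. Setting $F_n\egdef f^{(q_n)}-h_n$, the basic tower relation $(S^f)^{f^{(q_n)}(y)}(y,i)=(S^{q_n}y,i)$ gives, for $\nu^f$-almost every $(z,i)\in Y^f$ lying in $\{i<f(S^{-q_n}z)\}$,
$$(S^f)^{-h_n}(z,i)=(S^f)^{F_n(S^{-q_n}z)}\bigl(S^{-q_n}z,\,i\bigr).$$
Uniform rigidity of $S$ along $(q_n)$ ensures that this set has $\nu^f$-mass tending to $1$, since $\|f-f\circ S^{-q_n}\|_{L^1}\to 0$. The identity factorises $(S^f)^{-h_n}$ into a geometric motion $S^{-q_n}$ on the base, which is close to the identity by uniform rigidity, composed with a vertical shift by the random integer $F_n(S^{-q_n}z)$ whose distribution equals $(F_n)_\ast$ and converges weakly to $P$ by~\eqref{dflp1}.

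First I would reduce the statement to convergence of matrix coefficients $\langle U_{S^f}^{-h_n}\phi,\psi\rangle\to\sum_k P(\{k\})\langle U_{S^f}^k\phi,\psi\rangle$ tested against a dense family in $L^2(Y^f)$, namely cylinder functions of the form $(y,i)\mapsto\alpha(y)\,\ind{\{i=i_0\}}$ with $\alpha\in C(Y)$ and $i_0\in\N$. Chebyshev's inequality combined with the $L^2$-bound $\|F_n\|_{L^2}\le M$ yields the uniform tightness estimate $\nu(\{|F_n|>K\})\le M^2/K^2$, which together with the tightness of $P$ lets us truncate the integer summation to $|k|\le K$ with an error $O(K^{-2})$ uniformly in $n$.

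Inserting the identity and slicing the matrix coefficient according to the integer value of $F_n(S^{-q_n}z)$, I would write
$$\langle U_{S^f}^{-h_n}\phi,\psi\rangle=\sum_{k\in\Z}\int_{\{F_n\circ S^{-q_n}=k\}}\phi\bigl((S^f)^k(S^{-q_n}z,i_0)\bigr)\,\overline{\psi(z,i_0)}\,d\nu^f(z,i_0)+o(1),$$
where the $o(1)$ absorbs the contribution on the exceptional set where the identity fails. For each fixed $k$, uniform rigidity together with continuity of $\alpha$ gives $\phi\bigl((S^f)^k(S^{-q_n}z,i_0)\bigr)\to\phi\bigl((S^f)^k(z,i_0)\bigr)$ for $\nu^f$-almost every $(z,i_0)$ (the integer offsets of $(S^f)^k$ determined by the tower structure being locally constant on a full-measure subset), hence in $L^1(d\nu^f)$ by bounded convergence. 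Since moreover $\nu(\{F_n\circ S^{-q_n}=k\})=\nu(\{F_n=k\})\to P(\{k\})$ by~\eqref{dflp1}, summing over $|k|\le K$, passing to the limit in $n$, and then letting $K\to\infty$ delivers $\sum_k P(\{k\})\langle U_{S^f}^k\phi,\psi\rangle$.

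The main obstacle is the coupling of two conceptually distinct mechanisms: the metric rigidity $S^{-q_n}z\approx z$ on the base is uniform and geometric but insensitive to $n$, whereas the convergence $(F_n)_\ast\to P$ is purely distributional, and $F_n$ itself is wildly discontinuous as a function of $y$. The decoupling device is precisely to freeze the value of $F_n$ on each level set $\{F_n=k\}$ before invoking rigidity, and then to commute a finite sum over $|k|\le K$ with the limit in~$n$; the tightness coming from the $L^2$-bound $M$ is what validates the truncation uniformly in~$n$. This is the style of argument carried out in Theorem~6 of~\cite{Fr-Le} combined with Lemma~3, Lemma~31, and the proof of Proposition~32 in~\cite{De-Fr-Le-Pa}, and Theorem~\ref{dflp} follows by direct adaptation of those results.
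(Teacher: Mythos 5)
The paper does not supply an internal proof of Theorem~\ref{dflp}: it is obtained by direct citation of Theorem~6 of~\cite{Fr-Le} together with Lemma~3, Lemma~31 and the proof of Proposition~32 of~\cite{De-Fr-Le-Pa}, so your attempt has to be measured against the argument of those references. Your sketch correctly isolates the two structural ingredients: the exact identity $(S^f)^{-h_n}(z,i)=(S^f)^{F_n(S^{-q_n}z)}(S^{-q_n}z,i)$, valid off a set of $\nu^f$-measure tending to $0$, and the truncation of the vertical shift to $|k|\le K$ via the Chebyshev bound coming from $\|F_n\|_{L^2}\le M$. These are indeed the starting points of the cited proof.

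However, the decisive step is not justified as written. After slicing along $\{F_n\circ S^{-q_n}=k\}$ and replacing $\phi\circ (S^f)^k\circ(S^{-q_n}\times\mathrm{id})$ by $\phi\circ(S^f)^k$, you are left with $\int_{A_{n,k}}(\phi\circ(S^f)^k)\,\overline{\psi}\,d\nu^f$, where $A_{n,k}$ is (essentially) the level set $\{F_n\circ S^{-q_n}=k\}$, and you pass to the limit in $n$ using only $\nu(\{F_n=k\})\to P(\{k\})$. Convergence of the measures of the sets $A_{n,k}$ does not imply $\int_{A_{n,k}}G\,d\nu^f\to P(\{k\})\int G\,d\nu^f$ for a fixed integrable $G$: the level sets of $F_n$ could remain correlated with $G$ (take $G=\ind{A}$ with $A_{n,k}\equiv A$ fixed of the right measure, and the two sides differ). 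This asymptotic independence is precisely the content of the cited results, and it is obtained there not by a direct computation of matrix coefficients but through the joining formalism: any weak-$\ast$ limit of the graph joinings of $(S^f)^{-h_n}$ is a self-joining of $S^f$ (hence $S^f\times S^f$-invariant); the identity plus rigidity force it to be concentrated on $\bigcup_{k}\mathrm{graph}\bigl((S^f)^k\bigr)$; and the ergodicity of $S^f$ (hence of each graph joining $\mu_{(S^f)^k}$) then forces the decomposition $\sum_k c_k\,\mu_{(S^f)^k}$, with the weights $c_k$ identified as $P(\{k\})$ from the measures of the level sets. Without invoking invariance and ergodicity at this point, the argument does not close. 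A secondary issue: the claimed a.e.\ convergence $\phi\bigl((S^f)^k(S^{-q_n}z,i_0)\bigr)\to\phi\bigl((S^f)^k(z,i_0)\bigr)$ cannot rest on the offsets of $(S^f)^k$ being ``locally constant'', since $f$ is only measurable and $(S^f)^k$ is therefore not continuous; one gets $L^2$-convergence instead by approximating $\phi\circ(S^f)^k$ in $L^2$ and using that $U_S^{q_n}\to\mathrm{Id}$ strongly, a fixable but not automatic repair.
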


\begin{Remark}\label{dflprem}
\em
Note that when $\|(f^{(q_n)}(\cdot)-h_n\|_{L^2}\leq M$, we also have
$$
\|(f^{(jq_n)}-jh_n\|_{L^2}\leq jM\;\;\mbox{ for all}\;\;j\geq1$$
(indeed, we have $f^{(jq_n)}=f^{(q_n)}+f^{(q_n)}\circ S^{q_n}+\ldots+f^{(q_n)}\circ S^{(j-1)q_n}$). Therefore, by passing to a subsequence if necessary, the sequence
$\left( (f^{(jq_n)}-jh_n)_\ast\right)_{n\ge1}$ converges weakly to some probability measure $P_j\in\mathcal{P}(\Z)$ and the assertion of Theorem~\ref{dflp} holds with $h_n$ replaced by $jh_n$ and $P$ replaced by $P_j$. In general, there is no connection between $P$ and $P_j$.
\end{Remark}

\section{Rank-one transformations as integral automorphisms and weak convergences} 
\label{sec:integral automorphisms}
For the representation of a rank-one transformation as an integral automorphism over an odometer, we follow~\cite{Na} (see also \textit{e.g.} \cite{Ho-Me-Pa}, \cite{Pr-Ry}).

Assume that $p_j\geq2$ are integers for $j\geq1$. Set $q_n\egdef p_1 p_2\cdots  p_n$, $n\geq1$. Let $(Y,S)$ be the corresponding $(q_n)$-odometer. That is, we define $Y$ as the compact metric space $ Y\egdef\Pi_{n=1}^\infty\{0,\ldots,p_n-1\}$. We endow it with the topological group structure by adding the coordinates modulo~$p_n$ and transferring the carry to the right. We also consider on $Y$ the Haar probability measure $\nu$, under which all coordinates are uniformly distributed (in their respective integer intervals) and independent. This measure is preserved by the odometer transformation $S$, defined by $S(y)\egdef y+\hat{1}$ with $\hat{1}\egdef (1,0,0,\ldots)$.  
It is a rank-one transformation itself, where no spacers are added. 
We have a refining sequence of towers $\mathcal{D}_n=\{D^{(n)}_0,\ldots,D^{(n)}_{q_n-1}\}$, $n\geq 1$, fulfilling the whole space and tending to the partition into points, defined by $D^{(n)}_0\egdef\{y\in Y:\: y_1=\ldots=y_n=0\}$ and $D^{(n)}_{i}\egdef S^{i}D^{(n)}_0$  for $i=0,\ldots,q_n-1$\footnote{\label{stopka1}Yet, another partition of $Y$ at stage $n\geq1$ can be considered: It is given by $n$-columns $C^{(n)}_i:=\bigcup_{r=0}^{q_{n}-1}S^rD^{(n)}_{iq_n}$, 
 $i=0,1,\ldots,p_{n+1}-1$; notice that $y\in C^{(n)}_i$ if and only if $y_{n+1}=i$.}.
Note that $\mathcal{D}_n$ is the partition generated by the $n$ first coordinates of $y\in Y$, and that
$$
S^{q_n}D^{(n)}_{i}=D^{(n)}_{i}\quad (0\le i\le q_n-1).$$
Moreover,
\begin{equation}
  \label{eq:top}
D^{(n)}_{q_n-1}=D^{(n+1)}_{q_n-1}\cup D^{(n+1)}_{2q_n-1}\cup\ldots\cup D^{(n+1)}_{p_{n+1}q_n-1}
\end{equation}
and
\begin{align}
\label{ooo}
S^{q_n}D^{(n+1)}_{jq_n-1}&=D^{(n+1)}_{(j+1)q_n-1},\quad j=1,\ldots,p_{n+1}-1,\\ S^{q_n}D^{(n+1)}_{p_{n+1}q_n-1}&=D^{(n+1)}_{q_n-1}.\notag
\end{align}

Assume that $f:Y\to\Z^+_\ast$ is an $L^1$-function. We call it of {\em Morse type} if it can be represented as
\begin{equation}\label{moa}f=1+\sum_{n=1}^\infty  s_n\end{equation}
where
$s_n:Y\to \Z^+=\{0,1,2,\ldots\}$ is $\mathcal{D}_n$-measurable, $n\geq1$, and
\begin{equation}\label{mo2}
\mbox{supp\,$s_n\subset D^{(n-1)}_{q_{n-1}-1}$ for $n\geq2$}.
\end{equation}
By~\eqref{eq:top}, the function $s_n$ is completely defined by the values it takes on $D_{q_{n-1}-1}^{(n)}$,  $D_{2q_{n-1}-1}^{(n)},\ldots,$  $D_{p_nq_{n-1}-1}^{(n)}$, which we respectively denote by $s_{n,0}$, $s_{n,1},\ldots$, $s_{n,{p_n-1}}$.

Set $h_1\egdef 1$, and for $n\geq1$ set
\begin{equation}\label{mo3}
h_{n+1}\egdef p_{n}h_n+\sum_{j=0}^{p_n-1}s_{n,j}.\end{equation}

When $f$ is a Morse type function, it is not hard to see that the integral automorphism $S^f$ is rank-one. Indeed, the sequence of towers for $S^f$ is obtained consecutively by taking
the towers with base $D^{(n)}_0\times\{0\}$. These towers have heights given by $h_n$ above.

As a matter of fact, each rank-one transformation is of the form $S^f$ for some odometer $S$ and $f$ of Morse type~\cite{Ho-Me-Pa},~\cite{Na}: $p_n$ are given by the number of columns in the construction, while the functions $s_n$ are given by the number of spacers over a column, the sequence $(h_n)$ of heights of the towers is obtained from the recursive formula~(\ref{mo3}) relating the heights of the towers with the parameters giving the number of divisions and the spacers.

\begin{Lemma}\label{mol1}
For each $n\geq1$ the function $s_n^{(q_n)}(\cdot)$ is constant on $Y$ and
\begin{equation}\label{mo4}
s_n^{(q_n)}(y)=\sum_{j=0}^{p_n-1}s_{n,j}.
\end{equation}
Moreover,
\begin{equation}\label{mo5}
s_n^{(jq_n)}(y)=js_n^{(q_n)}\;\;\mbox{for each}\;\;j\geq1.\end{equation}
We also have
\begin{equation}\label{mo6}
h_{n+1}=p_{n}h_n+s_n^{(q_n)}.\end{equation}\end{Lemma}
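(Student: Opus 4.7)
The proof is essentially bookkeeping about which levels of the partition $\mathcal{D}_n$ contribute to the Birkhoff sum of $s_n$ over $q_n$ steps. My plan would be to exploit three facts that are essentially already in the text: the support constraint \eqref{mo2}, the fact that $S^{q_n}D^{(n)}_i = D^{(n)}_i$ for every $0\le i\le q_n-1$, and the refinement identity \eqref{eq:top}.

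For \eqref{mo4}, I would first observe that since $S^{q_n}$ stabilizes each level of $\mathcal{D}_n$ setwise and these levels partition $Y$, the finite orbit $\{y, Sy, \ldots, S^{q_n-1}y\}$ visits every one of the $q_n$ levels of $\mathcal{D}_n$ exactly once, regardless of the starting point $y$. Writing
\[
s_n^{(q_n)}(y) = \sum_{k=0}^{q_n-1} s_n(S^k y),
\]
and using that $s_n$ is $\mathcal{D}_n$-measurable and supported on $D^{(n-1)}_{q_{n-1}-1}$, this sum picks up exactly the values of $s_n$ on those levels of $\mathcal{D}_n$ that lie inside the support. Applying \eqref{eq:top} with $n-1$ in place of $n$, these levels are precisely $D^{(n)}_{jq_{n-1}-1}$ for $j=1,\ldots,p_n$, on which $s_n$ takes the values $s_{n,0}, s_{n,1},\ldots,s_{n,p_n-1}$. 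Hence $s_n^{(q_n)}(y)=\sum_{j=0}^{p_n-1} s_{n,j}$, which is independent of $y$.

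For \eqref{mo5}, I would use the standard cocycle decomposition
\[
s_n^{(jq_n)} = \sum_{i=0}^{j-1} s_n^{(q_n)}\circ S^{iq_n},
\]
and note that since $s_n^{(q_n)}$ is constant by \eqref{mo4}, every term in this sum equals $s_n^{(q_n)}$, yielding $js_n^{(q_n)}$. Finally, \eqref{mo6} is immediate: the defining recursion \eqref{mo3} reads $h_{n+1}=p_n h_n+\sum_{j=0}^{p_n-1}s_{n,j}$, and the second summand is exactly $s_n^{(q_n)}$ by \eqref{mo4}.

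There is no serious obstacle here; the only subtlety worth being explicit about is the index shift between $D^{(n-1)}_{q_{n-1}-1}$ (the support of $s_n$) and its finer decomposition into levels of $\mathcal{D}_n$, which is where \eqref{eq:top} enters with a shift of index. Everything else is a direct unpacking of definitions.
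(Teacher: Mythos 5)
Your proof is correct and follows essentially the same route as the paper: the key observation that the orbit $\{y,Sy,\ldots,S^{q_n-1}y\}$ meets each set $D^{(n)}_{iq_{n-1}-1}$, $i=1,\ldots,p_n$, exactly once, the cocycle decomposition for \eqref{mo5}, and the recursion \eqref{mo3} for \eqref{mo6}. You merely spell out in more detail (via the partition property of $\mathcal{D}_n$ and the index-shifted use of \eqref{eq:top}) why the orbit picks up each value $s_{n,j}$ exactly once, which the paper states without elaboration.
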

\begin{proof}
The orbit $\{y,Sy,\ldots,S^{q_n-1}y\}$ of each point $y\in Y$ meets every set $D^{(n)}_{iq_{n-1}-1}$ exactly once for each $i=1,\ldots,p_n$. Hence~(\ref{mo4}) follows. Also~(\ref{mo5}) follows as $s_n^{(jq_n)}(y)=\sum_{m=0}^{j-1}s_n^{(q_n)}(S^{mq_n}y)$. Finally,~(\ref{mo6}) follows from~(\ref{mo4}) and~(\ref{mo3}).
\end{proof}

\begin{Lemma}\label{mol2} We have
$$(1+s_1+s_2+\ldots+s_n)^{(q_n)}(y)=h_{n+1}$$ for each $y\in Y$.
\end{Lemma}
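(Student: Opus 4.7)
The plan is to argue by induction on $n$, using the previous lemma together with the recurrence~\eqref{mo3}.

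For the base case $n=1$, since $q_1=p_1$ and $h_1=1$, the constant function $1$ satisfies $1^{(q_1)}(y)=p_1=p_1h_1$, and Lemma~\ref{mol1} gives $s_1^{(q_1)}(y)=\sum_{j=0}^{p_1-1}s_{1,j}$. Adding these and applying~\eqref{mo3} with $n=1$ yields $(1+s_1)^{(q_1)}(y)=p_1h_1+\sum_{j=0}^{p_1-1}s_{1,j}=h_2$.

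For the inductive step, assume the identity holds at stage $n$, and write
$$
(1+s_1+\ldots+s_{n+1})^{(q_{n+1})}(y)=(1+s_1+\ldots+s_n)^{(q_{n+1})}(y)+s_{n+1}^{(q_{n+1})}(y).
$$
Since $q_{n+1}=p_{n+1}q_n$, the first term decomposes as
$$
\sum_{k=0}^{p_{n+1}-1}(1+s_1+\ldots+s_n)^{(q_n)}(S^{kq_n}y),
$$
and by the inductive hypothesis each summand equals $h_{n+1}$, so this contribution is $p_{n+1}h_{n+1}$. For the remaining term, Lemma~\ref{mol1} applied at stage $n+1$ gives $s_{n+1}^{(q_{n+1})}(y)=\sum_{j=0}^{p_{n+1}-1}s_{n+1,j}$. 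Combining the two and invoking~\eqref{mo3} at level $n+1$ produces $p_{n+1}h_{n+1}+\sum_{j=0}^{p_{n+1}-1}s_{n+1,j}=h_{n+2}$, closing the induction.

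There is essentially no obstacle here: the key point is simply that $q_n$-orbits of $S$ partition into $p_{n+1}$ consecutive pieces of length $q_n$ under the action on $q_{n+1}$-orbits, which is exactly the geometric content of the cutting and stacking. Lemma~\ref{mol1} handles the constancy and value of $s_n^{(q_n)}$, and the recurrence~\eqref{mo3} is precisely what relates the two consecutive heights, so the calculation telescopes cleanly.
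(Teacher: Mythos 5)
Your proof is correct and follows essentially the same route as the paper: induction on $n$, with the base case using $q_1=p_1$ and the recurrence~\eqref{mo3}, and the inductive step splitting the $q_{n+1}$-sum into $p_{n+1}$ blocks of length $q_n$ and invoking Lemma~\ref{mol1} for the $s_{n+1}$ term. No gaps.
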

\begin{proof} Indeed, by Lemma~\ref{mol1} and the fact that $q_1=p_1$, $$(1+s_1)^{(q_1)}=q_1+s_1^{(q_1)}=p_1+\sum_{j=0}^{p_1-1}s_{1,j}=h_2.$$
Assume now the lemma has been proved for some $n\ge 1$. Then, by Lemma~\ref{mol1}, it follows that
\begin{align*}
&(1+s_1+s_2+\ldots+s_n+s_{n+1})^{(q_{n+1})}(y)\\
&=(1+s_1+s_2+\ldots+s_n)^{(p_{n+1}q_n)}(y)+s_{n+1}^{(q_{n+1})}(y)\\
&=\sum_{m=0}^{p_{n+1}-1}(1+s_1+s_2+\ldots+s_n)^{(q_n)}(S^{mq_{n}}y)+
\sum_{j=0}^{p_{n+1}-1}s_{n+1,j}\\
&=p_{n+1}h_{n+1}+\sum_{j=0}^{p_{n+1}-1}s_{n+1,j}=h_{n+2}.
\end{align*}
\end{proof}

Set $f_{n+1}\egdef \sum_{m\ge n+1}s_{m}$. Then supp\,$f_{n+1}\subset D^{(n)}_{q_n-1}$. Define also $g_{n+1}(y)\egdef f_{n+1}(S^iy)$, where $0\leq i<q_n$ is unique to satisfy $S^iy\in D^{(n)}_{q_n-1}$, that is, we spread the values of $f_{n+1}$ along the columns $C^{(n)}_i$, see footnote~\ref{stopka1}: for $z\in D^{(n)}_{iq_n}$ we set
$$
g_{n+1}(z)=g_{n+1}(Sz)=\ldots=g_{n+1}(S^{q_n-1}z)=f_{n+1}(S^{q_n-1}z).$$

\begin{Lemma}\label{mol3}For each $j\geq1$ we have
$$
f_{n+1}^{(jq_n)}=g_{n+1}^{(j,S^{q_n})}.$$
\end{Lemma}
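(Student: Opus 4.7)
The plan is to exploit two structural facts: first, the support of $f_{n+1}$ is contained in the single level $D^{(n)}_{q_n-1}$ of the $n$-th tower; second, $S^{q_n}$ preserves each level $D^{(n)}_r$ (as recorded right after the definition of the odometer). With these in hand, the identity reduces to matching the non-zero terms of the two sums.

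Fix $y\in Y$ and let $r\in\{0,\ldots,q_n-1\}$ be the unique index with $y\in D^{(n)}_r$. Since $S^{q_n}D^{(n)}_r=D^{(n)}_r$, one has $S^{mq_n}y\in D^{(n)}_r$ for every $m\geq 0$, and therefore $S^k y\in D^{(n)}_{q_n-1}$ if and only if $k\equiv q_n-1-r\pmod{q_n}$. Consequently, among $k\in\{0,1,\ldots,jq_n-1\}$, the only indices for which $f_{n+1}(S^k y)$ can be non-zero are $k_m\egdef (q_n-1-r)+mq_n$, $m=0,\ldots,j-1$, and thus
$$ f_{n+1}^{(jq_n)}(y) \;=\; \sum_{m=0}^{j-1} f_{n+1}\bigl(S^{\,q_n-1-r}\,S^{mq_n}y\bigr). $$

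Now, for each $m$, the point $S^{mq_n}y$ still lies in $D^{(n)}_r$, so the unique $i\in\{0,\ldots,q_n-1\}$ sending it into $D^{(n)}_{q_n-1}$ is $i=q_n-1-r$. By the very definition of $g_{n+1}$ this gives $f_{n+1}(S^{\,q_n-1-r}S^{mq_n}y)=g_{n+1}(S^{mq_n}y)$, so summing over $m$ produces exactly $g_{n+1}^{(j,S^{q_n})}(y)$.

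I do not expect any genuine obstacle here: the argument is essentially bookkeeping about which iterates of $S$ visit the top level of $\mathcal{D}_n$. The only subtlety is the use of $S^{q_n}D^{(n)}_r=D^{(n)}_r$, without which the claim that the orbit of length $jq_n$ starting at $y$ hits the support of $f_{n+1}$ exactly $j$ times would not hold.
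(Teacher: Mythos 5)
Your proof is correct and follows essentially the same route as the paper's: both identify the indices $k$ in $\{0,\dots,jq_n-1\}$ for which $S^ky$ hits the support $D^{(n)}_{q_n-1}$ (exactly one per block of length $q_n$, at a position fixed modulo $q_n$ because $S^{q_n}$ preserves each level), and then match these terms with $g_{n+1}(S^{mq_n}y)$ via the definition of $g_{n+1}$. Your version merely makes the offset $i=q_n-1-r$ explicit, which the paper leaves implicit.
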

\begin{proof}Since by~(\ref{mo2}),
 supp\,$f_{n+1}\subset D^{(n)}_{q_n-1}$, for each $y\in Y$ the orbit $\{y,\ldots, S^{q_n-1}y\}$ meets the support exactly once, so the result holds for $j=1$.

 In the general case, let $0\leq i<q_n$ be such that $S^iy\in D^{(n)}_{q_n-1}$. Then the only points in the orbit $\{y,\ldots, S^{jq_n-1}y\}$ that meet $D^{(n)}_{q_n-1}$ are of the form $S^{i+kq_n}y$ and, by~(\ref{ooo}), we have
 $$
 \sum_{k=0}^{j-1}f_{n+1}(S^{i+kq_n}y)=\sum_{r=0}^{j-1}g_{n+1}(S^{rq_n}y)$$
 which completes the proof.
\end{proof}

Using the above and the proof~\footnote{One can apply Theorem~\ref{dflp} directly to all rank-one transformations for which in their constructions  there are no spacers put over the the last column, and the number of spacers is bounded; indeed, in this case the functions $g_n$ are commonly bounded.}  of Theorem~\ref{dflp} we obtain the following result.
\begin{Prop}\label{mop1} For each $j\geq1$ and $n\geq1$ we have
$$
f^{(jq_n)}-jh_{n+1}=g^{(j,S^{q_n})}_{n+1}.$$
If, moreover,
$\left(f^{(jq_{n_k})}-jh_{n_k+1}\right)_\ast\tend{k}{\infty} P_j$ weakly in $\mathcal{P}(\Z)$ then $P_j(\{\ldots,-2,-1\})=0$,
$$
(S^f)^{-jh_{n_k}}\tend{k}{\infty} \sum_{r=0}^\infty P_j(\{r\})(S^f)^r, $$
and the function $\sum_{r=0}^\infty P_j(\{r\})z^r$ is analytic in $\bd$. If the sequence $P_j(\{r\})$, $r\geq0$ decreases exponentially fast, then $\sum_{r=0}^\infty P_j(\{r\})z^r$ is analytic in $\ov{\bd}$.\end{Prop}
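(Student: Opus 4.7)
The plan is to dispatch the three separate statements of the proposition in sequence: the pointwise identity $f^{(jq_n)}-jh_{n+1}=g_{n+1}^{(j,S^{q_n})}$, the weak operator convergence, and the analyticity of the limit. I expect all three to follow cleanly from the machinery already set up in Lemmas~\ref{mol1}--\ref{mol3} and Theorem~\ref{dflp}.

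First, I would decompose $f=\bigl(1+\sum_{m=1}^{n}s_m\bigr)+f_{n+1}$ and compute each ergodic sum of length $jq_n$ separately. The function $1+\sum_{m=1}^{n}s_m$ is $\mathcal{D}_n$-measurable and $S^{q_n}$ fixes every cell $D^{(n)}_i$, so along $\{y,Sy,\ldots,S^{jq_n-1}y\}$ the sum breaks into $j$ identical blocks, each contributing $h_{n+1}$ by Lemma~\ref{mol2}, for a total of $jh_{n+1}$. The tail $f_{n+1}^{(jq_n)}$ equals $g_{n+1}^{(j,S^{q_n})}$ by Lemma~\ref{mol3}, and subtracting yields the first identity. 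Pointwise non-negativity of $f_{n+1}$ then transfers to $g_{n+1}$ and hence to $f^{(jq_{n_k})}-jh_{n_k+1}\geq 0$, so any weak subsequential limit of its distribution is carried by $\Z_+$, which gives $P_j(\{\ldots,-2,-1\})=0$.

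The weak operator convergence would then follow by applying Theorem~\ref{dflp} to the rigidity sequence $(jq_{n_k})$ with centering constants $jh_{n_k+1}$ and limit $P_j$; the $L^2$-control needed at step $jq_{n_k}$ is supplied by Remark~\ref{dflprem}, applied along a subsequence if necessary. This yields the stated convergence $(S^f)^{-jh_{n_k+1}}\to\sum_{r\geq 0}P_j(\{r\})(S^f)^r$, with the sum restricted to $r\geq 0$ by the previous step. For the analyticity, $\xi_j(z)\egdef\sum_{r\geq 0}P_j(\{r\})z^r$ has non-negative coefficients with $\sum_{r}P_j(\{r\})\leq 1$, so its radius of convergence is at least $1$ and $\xi_j$ is analytic on $\bd$; exponential decay of $P_j(\{r\})$ as $r\to\infty$ strictly enlarges this radius and extends $\xi_j$ analytically to $\ov{\bd}$.

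The step I expect to require the most care is the application of Theorem~\ref{dflp} in the second paragraph, specifically propagating the $L^2$-bound from the $q_{n_k}$-level to the $jq_{n_k}$-level. The identity from Step~1 is precisely what makes this manageable, since it recasts $f^{(jq_{n_k})}-jh_{n_k+1}$ as a single sum of $j$ translates of $g_{n_k+1}$ under $S^{q_{n_k}}$, allowing Remark~\ref{dflprem}'s bound to be invoked directly rather than re-derived.
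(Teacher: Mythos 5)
Your first step (the identity $f^{(jq_n)}-jh_{n+1}=g_{n+1}^{(j,S^{q_n})}$ obtained by splitting $f=(1+s_1+\cdots+s_n)+f_{n+1}$ and invoking Lemmas~\ref{mol2} and~\ref{mol3}) is exactly the paper's argument, and your handling of non-negativity and of the two analyticity claims at the end is correct (the paper does not even bother to spell the latter out).

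There is, however, a genuine gap in your second paragraph, where you apply Theorem~\ref{dflp}. That theorem requires a uniform $L^2$ bound $\|f^{(q_{n_k})}-h_{n_k+1}\|_{L^2}=\|g_{n_k+1}\|_{L^2}\le M$, and Proposition~\ref{mop1} does not assume this: its only hypothesis is weak convergence of the distributions, which gives no control on second moments (a sequence of non-negative integer-valued functions can converge in distribution while its $L^2$ norms blow up). Remark~\ref{dflprem} cannot ``supply'' this control, as you claim: it only propagates an $L^2$ bound from level $q_{n_k}$ to level $jq_{n_k}$ \emph{assuming} the bound at level $q_{n_k}$ already holds, and passing to a subsequence does not create a bound where there is none. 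The paper's proof therefore splits into two cases. When $\|g_{n_k+1}\|_{L^2}$ is bounded, Theorem~\ref{dflp} applies directly. When it is not, the paper truncates: for $\vep>0$ it chooses $M$ with $P_1([0,M]\cap\Z)>1-\vep$, sets $C_k=\{y:\ g_{n_k+1}(y)\le M\}$, observes that $g_{n_k+1}$ is constant along the columns $C^{(n_k)}_r$ so that $\nu(C_k\triangle SC_k)\to0$, applies Theorem~6 of~\cite{Fr-Le} with these sets (and the arguments of~\cite{De-Fr-Le-Pa}) to obtain a limit of the form $(1-\vep)\sum_{r\ge0}P'_\vep(\{r\})(S^f)^r+\vep J$ with $J$ a Markov operator, and then lets $\vep\to0$. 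Some such truncation step is needed to complete your argument; as written, the application of Theorem~\ref{dflp} is not justified.
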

\begin{proof} For the first part of the proposition, notice that by Lemmas~\ref{mol2} and~\ref{mol3}, we have
$$
f^{(jq_n)}=(1+s_1+\ldots+s_n)^{(jq_n)}+f_{n+1}^{(jq_n)}=jh_{n+1}+g^{(j,S^{q_n})}_{n+1}.$$
Moreover, since $g_{n+1}$ takes only non-negative values, $P_j(\{\ldots,-2,-1\})=0$.

For the second part, we consider only $j=1$ (the proof is the same for all $j\geq1$). If the sequence $\|f^{(q_{n_k})}-h_{n_k+1}\|_{L^2}=\|g_{n_k+1}\|_{L^2}$, $k\geq1$, is bounded in $L^2\ycn$, the result follows directly from Theorem~\ref{dflp}. If not, we take $\vep>0$ and find $M>0$ so that $P_1([0,M]\cap \Z)>1-\vep$, that is
$$
\nu(C_k)>1-\vep,\;\;\mbox{where}\;\;C_k=\{y\in Y:\:g_{n_k+1}\leq M\}$$
for all $k$ large enough, say $k\geq K$. Notice that the function $g_{n_k+1}$ is ``almost'' $S$-invariant, because for each $k\geq1$, for all  of the points $y$ except those belonging to the top level $D^{(n_k)}_{q_{n_k}-1}$, $y$ and $Sy$ are in the same column in $C^{(n_k)}_r$ (see footnote~\ref{stopka1}), whence, by its definition,  $g_{n_k+1}(y)=g_{n_{k}+1}(Sy)$. It follows that $\nu(C_k\triangle SC_k)\to0$ when $k\to\infty$. Since now the functions $(f^{(q_{n_k})}-h_{n_k+1})|_{C_k}$, $k\geq K$, are commonly bounded, it follows that (by passing to a further subsequence if necessary) by Theorem~6 in~\cite{Fr-Le} (and using the arguments from~\cite{De-Fr-Le-Pa})
$$
(S^f)^{-h_{n_k+1}}\to (1-\vep)\sum_{r=0}^\infty P'_{\vep}(\{r\})(S^f)^r+\vep J,$$
where $P'_{\vep}\in\mathcal{P}(\Z)$ and $J$ is a Markov operator. Moreover, $P'_{\vep}\to P$, when $\vep\to0$. By passing to a further subsequence if necessary, we obtain the result.
\end{proof}

\section{Recurrent rank-one constructions and spectral disjointness of powers}

We now assume the existence of a stabilizing subsequence for our parameters $(p_n)$ and $(s_{n,j})$. Recall from Section~\ref{sec:Intro} that it means we can find bi-infinite sequences $(\pi_m)_{m\in\Z}$ and $(\eta_m)_{m\in\Z}=(\eta_{m,0},\ldots,\eta_{m,\pi_m-1})$, and a subsequence $(n_k)_{k\ge1}$ such that, for all $m\in\Z$ and all $1\le j\le \pi_m$,
$$ p_{n_k+m} \tend{k}{\infty} \pi_m, \quad s_{n_k+m,j} \tend{k}{\infty} \eta_{m,j}. $$
We also interpret $\eta_m$ as a function mapping $j\in\{0,\ldots,\pi_m-1\}$ to $\eta_{m,j}$, and $s_n$ as a function mapping $j\in\{0,\ldots,p_n-1\}$ to $s_{n,j}$.

Let $j\ge 1$ be a fixed integer. We are interested in the limit distribution, as $k\to\infty$, of
$$ f^{(jq_{n_k})}-jh_{n_k+1} = 
g_{n_k+1}^{(j,S^{q_{n_k}})} $$
(cf.\ Proposition~\ref{mop1}).

Observe (see footnote~\ref{stopka1}) that for each $y=(y_1,y_2,\ldots)\in Y$,
\begin{equation}
\label{eq:formula_for_g}
  g_{n_k+1}(y) = \sum_{m=1}^t s_{n_k+m}(y_{n_k+m}),
\end{equation}
where $t=t(n_k,y)$ is the smallest positive integer such that $y_{n_k+t}<p_{n_k+t}-1$. Note also that, for each $r\ge 1$ the probability that $t(n_k,y)>r$ is uniformly bounded by~$2^{-r}$.

Let $\ov Y\egdef\prod_{m\ge 1}\{0,\ldots,\pi_m-1\}$, and define $\gamma:\ \ov Y\to\Z_+$ by
\begin{equation}
 \label{eq:def_gamma}
\gamma (\ov y) \egdef \sum_{m=1}^{\ov t} \eta_{m}(\ov y_m),
\end{equation}
where $\ov t=\ov t(\ov y)$ is the smallest positive integer such that $\ov y_{\ov t}<\pi_{\ov t}-1$. We can also view $\eta_m$ as a function defined on $\ov Y$, supported on the set of $\ov y$ with $\ov y_1=\pi_1-1,\ldots,\ov y_{m-1}=\pi_{m-1}-1$, defined by 
$$ \eta_m(\pi_1-1,\ldots,\pi_{m-1}-1,\ov y_m,\ov y_{m+1},\ldots)\egdef\eta_{m}(\ov y_m), $$
so that (\ref{eq:def_gamma}) becomes
\begin{equation}
 \label{eq:def_gamma_bis}
\gamma (\ov y) = \sum_{m\ge1} \eta_{m}(\ov y).
\end{equation}

We also introduce the odometer transformation $\ov S:\ \ov Y\to \ov Y$. For all $j\ge 1$, we denote by $P_j$ the distribution of 
$\gamma^{(j)}=\gamma+\gamma\circ \ov S+\cdots + \gamma \circ \ov S^{j-1}$, when $\ov Y$ is endowed with its Haar measure $\ov \nu$.

\begin{Lemma}
  \label{lemma:convergence_to_P_j}
  For each $j\ge 1$, the distribution of 
  $$ f^{(jq_{n_k})}-jh_{n_k+1} = g_{n_k+1}^{(j,S^{q_{n_k}})} $$
  converges weakly in $\cp (\Z)$ to $P_j$ as $k\to\infty$.
\end{Lemma}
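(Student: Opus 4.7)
The plan is to reduce the weak convergence in $\cp(\Z)$ to the pointwise convergence of probability mass functions (via Scheffé's theorem), then to truncate both random variables to functions of a finite initial block of odometer coordinates, and finally to exploit that these parameters stabilize identically for $k$ large. First I would observe that $g_{n_k+1}$ depends only on the tail $\til y\egdef(y_{n_k+m})_{m\ge 1}$, which under $\nu$ is Haar-distributed on $\prod_{m\ge 1}\{0,\ldots,p_{n_k+m}-1\}$, and that $S^{q_{n_k}}$ acts on this tail as the odometer $\til S_k$ of the product. Writing
$$\til g_k(\til y)\egdef\sum_{m=1}^{\til t}s_{n_k+m,\til y_m},$$
with $\til t$ the smallest positive integer such that $\til y_{\til t}<p_{n_k+\til t}-1$, one has $g_{n_k+1}^{(j,S^{q_{n_k}})}(y)=\sum_{i=0}^{j-1}\til g_k(\til S_k^i\til y)$, so it suffices to show that $\nu\bigl(g_{n_k+1}^{(j,S^{q_{n_k}})}=a\bigr)\to \ov\nu\bigl(\gamma^{(j)}=a\bigr)$ for every $a\in\Z_+$.

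Next I would fix $R$ large and regard $(\til y_1,\ldots,\til y_R)$ as a mixed-radix integer $N_k\in\{0,\ldots,Q_k-1\}$, where $Q_k\egdef\prod_{m=1}^R p_{n_k+m}\ge 2^R$. Setting $D_{k,R}\egdef\{N_k\le Q_k-j-1\}$, the $j$ iterates $N_k,N_k+1,\ldots,N_k+j-1$ all stay strictly below $Q_k-1$, so on $D_{k,R}$: (i) no carry propagates past position $R$ during $j$ applications of $\til S_k$, and (ii) $\til t(\til S_k^i\til y)\le R$ for all $0\le i\le j-1$. Consequently $\nu(D_{k,R}^c)=j/Q_k\le j\cdot 2^{-R}$, uniformly in $k$, and on $D_{k,R}$ the quantity $g_{n_k+1}^{(j,S^{q_{n_k}})}(y)$ equals an explicit function $\Phi_k(\til y_1,\ldots,\til y_R)$ determined solely by the parameters $\{p_{n_k+m},s_{n_k+m,\cdot}:m\le R\}$. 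The analogous event $\ov D_R\subset\ov Y$, defined with $\{\pi_m,\eta_{m,\cdot}\}$ in place of those parameters, satisfies $\ov\nu(\ov D_R^c)\le j\cdot 2^{-R}$, and $\gamma^{(j)}=\Phi_\infty(\ov y_1,\ldots,\ov y_R)$ on $\ov D_R$.

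Finally, since $p_{n_k+m}$ and $s_{n_k+m,j'}$ are integer sequences converging to integer limits, for each $R$ there exists $K(R)$ such that $p_{n_k+m}=\pi_m$ and $s_{n_k+m,j'}=\eta_{m,j'}$ for all $k\ge K(R)$, $m\le R$, $0\le j'<\pi_m$. For these $k$, the marginal law of $(\til y_1,\ldots,\til y_R)$ under $\nu$ coincides with that of $(\ov y_1,\ldots,\ov y_R)$ under $\ov\nu$, the events $D_{k,R}$ and $\ov D_R$ are defined by the same condition on these coordinates, and $\Phi_k=\Phi_\infty$. Hence
$$\nu\bigl(g_{n_k+1}^{(j,S^{q_{n_k}})}=a,\,D_{k,R}\bigr)=\ov\nu\bigl(\gamma^{(j)}=a,\,\ov D_R\bigr),$$
and combined with the complement bounds this yields $\bigl|\nu(g_{n_k+1}^{(j,S^{q_{n_k}})}=a)-\ov\nu(\gamma^{(j)}=a)\bigr|\le 2j\cdot 2^{-R}$ for $k\ge K(R)$. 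Letting $R\to\infty$ completes the proof. The main obstacle will be making the truncation simultaneously uniform in $k$ and robust under the $j$-fold iteration of $\til S_k$; the simple choice $D_{k,R}=\{N_k\le Q_k-j-1\}$ succeeds because $p_{n_k+m}\ge 2$ forces $Q_k\ge 2^R$, so the $j$ unavoidable ``bad'' mixed-radix values occupy an exponentially small fraction of state space uniformly in $k$.
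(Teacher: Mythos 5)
Your proposal is correct and follows essentially the same route as the paper's proof: truncate to the first $R$ (resp.\ $r$) coordinates of the tail, use the stabilization of the parameters for $k$ large to identify the truncated distributions of $g_{n_k+1}^{(j,S^{q_{n_k}})}$ and $\gamma^{(j)}$ exactly, and bound the exceptional set by $j\,2^{-R}$ uniformly in $k$. Your mixed-radix set $\{N_k\le Q_k-j-1\}$ is just a slightly more explicit packaging of the paper's good set $\bigcap_{\ell=0}^{j-1}S^{-\ell q_{n_k}}B(n_k,r)$, and the Scheff\'e framing adds nothing essential.
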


\begin{proof}
  Fix a large integer $r$. Then choose $k$ large enough, so that for each $0\le m\le r+1$, $p_{n_k+m}$ and $s_{n_k+m}$ coincide respectively with  $\pi_m$ and $\eta_m$. Comparing \eqref{eq:formula_for_g} and  \eqref{eq:def_gamma}, we see that the distribution of $g_{n_k+1}$ on the set 
  $$ B(n_k,r)\egdef\left\{y\in Y:\ t(n_k,y)\le r \right\}$$ 
  coincide with  the distribution of $\gamma$ on the set 
  $$\ov B(r)\egdef\left\{ \ov y\in\ov Y:\ \ov t(\ov y)\le r \right\}.$$ 
  Moreover, these two sets have the same measure which is at least $1-2^{-r}$. This proves the lemma for $j=1$. For the general case,  note that $S^{q_{n_k}}$ acts on $y=(y_1,y_2,\ldots)\in Y$ by adding $1$ on the coordinate $y_{n_k+1}$ and transferring the carry to the right.  Then the distribution of $g_{n_k+1}^{(j,S^{q_{n_k}})}$ on $\bigcap_{\ell=0}^{j-1}S^{-\ell q_{n_k}}B(n_k,r)$ coincides with the distribution of $\gamma$ on $\bigcap_{\ell=0}^{j-1}\ov S^{-\ell}\ov B(r)$. Since the common measure of these two sets is at least $1-j2^{-r}$, which can be made arbitrarily close to 1 by fixing $r$ large enough, this proves the lemma.
\end{proof}

For each $m\in\Z$, we introduce the following sets:
$$ \cs_m \egdef \eta_m\bigl(\{0,\ldots,\pi_m-2\}\bigr) \cup \Bigl( \eta_{m,\pi_m-1}+\eta_{m+1}\bigl(\{0,\ldots,\pi_{m+1}-2\}\bigr)\Bigr)$$
and
$$ \ce_m \egdef \cs_m-\cs_m. $$

The set $\cs_m$ can be interpreted as the set of values of $\sum_{r\ge m}\eta_r$ over the set of all $\ov y\in \ov Y$ satisfying $\ov y_1=\pi_1-1,\ldots,\ov y_{m-1}=\pi_{m-1}-1,\ov y_{m+1}<\pi_{m+1}-1$. As the following lemmas show, the set $\ce_m$ of differences between elements of $\cs_m$ is very useful to describe some properties of the distribution $P_j$.

\begin{Lemma}
\label{lemma:distribution_of_Pj}
  Let $j\ge 1$ and $d\ge 1$ be fixed integers. Assume that there exists $m$ with $2^{m-1}>j$ such that $d\in \ce_m$. Then there exist $\alpha,\beta\in\Z$ with $\alpha-\beta=d$, $P_j(\alpha)>0$ and $P_j(\beta)>0$.
\end{Lemma}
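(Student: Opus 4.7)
The plan is to construct, for each $s \in \cs_m$, a positive-measure set of points $\ov y \in \ov Y$ such that $\gamma^{(j)}(\ov y) = s + C$, where $C$ is a constant depending only on $j$ and the stabilized parameters. Writing $d = s - s'$ with $s, s' \in \cs_m$ will then yield $\alpha \egdef s + C$ and $\beta \egdef s' + C$ satisfying $\alpha - \beta = d$ and $P_j(\alpha), P_j(\beta) > 0$.

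I would first introduce the positive-measure set
$$ A \egdef \{\ov y \in \ov Y : \ov y_1 = \pi_1 - 1,\ \ldots,\ \ov y_{m-1} = \pi_{m-1} - 1,\ \ov y_{m+1} < \pi_{m+1} - 1\}, $$
and observe that, directly from the definition of $\cs_m$, the restriction of $\gamma$ to $A$ takes each value in $\cs_m$ on a subset of positive $\ov\nu$-measure: values of the first type $\eta_{m,j_0}$ are reached by choosing $\ov y_m = j_0 < \pi_m - 1$, while values of the second type $\eta_{m,\pi_m-1} + \eta_{m+1,j_0'}$ are reached by choosing $\ov y_m = \pi_m - 1$ and $\ov y_{m+1} = j_0' < \pi_{m+1}-1$.

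The core of the argument is to show that for every $\ov y \in A$, the quantity $\gamma^{(j)}(\ov y) - \gamma(\ov y)$ depends neither on $\ov y$ nor on the coordinates $\ov y_m, \ov y_{m+1}, \ldots$. This relies on careful bookkeeping of carries in the odometer $\ov S$: starting from $\ov y \in A$, whose first $m-1$ coordinates are maximal, a single application of $\ov S$ triggers a carry cascade leaving the first $m-1$ coordinates of $\ov S\ov y$ all equal to $0$. For $i = 1, \ldots, j-1$, the first $m-1$ coordinates of $\ov S^i\ov y$ then form the base-$(\pi_1, \ldots, \pi_{m-1})$ representation of $i - 1$; the hypothesis $2^{m-1} > j$ together with $q_{m-1} \ge 2^{m-1}$ guarantees $i - 1 \le j - 2 < q_{m-1} - 1$, so this representation is never equal to $(\pi_1 - 1, \ldots, \pi_{m-1}-1)$. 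Consequently $\ov t(\ov S^i\ov y) < m$, and $\gamma(\ov S^i \ov y)$ depends only on $i$, not on the remaining coordinates of $\ov y$. Setting $C \egdef \sum_{i=1}^{j-1} \gamma(\ov S^i \ov y)$ thus gives $\gamma^{(j)}(\ov y) = \gamma(\ov y) + C$ throughout $A$, completing the construction.

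The main, and essentially only, obstacle is the carry-propagation analysis above: one must ensure that none of the iterates $\ov S\ov y, \ldots, \ov S^{j-1}\ov y$ revisits the critical configuration in which all first $m-1$ coordinates are at their maximum, for this is precisely the configuration in which $\gamma$ would pick up contributions from $\ov y_m, \ov y_{m+1}, \ldots$ and thereby spoil the constancy of $C$ on $A$. The numerical condition $2^{m-1} > j$ is exactly what rules this out.
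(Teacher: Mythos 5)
Your proof is correct and follows essentially the same route as the paper: both arguments hinge on the observation that, since $2^{m-1}>j$, the iterates $\ov S^{\ell}\ov y$ for $1\le \ell\le j-1$ never return to the configuration where the first $m-1$ coordinates are all maximal, so those terms of $\gamma^{(j)}$ contribute a quantity independent of the coordinates from index $m$ on, and positivity of measure comes from freeing the coordinates beyond $m+1$. (One cosmetic slip: on your set $A$ the function $\gamma$ takes the values of $\cs_m$ shifted by the constant $\sum_{r=1}^{m-1}\eta_r(\pi_r-1)$ rather than the values of $\cs_m$ themselves, but this constant is absorbed into $C$ and does not affect the difference $\alpha-\beta=d$.)
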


\begin{proof}
By the definition of $\cs_m$, there exist $\ov y=(\ov y_1,\ov y_2,\ldots)$ and $\ov z=(\ov z_1,\ov z_2,\ldots)$ in $\ov Y$ with $\ov y_r = \ov z_r = \pi_r-1$ ($1\le r \le m-1$), $\ov y_{m+1} < \pi_{m+1}-1$ and  $\ov z_{m+1} < \pi_{m+1}-1$, such that
$$ \sum_{r\ge m}\eta_r(\ov z) - \sum_{r\ge m}\eta_r(\ov y)=d. $$
Since $j < 2^{m-1} \le \pi_1\cdots\pi_{m-1}$, for any $\ell\in\{1,\ldots,j-1\}$ we have
$$\sum_{r\ge m} \eta_r (\ov S^\ell \ov y) =  \sum_{r\ge m} \eta_r (\ov S^\ell \ov z)  =  0. $$
Moreover, observe that the first $(m-1)$ coordinates of $\ov S^{\ell}\ov y$ coincide with the first $(m-1)$ coordinates of $\ov S^{\ell}\ov z$. Hence, for any $0\le \ell\le j-1$,
$$ \sum_{1\le r\le m-1} \eta_r (\ov S^\ell \ov y) =  \sum_{1\le r\le m-1} \eta_r (\ov S^\ell \ov z).$$
It follows that
\begin{equation}
\label{eq:diff_d}
 \sum_{0\le\ell\le j-1} \left( \gamma(\ov S^\ell \ov z) - \gamma(\ov S^\ell \ov y)\right) = d.
\end{equation}
Finally, note that changing the coordinates $\ov y_r$ and $\ov z_r$ for $r>m+1$ does not affect the above equality, hence there is a positive measure set of $\ov y\in \ov Y$ and a positive measure set of $\ov z \in \ov Y$ for which (\ref{eq:diff_d}) holds.
\end{proof}

\begin{Lemma}
\label{lemma:multiple_of_d}
  Let $d\ge1$ be such that, for any $m\in\Z$, $\ce_m\subset d\Z$. Let $j\ge 1$ and $\alpha,\beta\in\Z$ with $P_j(\alpha)>0$ and $P_j(\beta)>0$. Then $\alpha-\beta$ is a multiple of $d$.
\end{Lemma}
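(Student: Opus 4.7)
The strategy is to upgrade the hypothesis $\ce_m\subset d\Z$ to the much stronger statement that $\gamma$ itself is constant modulo $d$ on a full-measure subset of $\ov Y$. Once this is established, $\gamma^{(j)}=\sum_{\ell=0}^{j-1}\gamma\circ\ov S^\ell$ is almost surely congruent modulo $d$ to $j$ times that common residue, so $P_j$ is supported inside a single residue class of $\Z/d\Z$, which immediately yields $\alpha-\beta\in d\Z$.

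The key technical step is a telescoping relation between successive levels. For each $m\geq 1$, let $c_m\in\Z/d\Z$ denote the common residue shared by all elements of $\cs_m$; this is well defined since $\cs_m-\cs_m=\ce_m\subset d\Z$. The first-kind elements of $\cs_m$ are the values $\eta_{m,j}$ for $0\leq j\leq\pi_m-2$, while the second-kind elements are $\eta_{m,\pi_m-1}+\eta_{m+1,j}$ for $0\leq j\leq\pi_{m+1}-2$. Now $\eta_{m+1,j}$ with $j<\pi_{m+1}-1$ is itself a first-kind element of $\cs_{m+1}$, hence congruent to $c_{m+1}$; reducing the two types of elements of $\cs_m$ modulo $d$ therefore yields
$$ c_m\equiv \eta_{m,\pi_m-1}+c_{m+1}\pmod{d}. $$

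Next I apply this identity to the explicit formula for $\gamma$. For $\ov\nu$-almost every $\ov y\in\ov Y$ the integer $\ov t(\ov y)=m_0$ is finite (since $\{\ov t>r\}$ has measure at most $2^{-r}$), and
$$ \gamma(\ov y)=\sum_{r=1}^{m_0-1}\eta_{r,\pi_r-1}+\eta_{m_0,\ov y_{m_0}}\equiv \sum_{r=1}^{m_0-1}\eta_{r,\pi_r-1}+c_{m_0}\pmod{d}. $$
Iterating the telescoping identity gives $c_{m_0}\equiv c_1-\sum_{r=1}^{m_0-1}\eta_{r,\pi_r-1}\pmod{d}$, so the right-hand side collapses to the constant $c_1$, independently of $\ov y$. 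Hence $\gamma\equiv c_1\pmod{d}$ almost everywhere, and the conclusion follows.

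This is essentially a bookkeeping argument and I do not foresee a serious obstacle. The main point requiring care is that the decomposition of $\cs_m$ must dovetail with $\cs_{m+1}$ exactly so the telescoping closes; as a side remark, only the residues $c_m$ for $m\geq 1$ enter the proof, so the bi-infinite indexing used in the definition of $\ce_m$ is inessential here.
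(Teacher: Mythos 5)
Your proof is correct and rests on the same key observation as the paper's: the second\nobreakdash-kind elements $\eta_{m,\pi_m-1}+\eta_{m+1,j}$ of $\cs_m$ tie the residue class at level $m$ to that at level $m+1$, and telescoping through the levels up to $\ov t(\ov y)$ shows all differences of values of $\gamma$ lie in $d\Z$. The paper telescopes the difference $\gamma(\ov z)-\gamma(\ov y)$ directly, while you package the same computation as the recursion $c_m\equiv\eta_{m,\pi_m-1}+c_{m+1}\pmod d$ and conclude $\gamma\equiv c_1$ a.e.; this is only a cosmetic reorganization (and is legitimate since $\pi_m\ge 2$ for all $m$ guarantees both kinds of elements of $\cs_m$ are present).
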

\begin{proof}
It is enough to prove that $\gamma(\ov z)-\gamma(\ov y)\in d\Z$ for all $\ov y, \ov z\in \ov Y$.
Let $t=t(\ov y)$ be the smallest positive integer such that $\ov y_t < \pi_t-1$ and $t'=t(\ov z)$ be the smallest positive integer such that $\ov z_{t'} < \pi_{t'}-1$. Suppose first that $t=t'$. Then by (\ref{eq:def_gamma}), we get
$$ \gamma(\ov z)-\gamma(\ov y)= \eta_t(\ov z_t) - \eta_t(\ov y_t) \in \ce_t\subset d\Z. $$
Suppose now that $t<t'$. Then, since the coordinates of $\ov y$ and $\ov z$ coincide up to $t-1$, (\ref{eq:def_gamma}) gives
\begin{eqnarray*}
 \gamma(\ov z)-\gamma(\ov y) &=& \left( \sum_{m=t}^{t'-1} \eta_m (\pi_m-1) + \eta_{t'}(\ov z_{t'})\right) - \eta_t(\ov y_t) \\
&=& \eta_{t'}(\ov z_{t'}) + \eta_{t'-1}(\pi_{t'-1}-1) - \eta_{t'-1}(0) \\
&& + \sum_{m=2}^{t'-2} \eta_{m+1}(0) + \eta_m(\pi_m-1) - \eta_m(0) \\
&& + \eta_t(0) - \eta_t(\ov y_t).
\end{eqnarray*}
Observing that each term of the above sum belongs to some $\ce_m$ for $t\le m\le t'-1$, we conclude the proof.
\end{proof}

Now we make a further assumption: Suppose that we can choose the stabilizing sequence such that, in the limit, the parameters 
$\eta_{m,j}$ are uniformly bounded (we require nothing on the limit parameters $\pi_m$). Recall that, in Section~\ref{sec:Intro}, we defined such a situation as the \emph{bounded-recurrent} case.

\begin{Lemma}
  \label{lemma:analyticity_of_P_j}
  In the bounded-recurrent case, for each $j\ge 1$ the sequence $\left(P_j(r)\right)_{r\ge0}$ converges to $0$ exponentially fast as $r\to\infty$.
\end{Lemma}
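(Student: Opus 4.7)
The plan is to exploit the uniform bound on the limit spacers $\eta_{m,j}$ to show that the base random variable $\gamma$ has exponentially decaying tails on $(\ov Y, \ov\nu)$, and then lift this to $\gamma^{(j)}$ by a simple union bound.

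First I would fix a constant $B$ with $\eta_{m,j}\le B$ for all $m\in\Z$ and all $0\le j\le \pi_m-1$, which exists by the bounded-recurrence hypothesis. The key observation is that under $\ov\nu$ (the Haar measure on $\ov Y$), the coordinates $\ov y_m$ are independent and uniformly distributed, so
$$ \ov\nu\bigl(\ov t(\ov y) > r\bigr) = \prod_{m=1}^{r}\frac{1}{\pi_m} \le 2^{-r},$$
since each $\pi_m\ge 2$. Combined with $\gamma(\ov y)=\sum_{m=1}^{\ov t(\ov y)}\eta_m(\ov y_m)\le B\,\ov t(\ov y)$ from the formula \eqref{eq:def_gamma}, this yields
$$ \ov\nu(\gamma\ge s)\le \ov\nu(\ov t > s/B) \le 2^{-s/B}$$
for every $s\ge 0$, i.e.\ exponential tail decay for $\gamma$.

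For the general $j\ge 1$, I would use that if $\gamma^{(j)}(\ov y)\ge r$, then there must exist $\ell\in\{0,\ldots,j-1\}$ with $\gamma(\ov S^\ell \ov y)\ge r/j$ (since $\gamma\ge 0$). A union bound together with the $\ov S$-invariance of $\ov\nu$ then gives
$$ P_j\bigl([r,\infty)\bigr)=\ov\nu\bigl(\gamma^{(j)}\ge r\bigr)\le \sum_{\ell=0}^{j-1}\ov\nu\bigl(\gamma\circ \ov S^\ell\ge r/j\bigr)= j\,\ov\nu(\gamma\ge r/j)\le j\cdot 2^{-r/(jB)}.$$
In particular $P_j(r)\le j\cdot 2^{-r/(jB)}$, so $P_j(r)\to 0$ exponentially fast as $r\to\infty$, which is what we want.

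I do not expect a substantial obstacle here: the bounded-recurrence assumption immediately gives the linear bound $\gamma\le B\,\ov t$, the lengths $\ov t$ are geometrically small by independence of the digits of the odometer, and the passage from $\gamma$ to $\gamma^{(j)}$ is routine. The only mild subtlety is that the $\pi_m$ are \emph{not} assumed bounded, so one must avoid any argument that would need uniform control on them; the estimate $\pi_m\ge 2$ (which is built into the odometer construction) is all that is required to make the probabilities of $\{\ov t > r\}$ summable.
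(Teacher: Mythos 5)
Your proof is correct and follows essentially the same route as the paper's: bound $\gamma\le B\,\ov t$ using the uniform bound on the $\eta_{m,j}$, use $\pi_m\ge2$ to get $\ov\nu(\ov t>r)\le 2^{-r}$, and pass from $\gamma$ to $\gamma^{(j)}$ by a union bound over the $j$ shifts, yielding the same estimate $P_j\bigl([r,\infty)\bigr)\le j\,2^{-r/(jB)}$. The only difference is that you spell out the intermediate tail bound for $\gamma$ itself, which the paper compresses into one line.
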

\begin{proof}
  Suppose that all coefficients $\eta_{m,j}$ are bounded by $R$. Then by~\eqref{eq:def_gamma}, $\gamma^{(j)}\ge r$ implies the existence of $\ell\in\{0,\ldots,j-1\}$ such that $\ov t(\ov S^\ell\ov y)\ge r/(jR)$, but this happens with probability less than $j\,2^{-r/(jR)}$.
  \end{proof}

Finally, to get our spectral disjointness result, we need a last assumption ensuring that the limit distribution $P_j$ is not concentrated on a single point. We say that the recurrent rank-one construction is \emph{non-flat} if we can choose the stabilizing sequence in such a way that there exists at least one $m\in\Z$ with $\ce_m\neq\{0\}$.

\begin{Th}
\label{thm:spectral-disjointness} Assume that the construction of the rank-one transformation $S^f$ is bounded-recurrent and non-flat. Then  for any $1\le j_1<j_2$ the continuous parts of the maximal spectral types of $(S^f)^{j_1}$ and $(S^f)^{j_2}$ are mutually singular.
\end{Th}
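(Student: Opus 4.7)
The plan is to reduce the claim to the spectral disjointness criterion of Proposition~\ref{llb1}, applied to analytic weak limits of powers of $U_{S^f}$ furnished by the integral-over-odometer framework of Section~\ref{sec:integral automorphisms}.

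First I would pass to a subsequence of the stabilizing sequence $(n_k)$ along which, for $j\in\{j_1,j_2\}$, the weak operator convergence $(S^f)^{-jh_{n_k+1}}\to F_j(S^f)$ holds, with $F_j(z)=\sum_{r\ge 0}P_j(\{r\})z^r$ as given by Proposition~\ref{mop1} combined with Lemma~\ref{lemma:convergence_to_P_j}; by Lemma~\ref{lemma:analyticity_of_P_j}, the bounded-recurrent hypothesis guarantees that $F_{j_1},F_{j_2}$ are analytic on $\ov{\bd}$. Since the theorem concerns only continuous spectral parts, I would then restrict $U_{S^f}$ to its invariant subspace $H_c$ of continuous spectral type — the continuous part of $\sigma_{(S^f)^{j}}$ being exactly the image of $\sigma_{U_{S^f}|_{H_c}}$ under $z\mapsto z^j$ — so that the setup of Section~\ref{sec:spectral_results} applies to $U\egdef U_{S^f}|_{H_c}$, and it suffices to establish $U^{j_1}\spdj U^{j_2}$.

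By Proposition~\ref{llb1}, this reduces to showing that for every $a\in\{0,\ldots,j_1-1\}$ and $b\in\{0,\ldots,j_2-1\}$, the analytic functions $z\mapsto F_{j_1}(\vep_{j_1}^a z^{j_2})$ and $z\mapsto F_{j_2}(\vep_{j_2}^b z^{j_1})$ do not coincide. Suppose for contradiction that equality holds for some pair $(a,b)$. Matching Taylor coefficients forces $j_2\Supp(F_{j_1})=j_1\Supp(F_{j_2})$, whence $\Supp(F_{j_1})\subset (j_1/g)\Z$ and $\Supp(F_{j_2})\subset (j_2/g)\Z$ with $g=\gcd(j_1,j_2)$; it also yields $P_{j_1}(\{(j_1/g)k\})=P_{j_2}(\{(j_2/g)k\})$ for every $k\ge 0$, i.e.\ the distributional identity $(g/j_1)\gamma^{(j_1)}\stackrel{d}{=}(g/j_2)\gamma^{(j_2)}$ on $\ov Y$. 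I would then invoke the non-flat assumption: some $\ce_{m_0}$ contains a nonzero element $d$. Since shifting the stabilizing subsequence by $(n_k)\mapsto(n_k+c)$ only relabels the bi-infinite data $(\pi_m,\eta_m)_{m\in\Z}$, I may assume $2^{m_0-1}>j_2$, so that Lemma~\ref{lemma:distribution_of_Pj} places $d$ in $\Supp(F_{j_i})-\Supp(F_{j_i})$ for both $i=1,2$; combined with the support constraints above and the coprimality of $j_1/g,j_2/g$, this forces $d$ to be a nonzero multiple of $j_1j_2/g^2$.

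The main obstacle is then to drive this numerical constraint, in conjunction with the full distributional identity $(g/j_1)\gamma^{(j_1)}\stackrel{d}{=}(g/j_2)\gamma^{(j_2)}$, to a contradiction in complete generality. The natural line of attack is a second-moment comparison: the identity forces $\Var(\gamma^{(j_1)})/j_1^2=\Var(\gamma^{(j_2)})/j_2^2$, whereas the explicit description $\gamma=\sum_{m\ge 1}\eta_m$ together with the bi-infinite structure of $(\ce_m)_{m\in\Z}$ furnished by non-flatness and bounded-recurrence allows the covariances $\mathrm{Cov}(\gamma,\gamma\circ\ov S^\ell)$ to be controlled sharply enough to rule out such an equality whenever $j_1\neq j_2$. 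The delicate point is handling the strong correlations imposed by the odometer dynamics on the partial sums $\gamma^{(j)}$; this is where most of the remaining technical work would lie, likely by further exploiting Lemma~\ref{lemma:multiple_of_d} to compare the subgroups of $\Z$ generated by the difference sets of $\Supp(F_{j_1})$ and $\Supp(F_{j_2})$, thereby contradicting the derived support identity.
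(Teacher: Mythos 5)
Your reduction is sound and follows the paper's route up to a point: restricting to the continuous part of the spectrum, invoking Proposition~\ref{mop1} together with Lemmas~\ref{lemma:convergence_to_P_j} and~\ref{lemma:analyticity_of_P_j} to get analytic weak limits $F_{j_1},F_{j_2}$, applying Proposition~\ref{llb1}/Corollary~\ref{clb2} to extract the support identity $j_2\Supp(F_{j_1})=j_1\Supp(F_{j_2})$, and using the shift-invariance of the stabilizing subsequence so that Lemma~\ref{lemma:distribution_of_Pj} applies. But the proof is not complete: the conclusion you reach --- that a single nonzero $d\in\ce_{m_0}$ must be a multiple of $j_1j_2/g^2$ --- is not a contradiction (nothing prevents all the spacer differences from being multiples of a fixed integer), and you acknowledge that ``most of the remaining technical work'' lies ahead. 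The second-moment comparison you sketch is speculative and unlikely to close the gap as stated: the identity $\Var(\gamma^{(j_1)})/j_1^2=\Var(\gamma^{(j_2)})/j_2^2$ only yields a contradiction under independence-type assumptions on $\gamma,\gamma\circ\ov S,\ldots$ which fail badly for the odometer.

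The missing idea is the arithmetic pincer the paper builds from the \emph{global} quantity $d_\infty\egdef\gcd\bigl(\bigcup_{m\in\Z}\ce_m\bigr)$. One writes $d_\infty=\gcd(d_1,\ldots,d_\ell)$ for finitely many $d_i\in\bigcup_m\ce_m$; Lemma~\ref{lemma:multiple_of_d}, applied with $d=d_\infty$ (since $\ce_m\subset d_\infty\Z$ for \emph{all} $m$), shows that every difference of two elements of $\Supp(F_{j_1})$ is a multiple of $d_\infty$ --- a genuinely new constraint on $P_{j_1}$, not derivable from the assumed support identity --- while Lemma~\ref{lemma:distribution_of_Pj} (after shifting) realizes each $d_i$ as a difference of two elements of $\Supp(F_{j_2})$. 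Feeding both into $j_1\Supp(F_{j_2})=j_2\Supp(F_{j_1})$ gives $j_1 d_\infty\mid j_2 d_i$ for every $i$, hence $j_1 d_\infty\mid j_2 d_\infty$, i.e.\ $j_1\mid j_2$; by symmetry $j_2\mid j_1$, so $j_1=j_2$, contradicting $j_1<j_2$. You cite Lemma~\ref{lemma:multiple_of_d} only as a possible tool at the very end, but never use it, and without it (or some substitute) the argument does not terminate.
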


\begin{proof}
Let $j_1,j_2\ge 1$ and assume that the continuous parts of the maximal spectral types of $(S^f)^{j_1}$ and $(S^f)^{j_2}$ are not mutually singular.
 Let $d_\infty\egdef\gcd \left(\bigcup_{m\in\Z}\ce_m\right)$. We can find a finite family $\{ d_1,\ldots,d_\ell\}\subset\bigcup_{m\in\Z}\ce_m$ such that $d_\infty=\gcd(d_1,\ldots,d_\ell)$. Let $d\in\{d_1,\ldots,d_\ell\}$, and let $m_0$ be such that $d\in\ce_{m_0}$.
Note that for any integer $L$, the shifted subsequence $(n_k-L)_{k\ge 1}$ is also stabilizing, and that replacing $(n_k)$ by $(n_k-L)$ simply shifts by $L$ the indices of $(\pi_m)$, $(\eta_m)$ and $(\ce_m)$. Hence, taking a shifted stabilizing subsequence $(n_k-L)$ if necessary, we can always assume that $2^{m_0-1}>\max(j_1,j_2)$.

We know that the distribution of  $f^{(j_iq_{n_k})}-j_ih_{n_k+1}$ converges weakly to $P_{j_i}$ ($i=1,2$).
Since the functions $(\eta_m)$ are uniformly bounded, Lemma~\ref{lemma:analyticity_of_P_j} ensures that $P_{j_i}(r)$ decreases exponentially fast.
We can apply Proposition~\ref{mop1}, which gives the weak convergence of $(S^f)_{j_ih_n}$ to $F_{j_i}(S^f)$, where
$$ F_{j_i}(z) = \sum_{r\ge 0} P_{j_i}(r) z^r $$
is analytic in $\ov \bd$. We can now apply Corollary~\ref{clb2}: Since the continuous parts of the maximal spectral types of $(S^f)^{j_1}$ and $(S^f)^{j_2}$ are not mutually singular, we have
\begin{equation}
\label{eq:Fj_1Fj_2}
 j_1 \Supp(F_{j_2}) = j_2 \Supp(F_{j_1}),
\end{equation}
where $\Supp(F_{j_i}) = \{ r\in\Z:\ P_{j_i}(r)\neq0\}$. By Lemma~\ref{lemma:distribution_of_Pj}, we know that
$\Supp(F_{j_2})$ contains two integers whose difference equals $d$. On the other hand, Lemma~\ref{lemma:multiple_of_d} ensures that the difference between two elements of $\Supp(F_{j_1})$ is always a multiple of $d_\infty$. From~(\ref{eq:Fj_1Fj_2}), we then get that $j_1d_\infty$ divides $j_2d$. But this holds for any $d\in\{d_1,\ldots,d_\ell\}$, hence $j_1$ divides $j_2$, and by symmetry $j_1=j_2$.
\end{proof}

\subsection{Bounded rank-one constructions}
\label{sec:bounded rank one}

\begin{Prop}\label{th1}  Assume the existence of the stabilizing subsequence $(n_k)$ and that there exists at least one $m\in\Z$ such that $\ce_m\neq\{0\}$. Then the only possible eigenvalues are rational and there are only finitely many of them.\end{Prop}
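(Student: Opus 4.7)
The plan is to exploit the weak convergence $T^{-h_{n_k+1}}\to F_1(T)$ supplied by Proposition~\ref{mop1} by applying it to an eigenfunction of $T\egdef S^f$. The resulting scalar convergence, together with the fact that eigenvalues lie on $\bs^1$, will force a modulus-one identity on a convex combination of points of $\bs^1$, which can only hold when these points all coincide.

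More precisely, Lemma~\ref{lemma:convergence_to_P_j} with $j=1$ provides the weak convergence of $\left(f^{(q_{n_k})}-h_{n_k+1}\right)_\ast$ to $P_1$ in $\cp(\Z)$; Proposition~\ref{mop1} then yields
$$ T^{-h_{n_k+1}}\tend{k}{\infty}F_1(T)=\sum_{r\ge0}P_1(\{r\})\,T^r \quad\text{weakly,} $$
where $F_1(z)=\sum_{r\ge0}P_1(\{r\})z^r$ is continuous on $\ov\bd$ by absolute summability of $P_1$. Now let $\la\in\bs^1$ be an eigenvalue of $T$ with unit eigenfunction $g$. Taking the matrix coefficient $\langle\,\cdot\,g,g\rangle$ of the weak limit on both sides, I would get
$$ \la^{-h_{n_k+1}}=\langle T^{-h_{n_k+1}}g,g\rangle\tend{k}{\infty}\langle F_1(T)g,g\rangle=F_1(\la). $$
Since the left-hand side has modulus one at every step, $|F_1(\la)|=1$. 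But $F_1(\la)$ is a convex combination (with nonnegative weights summing to one) of the unit complex numbers $\la^r$, $r\in\Supp(P_1)$, so its modulus attains one only if all of these $\la^r$ coincide; equivalently, $\la^{\alpha-\beta}=1$ for any $\alpha,\beta\in\Supp(P_1)$.

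To conclude, I must extract from $\Supp(P_1)$ two distinct integers with a controlled nonzero difference. By hypothesis some $m_0\in\Z$ satisfies $\ce_{m_0}\neq\{0\}$; since replacing $(n_k)$ by $(n_k-L)$ merely shifts the indices of $(\pi_m)$, $(\eta_m)$ and $(\ce_m)$ by $L$ (and remains a stabilizing subsequence for large $k$), I may assume after such a shift that $m_0\ge2$, so that the assumption $2^{m_0-1}>j=1$ of Lemma~\ref{lemma:distribution_of_Pj} is satisfied. Picking any $d\in\ce_{m_0}\setminus\{0\}$, that lemma then produces $\alpha,\beta\in\Supp(P_1)$ with $\alpha-\beta=d$, whence $\la^d=1$. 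Thus every eigenvalue of $T$ is a $|d|$-th root of unity, which yields simultaneously rationality and the finite cardinality bound. No step presents a genuine obstacle here; the only piece of care is the preliminary index shift needed to bring Lemma~\ref{lemma:distribution_of_Pj} into play.
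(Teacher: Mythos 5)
Your proof is correct, but it takes a genuinely different route from the paper's. You run the argument through the weak-limit machinery of Sections 4--5: Lemma~\ref{lemma:convergence_to_P_j} and Proposition~\ref{mop1} give $U_T^{-h_{n_k+1}}\to F_1(U_T)$, you evaluate the matrix coefficient at a unit eigenfunction to get $|F_1(\la)|=1$, and strict convexity of the unit disc forces $\la^{\alpha-\beta}=1$ for all $\alpha,\beta$ in the support of $P_1$; Lemma~\ref{lemma:distribution_of_Pj} with $j=1$ (after the index shift, exactly the device the paper uses in the proof of Theorem~\ref{thm:spectral-disjointness}) then supplies two such points at distance $d\neq 0$. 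The paper instead argues directly on the return-time distribution to the base of tower $n_k$: since $\gamma$ takes two distinct values $\alpha<\beta$ of $\cs_0$ each with probability at least $1/(\pi_0\pi_1)$, the return time takes the values $h_{n_k}+\alpha$ and $h_{n_k}+\beta$ with frequency bounded below, and Chacon's classical approximation argument (approximating an eigenvector by a function constant on the levels of the tower) yields $\la^{h_{n_k}+\alpha}\to 1$ and $\la^{h_{n_k}+\beta}\to 1$, hence $\la^{\beta-\alpha}=1$. Both arguments rest on the same combinatorial fact --- two return times with positive limiting frequency whose difference is a fixed nonzero integer --- but yours reuses the operator-convergence results already established (buying uniformity with the rest of the paper), while the paper's is more elementary and self-contained, needing only the lower bound on the frequencies and not Proposition~\ref{mop1}. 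Two minor points of care in your version: you should choose $d\ge 1$ (possible since $\ce_{m_0}=\cs_{m_0}-\cs_{m_0}$ is symmetric) to match the hypothesis of Lemma~\ref{lemma:distribution_of_Pj}, and the fact that $F_1(\la)$ is a genuine convex combination of unimodular numbers uses that $P_1$ has total mass one, which holds because $\gamma$ is finite almost everywhere.
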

\begin{proof}
 Replacing if necessary the stabilizing subsequence $(n_k)$ by $(n_k+m)$, we can assume that $\ce_0\neq\{0\}$. Then $\cs_0$ contains at least two different integers $\alpha<\beta$, and the function $\gamma$ defined on $\ov Y$ by (\ref{eq:def_gamma}) takes the values $\alpha$ and $\beta$ both with probability at least $1/(\pi_0\pi_1)$. Recall that the integral automorphism $S^f$ is a rank-one transformation which is given with a refining sequence of towers. Consider the distribution of the return time on the base of tower $n_k$: For all large enough $k$, this return time takes the values $h_{n_k}+\alpha$ and $h_{n_k}+\beta$ both with probability at least $1/(\pi_0\pi_1)$. Approximating an eigenvector by a function which is constant on the levels of tower $n_k$, Chacon's standard argument~\cite{C2} yields that any eigenvalue $\lambda$ of $S^f$ must satisfy
$$ \lambda^{h_{n_k}+\alpha} \tend{k}{\infty} 1,\quad\mbox{and } \lambda^{h_{n_k}+\beta} \tend{k}{\infty} 1. $$
In particular, $\lambda^{\beta-\alpha}=1$.
\end{proof}

\begin{Th}
\label{thm:bounded rank one}
 If in the construction of the rank-one transformation $S^f$ all parameters $p_n$ and $s_{n,j}$ are uniformly bounded, then one of the three following properties holds:
\begin{enumerate}
 \item $S^f$ is weakly mixing;
 \item $S^f$ has finitely many eigenvalues, and they are all rational;
 \item $S^f$ is isomorphic to an odometer, hence has discrete spectrum and rational eigenvalues.
\end{enumerate}
In either case different (positive) powers are spectrally disjoint on the continuous part of the spectrum.
Moreover, in the cases where eigenvalues exist, the corresponding eigenvectors are constant on levels of towers of sufficiently high order.
\end{Th}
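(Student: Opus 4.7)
The plan is to combine Theorem~\ref{thm:spectral-disjointness} and Proposition~\ref{th1} with a direct structural analysis of the degenerate (everywhere flat) case. Since the parameters $p_n$ and $s_{n,j}$ are uniformly bounded, only finitely many local configurations $(p_n, s_{n,0}, \ldots, s_{n,p_n-1})$ are possible, so a diagonal extraction yields a stabilizing subsequence $(n_k)$; the limit parameters $\pi_m,\eta_{m,j}$ are then automatically bounded, so the construction is bounded-recurrent. The proof splits according to whether some stabilizing subsequence is non-flat.

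If a non-flat stabilizing subsequence exists, i.e.\ some $\ce_m\neq\{0\}$, Theorem~\ref{thm:spectral-disjointness} directly gives spectral disjointness of different positive powers on the continuous part of the spectrum, while Proposition~\ref{th1} forces the eigenvalues to be roots of unity of bounded order, hence finitely many. This yields case~(1) when only the trivial eigenvalue appears, and case~(2) otherwise.

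If on the contrary every stabilizing subsequence is flat, a compactness argument shows that the local configuration at stage $n$ must itself be flat for all $n$ large enough: otherwise one could extract a stabilizing subsequence along the ``bad'' indices and produce some $\ce_m\neq\{0\}$. Unfolding the flat condition, from some stage $N_0$ on we must have $s_{n,j}=c_n$ for $0\le j\le p_n-2$ and $s_{n,p_n-1}=c_n-c_{n+1}$; since each $s_{n,p_n-1}\ge0$, the sequence $(c_n)$ is non-increasing in $\Z_+$, hence eventually constant equal to some $C\ge0$. A short telescoping computation then shows that $\sum_{n\ge N_0}s_n = c_{N_0}\,\ind{D^{(N_0-1)}_{q_{N_0-1}-1}}$, so $f$ is $\mathcal{D}_{N_0-1}$-measurable. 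Using Lemma~\ref{mol2} one sees that $f^{(q_{N_0-1})} = h_{N_0}+c_{N_0}$ is a constant, hence the first return of $S^f$ to the base $D^{(N_0-1)}_0\times\{0\}$ has constant value $h_{N_0}+c_{N_0}$, with induced transformation the tail odometer on $\prod_{n\ge N_0}\{0,\ldots,p_n-1\}$. This exhibits $S^f$ as an integral automorphism with constant roof over an odometer, which is itself an odometer, giving case~(3). The spectral disjointness assertion is automatic here since the continuous part of the spectrum is empty.

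The eigenvector claim follows: in case~(2), the eigenvalues have bounded order (Proposition~\ref{th1}), and the standard Chacon-type approximation of eigenfunctions on the levels of the refining tower sequence, combined with the boundedness of the parameters, stabilizes at a finite stage; in case~(3), the eigenvectors of the odometer $S^f$ are characters, which by construction are constant on the cylinder sets, hence on the levels of the refining towers. The main subtlety is the structural analysis of case~(3): verifying that ``every limit is flat'' forces eventual flatness of $(s_n)$ at every stage, and carrying out the telescoping computation that turns $f$ into a function depending only on finitely many odometer coordinates, so that $S^f$ becomes a constant-roof extension of an odometer.
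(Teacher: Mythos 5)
Your proof follows essentially the same route as the paper's: the same dichotomy between the existence of a non-flat stabilizing subsequence (handled by Theorem~\ref{thm:spectral-disjointness} and Proposition~\ref{th1}) and eventual flatness of every step, which—exactly as in your telescoping computation—forces a constant return time $h_{N}+s_{N,0}$ to the base of a high tower and hence an odometer. The only point where the paper is more explicit is the final eigenvector claim: rationality of $\lambda$ is used to upgrade Chacon's estimate $|\lambda^{h_n+s_{n,j}}-1|\to 0$ to the exact identity $\lambda^{h_n+s_{n,j}}=1$ for all large $n$ and $0\le j\le p_n-2$, whence every return time to the base is divisible by the order of $\lambda$ and an eigenvector constant on the levels can be written down explicitly—this is the content behind your phrase ``stabilizes at a finite stage.''
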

\begin{proof}
Let us say that step $n$ of the construction is a \emph{flat step} if 
\begin{equation}
 \label{eq:flat_step}
s_{n,0}=\cdots=s_{n,p_n-1}=s_{n,p_n}+s_{n+1,0}=\cdots=s_{n,p_n}+s_{n+1,p_{n+1}-1}.
\end{equation}
Suppose that there exists an increasing sequence of $n$'s for which (\ref{eq:flat_step}) fails. Since all parameters are bounded, we can extract from this sequence a stabilizing subsequence for which $\ce_0\neq\{0\}$. Then, according to Proposition~\ref{th1}, either $S^f$ is weakly mixing or $S^f$ has finitely many eigenvalues which are all rational. In this case the spectral disjointness of the powers on the continuous part of the spectrum is given by Theorem~\ref{thm:spectral-disjointness}.

On the other hand, if there exists an integer $N$ such that, for any $n\ge N$, step $n$ is flat, then the return time on the base of tower $N$ is constant, equal to $h_{N}+s_{N,0}$. But then $S^f$ is isomorphic to the odometer constructed on the product space
$$ \{0,1,\ldots,h_N+s_{N,0}-1\}\times\prod_{n\ge N}\{0,1,\ldots,p_n-1\}. $$
In that case, $S^f$ has purely point spectrum, so there is nothing to prove concerning the spectral disjointness of the powers.

Now assume that $\lambda$ is a rational eigenvalue. As the cutting parameters are bounded, the adaptation of Chacon's approximation argument used in the proof of Proposition~\ref{th1} yields in fact that 
$$ \sup_{0\le j\le p_n-2} \left| \lambda^{h_n+s_{n,j}} -1\right| \tend{n}{\infty} 0. $$
But since $\lambda$ is rational, we get that for all $n$ large enough and all $0\le j\le p_{n}-2$,
\begin{equation}
  \label{eq:return_to_the_base}
\lambda^{h_n+s_{n,j}}  = 1.
\end{equation}
Let $k=\min\{\ell\ge1:\ \lambda^\ell=1\}$ be the order of $\lambda$.  Let $N$ be such that~(\ref{eq:return_to_the_base}) is valid for all $n\ge N$. Consider the base $B_N\egdef D_0^{(N)}\times\{0\}$ of tower $N$ for $T$: Then all return times to $B_N$ are divisible by $k$. Consider the function $f$ defined by 
$f=\lambda^j$ on $S^f_j B_N\setminus\bigcup_{0\le \ell\le j-1}S^f_\ell B_N$. It is straightforward to check that $f$ is an eigenvector associated to the eigenvalue $\lambda$, and that $f$ is constant on the levels of tower $N$. Since the eigenspace associated to $\lambda$ has dimension~1, this concludes the proof.
\end{proof}

\begin{Remark}
\em
If we drop the assumption that the cutting parameters of the rank-one construction are bounded, then eigenvectors associated to a rational eigenvalue may never be constant on levels of towers. 
\end{Remark}

Indeed, consider the following parameters: $p_n=2^n$, $s_{n,j}=1$ if $j=2^{n-1}-1$ or $j=2^{n-1}$, $s_{n,j}=0$ otherwise. Consider the function $f_n$ supported on the levels of tower $n$, taking the value $1$ on even levels and $-1$ on odd levels. It is easy to check that $f_{n+1}$ coincides with $f_n$ outside a set whose measure decreases exponentially fast. An application of Borel-Cantelli lemma shows that the sequence $(f_n)$ converges almost everywhere to a function $f$ which is an eigenvector for the eigenvalue $-1$. But this eigenvector is never constant on the levels of the towers since some return times to these levels are odd.

\section{Examples where the cutting parameter $p_n$ is unbounded}
\label{sec:unbounded}
The purpose of this section is to show that our method for proving the spectral disjointness of the powers can also work in classical rank-one examples where we have $p_n\tend{n}{\infty}\infty$, which of course prohibits the existence of stabilizing subsequences. The two classes of examples which we present here are known to be rigid, and weakly mixing (see~\cite{Fe}). 

\subsection{Rigid Generalized Chacon's maps}
The parameters of this family of rank-one constructions satisfy the following properties:
\begin{itemize}
  \item $p_n\tend{n}{\infty}\infty$;
  \item there exist $0\le r_n<p_n$ such that $s_{n,r_n}=1$ and $s_{n,j}=0$ for $j\neq r_n$. 
\end{itemize}
The same cocycle-over-odometer approach can be used to show that for any fixed $0<\alpha<1$, we have the following weak convergence (where $\lfloor x\rfloor$ stands for the integer part of the real number $x$)
\begin{equation}
  \label{eq:rigid-chacon}
  T^{-\lfloor\alpha p_n\rfloor h_n} \tend{n}{\infty} \alpha T + (1-\alpha) \Id.
\end{equation}
Now let $1\le j_1<j_2$, and choose $\alpha$ sufficiently small so that $j_2\alpha<1$. We get 
$$ T^{-j_i\lfloor\alpha p_n\rfloor h_n} \tend{n}{\infty} j_i\alpha T + (1-j_i\alpha) \Id \quad(i=1,2), $$
and by Proposition~\ref{llb1} we can conclude that $T^{j_1}\spdj T^{j_2}$. 

\subsection{Katok's map}

For this rank-one construction, we require $p_n$ to be an even number, growing sufficiently fast to infinity (to be precised below). The spacers are defined by
$$ s_{n,j}=\begin{cases}
             0 & \text{ if }0\le j<p_n/2,\\
             1 & \text{ if }p_n/2\le j<p_n.
           \end{cases}
$$
 Katok's maps appears as a special case of a three-interval exchange maps~\cite{Fe-Ho-Za}. We point out that Bourgain in~\cite{Bo} proved that all three-exchange maps  with Keane condition are disjoint from Möbius function.

In this example, analytic functions of $U_T$ abound in the weak closure of $\{U_T^k:\:k\in\Z\}$. Indeed, Ryzhikov \cite{Ry2} has shown that we can find as a weak limit of some subsequence $U_T^{n_k}$ any convex combination of $\Theta$ and the $U_T^k$, $k\in\Z$, where $\Theta$ denotes the orthogonal projection on the constant subspace (which is zero if we only consider the action of the Koopman operators on the subspace of $L^2$ orthogonal to constant functions). However, the limits described in~\cite{Ry2} are all derived from the following weak convergence:
$$ U_T^{-j h_n}\tend{n}{\infty} \dfrac{Id+U_T^j}{2}. $$
It follows that, if $ U_T^{\ell_n}\tend{n}{\infty} F(U_T)$ is a weak limit given by Ryzhikov's argument, then we also have $U_T^{j\ell_n}\tend{n}{\infty} F(U_T^j)$,
which is precisely the situation where, in spite of the existence of the weak limits, our method can not conclude that we have spectral disjointness of different powers. 

We therefore have to look for other weak limits. To simplify the argument, we make the following assumption on the growth rate of $p_n$:
\begin{equation}
  \label{eq:growth rate in Katok}  
  \dfrac{p_n}{h_n} \tend{n}{\infty} +\infty. 
\end{equation}
Let $0<\alpha<1$. By~\eqref{eq:growth rate in Katok}, we can find a sequence of integers $(\ell_n)$ such that for all $n$, 
\begin{align}
  \label{multiple} &\ell_n\text{ is a multiple of }h_n+1,\\
  \label{asymptotics of ell_n} &\ell_n=\alpha p_n/2 + o(p_n).
  \end{align}
\begin{Lemma}
\label{lemma:alpha weak mixing of Katok}
  For $\ell_n$ as above,  we have the following weak convergence.
\begin{equation}
  \label{eq:weak convergence in Katok}
  U_T^{-\ell_n h_n} \tend{n}{\infty} \alpha \Theta + (1-\alpha) \Id.
\end{equation}
\end{Lemma}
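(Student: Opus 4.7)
The plan is to prove the equivalent weak convergence of $U_T^{\ell_n h_n}$ (the adjoint of $U_T^{-\ell_n h_n}$) to the self-adjoint operator $\alpha\Theta+(1-\alpha)\Id$, and to do so by a direct analysis of the cutting-and-stacking picture. By a density argument it suffices to verify
\[
\langle U_T^{\ell_n h_n}f,g\rangle \tend{n}{\infty} (1-\alpha)\langle f,g\rangle+\alpha\Bigl(\int f\Bigr)\Bigl(\int\bar g\Bigr)
\]
for $f,g$ constant on the levels of some fixed tower $\cp_N$, taking $n\ge N$; denote by $f_i,g_i$ their common value on the $\cp_n$-level $i\in\{0,\ldots,h_n-1\}$. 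Since $\cp_{n+1}$ asymptotically fills the whole space, I describe a point by its position $(j,i)$ in $\cp_{n+1}$, namely column $j\in\{0,\ldots,p_n-1\}$ and within-column level $i\in\{0,\ldots,h_n-1\}$ (which coincides with its $\cp_n$-level), sitting at absolute level $jh_n+\sigma_{n,j}+i$ with $\sigma_{n,j}=0$ for $j\le p_n/2$ and $\sigma_{n,j}=j-p_n/2$ for $j>p_n/2$. The action of $T^{\ell_n h_n}$ is to shift the absolute level by $\ell_n h_n$.

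Setting $m_n\egdef \ell_n/(h_n+1)$ (an integer by~\eqref{multiple}), I split the columns of $\cp_{n+1}$ into four zones. Two \emph{identity zones}: the lower one $j+\ell_n\le p_n/2$, and the upper one $p_n/2<j$ with $j+m_n h_n\le p_n-1$; on both, the within-column level of the image equals that of the starting point, the upper case using that $\ell_n h_n=m_n h_n(h_n+1)$ is a multiple of the column-plus-spacer period $h_n+1$ of the upper half. Two \emph{mixing zones}: the \emph{crossing zone} $j\in(p_n/2-\ell_n,p_n/2]$, where the shift carries the point across the spacer transition into the upper half, and the \emph{overflow zone} $j>p_n-1-m_n h_n$, where the shift carries it past the top of $\cp_{n+1}$ into the second copy of $\cp_{n+1}$ sitting inside $\cp_{n+2}$ (stacked without an intervening spacer because $s_{n+1,0}=0$, as $0<p_{n+1}/2$). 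A direct count using $\ell_n=\alpha p_n/2+o(p_n)$ and $m_n=o(p_n)$ shows that the identity zones have total measure tending to $1-\alpha$ and the mixing zones to $\alpha$.

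The key computation is that in each mixing zone, the within-column level of the image cycles uniformly through $\{0,\ldots,h_n-1\}$ as one varies the starting column. The absolute-level arithmetic gives $i_1\equiv p_n/2-j+i\pmod{h_n+1}$ in the crossing zone and, after relocating the image in the second copy of $\cp_{n+1}$, $i_1\equiv j+\ell_n-p_n+i\pmod{h_n}$ in the overflow zone. For fixed $i$, as $j$ runs over its $m_n(h_n+1)$ (resp.\ $m_n h_n$) consecutive values, each residue class is hit exactly $m_n$ times; in the crossing zone the extra residue $h_n\pmod{h_n+1}$ corresponds to landing on a spacer, which contributes negligibly as $h_n\to\infty$ by~\eqref{eq:growth rate in Katok}. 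Summing the four contributions and using that each $\cp_n$-level has measure $\approx 1/h_n$, the identity zones yield $(1-\alpha)\langle f,g\rangle$ while the mixing zones yield $(2m_n h_n/p_n)\bigl(\int f\bigr)\bigl(\int\bar g\bigr)\to\alpha\bigl(\int f\bigr)\bigl(\int\bar g\bigr)$, giving the desired limit.

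The main technical obstacle lies in the overflow zone: the image must be located inside $\cp_{n+2}$, which forces one to open up the construction of this larger tower, to exploit the Katok-specific fact $s_{n+1,0}=0$ to ensure the two relevant copies of $\cp_{n+1}$ are adjacent, and to verify (using $\alpha<1$, so that $\ell_n-1<p_n/2$) that the image indeed lands in the \emph{lower} half of the second copy, where no further spacer intervenes and the clean residue-modulo-$h_n$ computation applies.
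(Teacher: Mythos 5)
Your proof is correct and follows essentially the same route as the paper's own (sketched) argument: a direct analysis of the cutting-and-stacking picture of tower $n+1$, splitting it into a part of measure $\to 1-\alpha$ on which $T^{\ell_n h_n}$ preserves the $\cp_n$-level and a part of measure $\to\alpha$ on which the image spreads uniformly over the levels. Your four zones correspond exactly to the paper's treatment of the two halves $B_1$, $B_2$ of a level, with your crossing and overflow zones being the two ``staircases''; your residue computations modulo $h_n+1$ and $h_n$ just make the uniform spreading explicit.

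One point of your write-up is not quite right, though the conclusion survives. In the overflow zone you justify the absence of an intervening spacer by $s_{n+1,0}=0$, but that only covers points lying in the \emph{first} copy of $\cp_{n+1}$ inside $\cp_{n+2}$; a generic point of the overflow zone sits in an arbitrary copy, and for the copies forming the upper half of $\cp_{n+2}$ (half the measure) there \emph{is} one spacer before the next copy, while the last copy overflows into $\cp_{n+3}$ and beyond (a negligible set). The spacer merely shifts the landing level by a constant, which does not affect the uniform cycling of $j+i\bmod h_n$, so the limit is unchanged --- this is precisely the remark the authors make in the caption of their figure --- but as stated your justification covers only part of the overflow zone.
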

Transformations admitting this kind of weak limits are said to be $\alpha$-weakly mixing. The $\alpha$-weak mixing of Katok's map for any $\alpha$ is of course not a new result (it is a direct consequence of Ryzhikov's theorem), but what is important for our purposes is the appropriate control of the subsequence along which we get this $\alpha$-weak mixing. 
\begin{proof}[Sketch of proof of Lemma~\ref{lemma:alpha weak mixing of Katok}]
Unfortunately, the cocycle-over-odometer approach used in the other examples does not work so well here. Indeed the corresponding limit distributions put some mass at infinity, and this approach only allows to see the $(1-\alpha) \Id$ part of the limit. We thus need a more down-to-earth method. 

Let $B$ be a level of tower $n$ in the construction. We consider the first half $B_1$ of $B$, which has no spacer above in tower $n$ (in light grey on Figure~\ref{fig:katok}). 
In the cutting process, $B_1$ is cut into $p_n/2$ pieces, which are successive images of the first piece by the transformation $T^{h_n}$. Consider the image of $B_1$ by $T^{\ell_n h_n}$: The leftmost $(p_n/2-\ell_n)$ pieces of $B_1$ are still in $B_1$ after this transformation, while the $\ell_n$ remaining pieces have moved to the rightmost half of tower $n$ where the height is now $h_n+1$ (this second half being coverd by a single spacer). Therefore these pieces will uniformly spread over tower $n$, and by~\eqref{asymptotics of ell_n} they represent a proportion approximately equal to $\alpha$ of $B_1$.  Consider now the second half $B_2$ of $B$ (in dark grey on Figure~\ref{fig:katok}).
$B_2$ is also cut into $p_n/2$ pieces, but these pieces are now successive images by the transformation $T^{h_{n+1}}$. A similar destiny is reserved to these pieces when transformed by $T^{\ell_n h_n}$: The $(\ell_n h_n)/h_{n+1}$ rightmost pieces of $B_2$ move into the left half of 
tower $n$, where they uniformly spread over the levels of the tower, while the remaining pieces which represent a proportion approximately equal to $(1-\alpha)$ of $B_2$ are still in $B_2$ by~\eqref{multiple}.  

\begin{figure}[htp]
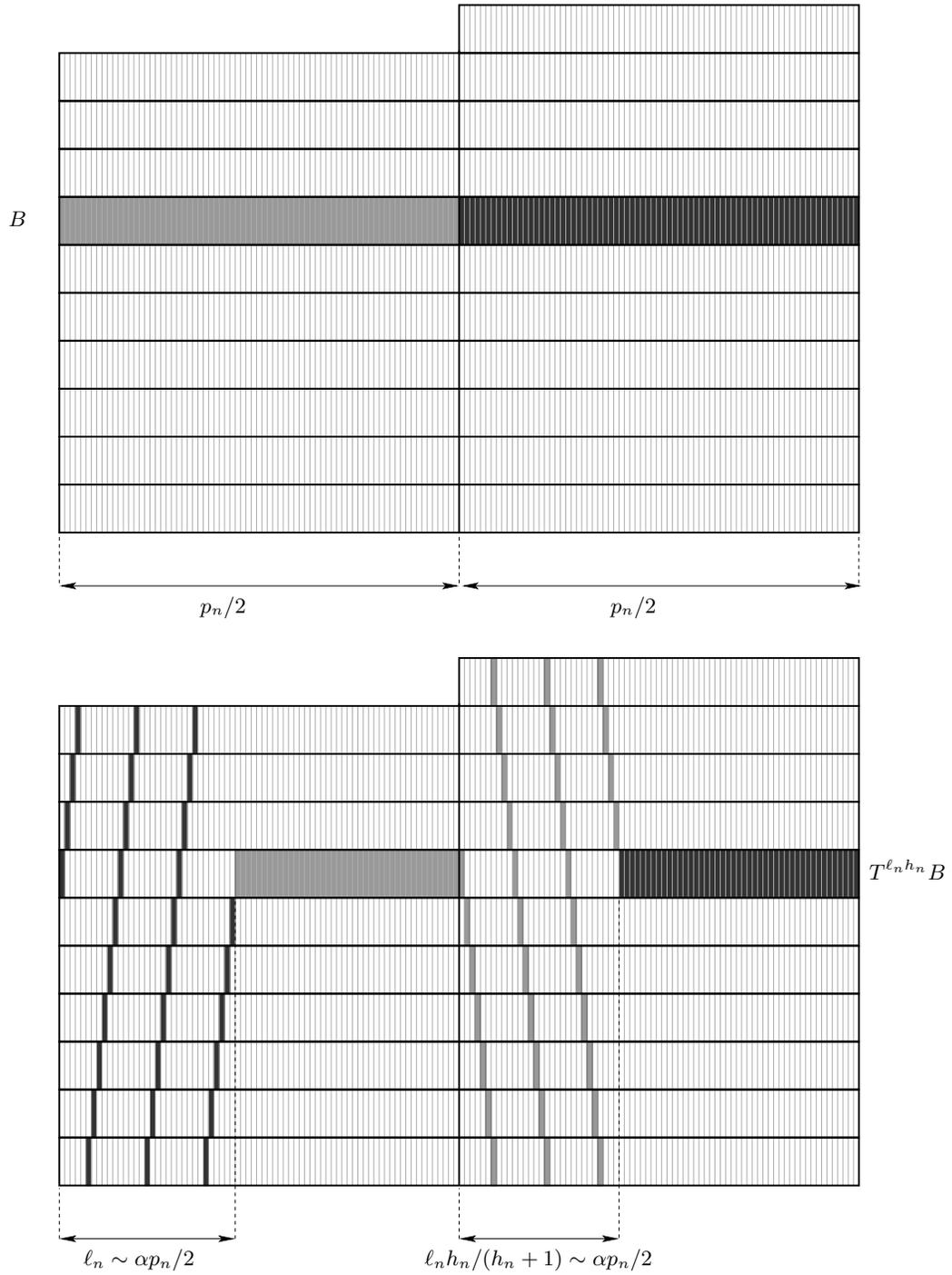

  \centering
  \input{katok1.pstex_t}\\
  
  \vspace{5mm}
  
  \input{katok2.pstex_t}
  \caption{$\alpha$ weak-mixing in Katok's map. Top: A level $B$ in tower $n$. Bottom: Image of $B$ by $T^{\ell_n h_n}$. Note that the picture is drawned as if there were no spacer above towers $n+1$, $n+2$\dots. These spacers only affect the staircase on the left of the bottom figure, but they do not change its uniform spreading over the levels of the tower. }
  \label{fig:katok}
\end{figure}

Now, the method to end the proof is quite standard. If $A$ and $B$ are unions of levels of some tower $n_0$, then they are also unions of levels of any tower $n$ for $n\ge n_0$, and the above analysis proves that 
$$ \mu\left(A\cap T^{\ell_n h_n} B\right) \tend{n}{\infty} \alpha \mu(A)\mu(B) + (1-\alpha)\mu(A\cap B). $$
This convergence extends to arbitrary measurable sets $A$ and $B$ since any set can be approximated by unions of levels of a tower in the construction. This proves the announced weak convergence when the operators act on simple functions, and this is enough to conclude by density of simple functions in $L^2$.
\end{proof}

With this result, we can now proceed as in the preceding example: Let $1\le j_1<j_2$, choose $\alpha$ small enough so that $j_2\alpha < 1$ and define $\ell_n$ as above. Then, by~\eqref{eq:weak convergence in Katok} we have for $i=1,2$
$$ 
U_T^{-j_i\ell_n h_n} \tend{n}{\infty} j_i\alpha \Theta + (1-j_i\alpha) \Id,
$$
which by Proposition~\ref{llb1} is enough to state the spectral disjointness of $T^{j_1}$ and $T^{j_2}$.

\begin{Remark}
 \emph{We can observe that the same argument applies for a family of random constructions described by Del~Junco and the second author in~\cite{DJ-L}.}
\end{Remark}

\section{Sarnak's conjecture for the symbolic model associated to some rank-one transformations}
  
  \label{sec:symbolic}

  \subsection{Symbolic model for a rank-one construction}
  We recall first the construction of the symbolic model associated to a rank-one automorphism given by the cutting and stacking method.  In order to distinguish the different models, we denote here by $\tcs$ the rank-one automorphism of $([0,1),\lambda)$ (where $\lambda$ is Lebesque measure) constructed by cutting and stacking from the parameters $(p_n)_{n\ge1}$ and $(s_{n,i})_{n\ge 1, 0\le i\le p_n-1}$. We consider the partition $\Pa=\{P_0,P_1\}$ of  $[0,1)$, where $P_0$ is the unique level of tower~1, and $P_1\egdef [0,1)\setminus P_0$ is the union of all spacers added in the successive steps of the construction.
   With the same parameters, we inductively define a sequence of finite words $(B_n)_{n\ge 1}$ over the alphabet $\{0,1\}$, which we call \emph{building blocks}:
  $$ B_1\egdef 0 ; \quad B_{n+1}\egdef B_n 1^{s_{n,0}}B_n 1^{s_{n,1}}\cdots B_n 1^{s_{n,p_n-1}}. $$
  The length $|B_n|$ of the building block of order $n$ is equal to $h_n$, the height of tower $n$, and we can view $B_n$ as the $\Pa$-name (of length $h_n$) of a point in the basis of tower~$n$. The symbols $1$ in the building blocks will also be called \emph{spacers}.
  
  Then we consider the subshift $\Omega\subset \{0,1\}^\Z$, which is the set of bi-infinite sequences $(\omega_j)_{j\in\Z}$ satisfying
  \begin{equation}
    \label{eq:def_subshift}
    \forall i<j ,\ \omega|_i^j\egdef \omega_i \omega_{i+1} \ldots \omega_{j-1} \mbox{ is a subword of }B_m\mbox{ for some $m\ge1$}.
  \end{equation}
We consider on $\Omega$ the product topology, which turns $\Omega$ into a metrizable compact space, and we denote by $\ts$ the shift of coordinates, which is a homeomorphism of $\Omega$. We recall that, since our rank-one construction is supposed to live on a probability space, we have
\begin{equation}
  \label{eq:finite_measure}
  \sum_{n\ge 1}\dfrac{1}{h_{n+1}} \sum_{i=0}^{p_n-1}s_{n,i}<\infty.
\end{equation}
From this, it follows that for any finite word $W$, the frequency of $W$ in $B_n$ converges as $n\to\infty$ to a limit $\mu(W)$. The family $\mu(W)$ when $W$ ranges over the set of all finite words defines a shift-invariant probability measure $\mu$ on $\Omega$.  The measure-preserving dynamical system $(\Omega, \ts, \mu)$ can now be viewed as a factor of the rank-one automorphism $\tcs$, via the map $\psi:\ [0,1)\to\Omega$ defined by 
$$ \psi(x) \egdef \bigl( \Pa(T^jx)\bigr)_{j\in\Z}. $$

As observed by Ferenczi in~\cite{Fe}, the symbolic system  $(\Omega, \ts)$ may in some cases be degenerate (for example with the von Neumann-Kakutani construction: $B_{n+1}=B_nB_n$). However, it follows from the analysis developed by Kalikow in the annex of~\cite{kalikow} that the factor map $\psi$ is one-to-one, hence an isomorphism of the measure-preserving systems, whenever the limit sequence $B_\infty\egdef \lim_{n\to\infty}B_n$ is aperiodic. 

\begin{Lemma}
  \label{lemma:periodicity}
  Assume that $B_\infty$ is periodic. Then $\tcs$ is isomorphic to an odometer, in particular, it has infinitely many rational eigenvalues.
\end{Lemma}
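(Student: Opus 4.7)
The plan is to extract from periodicity of $B_\infty$ the ``flat step'' condition used in the proof of Theorem~\ref{thm:bounded rank one} for all sufficiently large $n$, and then to invoke the flat-case branch of that proof.

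Let $p$ denote the minimal period of $B_\infty$. Since $B_1 = 0$ is a prefix of every $B_n$, the symbol $0$ occurs in $B_\infty$ at every position congruent to $0$ modulo $p$; hence every run of consecutive $1$s in $B_\infty$ has length at most $p-1$. In particular each spacer block $1^{s_{n, i}}$, which appears as a subword of $B_{n+1}$ and hence of $B_\infty$, satisfies $s_{n,i}\le p-1$; similarly each junction block $1^{s_{n, p_n - 1} + s_{n+1, j}}$ of consecutive $1$s separating two copies of $B_n$ across the boundary of two successive $B_{n+1}$'s inside $B_{n+2}$ satisfies $s_{n, p_n - 1} + s_{n+1, j} \le p - 1$ for $0\le j\le p_{n+1}-2$.

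Next I would extract the congruences implied by the same periodicity. For $n$ large enough the minimal period of $B_n$ equals $p$ (otherwise a strictly smaller period of $B_n$ would persist in $B_\infty$, contradicting minimality of $p$), so each of the $p_n$ copies of $B_n$ inside $B_{n+1}$ must begin at a position of $B_\infty$ divisible by $p$. A short induction on the starting positions then yields $s_{n,i}\equiv -h_n\pmod p$ for $0\le i\le p_n-2$, and the same reasoning applied to the $B_n$'s inside the second copy of $B_{n+1}$ within $B_{n+2}$ gives $s_{n, p_n - 1} + s_{n+1, j}\equiv -h_n\pmod p$ for $0\le j\le p_{n+1}-2$. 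Combined with the bounds $\le p-1$ above, each of these nonnegative integers is uniquely determined by its residue modulo $p$, so they all coincide with the common value $-h_n\bmod p$. This is precisely the flat-step condition used in the proof of Theorem~\ref{thm:bounded rank one}.

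Since step $n$ is flat for every sufficiently large $n$, the flat-case branch of the proof of Theorem~\ref{thm:bounded rank one} applies verbatim (and this branch does not require boundedness of the cutting parameters $p_n$): the first return time to the base $F_N$ of tower $N$ becomes constant, and $\tcs$ is isomorphic to the odometer constructed on the product space
$$\{0,1,\ldots,h_N+s_{N,0}-1\}\times\prod_{n\ge N}\{0,1,\ldots,p_n-1\},$$
which has infinitely many rational eigenvalues since $p_n\ge 2$ for all $n$. The main technical obstacle I anticipate is justifying cleanly that the upper bound $s_{n,i}\le p-1$ (and its junction analogue) really follows from $p$-periodicity of $B_\infty$; once this is combined with the congruences, the flat-case argument already present in the paper completes the proof without further work.
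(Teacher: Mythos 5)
Your argument is correct, and both proofs ultimately rest on the same pivot: periodicity of $B_\infty$ forces the number of spacers between two successive occurrences of $B_n$ to be constant, hence the return time to the base of tower $n$ is constant and $\tcs$ is an odometer. The executions, however, differ. The paper gets there in one stroke: writing $B_n=B_n'01^a$, two successive occurrences of $B_n$ exhibit the pattern $01^{a+s}0$ right after $B_n'$, and $p$-periodicity pins this pattern to a definite position, so $s$ is determined. You instead establish (i) that every run of consecutive $1$s in $B_\infty$ has length at most $p-1$ (because the symbol $0=B_\infty(0)$ recurs at every multiple of $p$ --- this is in fact the easy part, not the obstacle you anticipated), and (ii) that all occurrences of $B_n$ start at positions congruent modulo $p$; your parenthetical justification of the minimal-period claim is terse, but it is a standard Fine--Wilf-type argument valid once $h_n\ge 2p$, and (ii) then follows since two occurrences at offset $d$ with $0<d<p\le h_n$ would give $B_n$ the period $d$. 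Combining the congruences $\equiv -h_n\pmod p$ with the bound $\le p-1$ yields the flat-step condition, after which you delegate to the flat branch of the proof of Theorem~\ref{thm:bounded rank one}, correctly noting that this branch does not use boundedness of $(p_n)$. Your route is longer but makes the link with the flat-step dichotomy explicit and gives quantitative information (all relevant spacer counts equal the residue of $-h_n$ modulo $p$); the paper's version is shorter and avoids any discussion of minimal periods. Incidentally, the version of the flat-step condition you derive, namely $s_{n,0}=\cdots=s_{n,p_n-2}=s_{n,p_n-1}+s_{n+1,j}$ for $0\le j\le p_{n+1}-2$, is the intended reading of \eqref{eq:flat_step} (which as printed contains the out-of-range index $s_{n,p_n}$), and it does chain across successive $n$ to give a constant return time to the base of tower $N$.
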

\begin{proof}
  Let $p$ be a period of $B_\infty$, and $n$ large enough so that $h_n>p$. Let $a$ be the number of terminal spacers in $B_n$, so that $B_n$ can be written $B_n'01^a$. Consider two successive occurrences of $B_n$ inside $B_\infty$, and let $s$ be the number of spacers between them. Then, since $B_n$ starts with a $0$, we can see inside $B_\infty$ the sequence $B_n'01^{a+s}0$. By periodicity of $B_\infty$, the pattern $01^{a+s}0$ appears at a definite position (depending only on $p$) inside $B'n$. It follows that $s$ is determined by $B'_n$, therefore the number of spacers between two successive occurrences of $B_n$ is always the same. In other words, the return-time to the basis of tower~$n$ is constant, and $\tcs$ is isomorphic to an odometer.
\end{proof}

As an immediate consequence, we get that the symbolic model is isomorphic to $\tcs$ provided that the latter has finitely many eigenvalues, which we henceforth assume. To lighten the notations, we now write $T$ for $\ts$.

\medskip

Let us define a metric $d$ associated to the topology of weak convergence on the space of all probability measures on $\Omega$: Let $(C_n)_{n\ge0}$ be the countable family of cylinder sets in $\{0,1\}^\Z$, and set
$$ d(\nu_1,\nu_2)\egdef \sum_{n\ge0}\dfrac{1}{2^n}\left|\nu_1(C_n)-\nu_2(C_n)\right|. $$
For $\omega\in\Omega$ and $\ell\ge1$, let $\eta_\ell(\omega)$ be the \emph{empirical} measure
$$ \eta_\ell(\omega)\egdef \dfrac{1}{\ell} \sum_{j=0}^{\ell-1} \delta_{T^j\omega}. $$
It is easy to show from the definition of $\mu$ that
\begin{equation}
  \label{eq:unif_conv_on_B_n}
  \sup_{\omega\in\Omega:\ \omega|_0^{h_n}=B_n} d\left(\eta_{h_n}(\omega),\mu\right) \tend{n}{\infty}0.
\end{equation}

The measure $\mu$ need not be the unique ergodic invariant probability on $(\Omega,T)$. Indeed, the sequence $\bun\egdef\ldots 111\ldots$ which is fixed by $T$ may belong to $\Omega$. This is of course the case if the parameters $s_{n,i}$ are unbounded, but this may also hold in the case of bounded parameters, if spacers accumulate on the last subcolumn of each tower (think of the historic construction of Chacon in which $B_{n+1}=B_nB_n1$). In any case, there are at most two ergodic invariant probability measures: $\mu$ and $\delta_{\bun}$. The only nonatomic invariant probability measure is always $\mu$.

If $\bun\not\in \Omega$, the symbolic model is uniquely ergodic, hence Sarnak's conjecture holds for this model as soon as $T^p$ and $T^q$ are disjoint for any different prime numbers $p$ and $q$. But if $\bun\in \Omega$, we have to work a little harder to see that Sarnak's conjecture is valid for the symbolic model. We will get the result at the cost of an additional hypothesis on the construction. 

\subsection{A combinatorial result for ``reasonable" rank-one constructions}

\begin{Def}
 \em We say that the rank-one construction is \emph{reasonable} if
\begin{equation}
  \label{eq:hyp_spacers}
 t_n\egdef\sup_{0\le i \le p_n-1} s_{n,i}=o(h_n)\quad\mbox{ as }n\to\infty.
\end{equation}
\end{Def}

We assume henceforth that the above assumption is satisfied. Using the fact that, for all $n\ge 1$, $h_{n+1}\ge 2h_n$, it easily follows that 
\begin{equation}
  \label{eq:sum of spacers}
  \dfrac{t_1+\cdots+t_n}{h_n}\tend{n}{\infty}0.
\end{equation}

\bigskip

Inside the word $B_m$, we distinguish several orders of spacers (that is, of occurrences of the symbol $1$). We say that a specific spacer inside $B_m$ is of order $n\le m$ if this spacer lies inside a copy of $B_n$ but not inside a copy of $B_{n-1}$. For example, if we take the classical Chacon transformation, with $B_{n+1}=B_nB_n1B_n$, we have 
$$ B_3=0010\,0010\, 1\, 0010, $$
where the third occurrence of the symbol $1$ is of order 3, whereas all others are of order $2$.

The following lemma is an easy consequence of the structure of the building blocks.

\begin{Lemma}
\label{lemma:0 of B_m}
  Let $1\le n<m$. Every symbol $0$ inside $B_m$ lies in a copy of $B_n$ entirely contained in $B_m$.
\end{Lemma}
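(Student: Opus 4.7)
The plan is to proceed by induction on $m$, starting from $m=n+1$. The base case is immediate from the recursive definition $B_{n+1} = B_n 1^{s_{n,0}} B_n 1^{s_{n,1}}\cdots B_n 1^{s_{n,p_n-1}}$: every position inside $B_{n+1}$ is either a spacer (symbol $1$) or lies in one of the $p_n$ explicit copies of $B_n$ that appear in this concatenation, and each of those copies is by construction entirely contained in $B_{n+1}$.

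For the inductive step, assume the statement holds for some $m\ge n+1$, and consider a $0$ inside $B_{m+1} = B_m 1^{s_{m,0}} B_m 1^{s_{m,1}}\cdots B_m 1^{s_{m,p_m-1}}$. Since the spacers appearing in this concatenation contribute only symbols $1$, the given $0$ must lie inside one of the displayed copies of $B_m$, call it $\widetilde{B}_m$, and $\widetilde{B}_m$ is entirely contained in $B_{m+1}$. Applying the inductive hypothesis inside $\widetilde{B}_m$, the $0$ lies inside a copy of $B_n$ entirely contained in $\widetilde{B}_m$, hence entirely contained in $B_{m+1}$. This closes the induction.

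There is no real obstacle here: the lemma is essentially a tautology about the recursive construction of building blocks, and the only thing to be careful about is noting that spacers inserted at each stage are pure strings of $1$'s, so a $0$ can only be inherited from an embedded copy of $B_m$, which itself contains the required copy of $B_n$ by induction.
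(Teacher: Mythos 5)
Your induction is correct and is exactly the natural formalization of what the paper leaves implicit (the paper states this lemma without proof, calling it ``an easy consequence of the structure of the building blocks''). The base case and the observation that newly inserted spacers contribute only $1$'s, forcing any $0$ into an embedded copy of $B_m$, are precisely the right points to make.
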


\begin{Lemma}
\label{lemma:between spacers}
Let $1\le n<m$. If two spacers inside $B_m$ are both of order $\ge n+1$, and if there exists between them a symbol $0$, then we can find between them at least one occurrence of the building block $B_n$.  
\end{Lemma}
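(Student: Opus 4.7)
The plan is to exploit the canonical decomposition of $B_m$ into disjoint copies of $B_n$ separated by groups of spacers, which is directly encoded in the inductive definition $B_{k+1}=B_k 1^{s_{k,0}}\cdots B_k 1^{s_{k,p_k-1}}$. Iterating this definition down from level $m$ to level $n$, we obtain $B_m$ as a concatenation of copies of $B_n$ interleaved with spacer blocks, and by the very definition of the order of a spacer, a spacer in $B_m$ has order $\ge n+1$ if and only if it lies \emph{outside} every one of these copies of $B_n$.

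Now fix two spacers at positions $i_1<i_2$ in $B_m$, both of order $\ge n+1$, and a position $j$ with $i_1<j<i_2$ such that the symbol at position $j$ is $0$. By Lemma~\ref{lemma:0 of B_m}, this $0$ lies in a copy of $B_n$ entirely contained in $B_m$; denote by $[a,a+h_n)$ the positions occupied by this copy, so that $a\le j<a+h_n$.

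The key step is to check that this copy of $B_n$ sits strictly between $i_1$ and $i_2$. Since the spacer at position $i_1$ is of order $\ge n+1$, it does not lie in $[a,a+h_n)$. If we had $i_1\ge a$, then $a\le i_1<j<a+h_n$ would force $i_1\in[a,a+h_n)$, a contradiction; hence $i_1<a$. Symmetrically, from $i_2>j\ge a$ together with $i_2\notin[a,a+h_n)$ we obtain $i_2\ge a+h_n$. Therefore the copy of $B_n$ occupying $[a,a+h_n)$ lies entirely between the two spacers, which is exactly the claim. There is no real obstacle here once Lemma~\ref{lemma:0 of B_m} is in hand; the argument is essentially a bookkeeping lemma about the recursive structure of the building blocks.
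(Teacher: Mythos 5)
Your proof is correct and follows exactly the route the paper intends: the paper's own proof is the one-line remark that the lemma is a direct consequence of Lemma~\ref{lemma:0 of B_m}, and your argument simply fills in the (easy) details of why the copy of $B_n$ containing the intermediate $0$ must lie strictly between the two spacers of order $\ge n+1$.
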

\begin{proof}
  This is a direct consequence of Lemma~\ref{lemma:0 of B_m}.
\end{proof}

We will also need the following result to bound the number of consecutive spacers inside a building block.

\begin{Lemma}
\label{lemma:spacers inside B_(n+1)}
  Let $n\ge 1$, and $1^s$ a block of $s$ consecutive spacers inside $B_{n+1}$. Then 
  $$ s\le t_1+\cdots+t_n. $$
\end{Lemma}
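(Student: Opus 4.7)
My plan is to prove the bound by induction on $n$, exploiting the recursive structure
$$B_{n+1} = B_n\, 1^{s_{n,0}}\, B_n\, 1^{s_{n,1}}\, \cdots\, B_n\, 1^{s_{n,p_n-1}}.$$
The key elementary fact I would record first is that every building block $B_n$ begins with the symbol $0$. This follows by an immediate induction from $B_1=0$ and the recursion, since each $B_{n+1}$ starts with a copy of $B_n$.

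The induction hypothesis I would carry is slightly rephrased: for every $n\ge 1$, every maximal run of consecutive spacers appearing inside $B_n$ has length at most $t_1+\cdots+t_{n-1}$ (which equals $0$, vacuously, when $n=1$). Applied to a maximal run $1^s$ lying inside $B_{n+1}$, the structure of $B_{n+1}$ leaves only three possibilities: either the run sits entirely inside one copy of $B_n$, or it sits entirely inside one of the inserted spacer blocks $1^{s_{n,i}}$, or it straddles the boundary between the end of one copy of $B_n$ and the next block $1^{s_{n,i}}$. In the first case, the induction hypothesis gives $s\le t_1+\cdots+t_{n-1}$; in the second, $s\le s_{n,i}\le t_n$; in the third, $s$ is the sum of the number of trailing spacers of that copy of $B_n$ (which is itself a run of spacers inside $B_n$, hence bounded by $t_1+\cdots+t_{n-1}$) and of $s_{n,i}\le t_n$. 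In every case $s\le t_1+\cdots+t_n$, closing the induction.

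The only point needing care — and essentially the single obstacle — is ruling out the possibility that a run of spacers in $B_{n+1}$ crosses a whole copy of $B_n$ (which would ruin the inductive count). This is exactly what the observation $B_n$ begins with $0$ prevents: after each inserted block $1^{s_{n,i}}$ the next copy of $B_n$ contributes an immediate $0$, forcing the maximal spacer run to terminate. So the structural lemma about the initial symbol of $B_n$ does all the work, and the rest is bookkeeping.
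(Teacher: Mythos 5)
Your proof is correct and follows essentially the same route as the paper's: an induction on $n$ in which an inserted spacer block $1^{s_{n,i}}$ (length at most $t_n$) can only be extended by the trailing spacers of the preceding copy of $B_n$ (length at most $t_1+\cdots+t_{n-1}$ by the induction hypothesis). Your explicit observation that $B_n$ begins with $0$, which is what terminates the run on the right, is a detail the paper leaves implicit but is the same argument.
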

\begin{proof}
  By the definition of $t_n$, the length of a block of consecutive spacers of order $(n+1)$ is bounded by $t_n$. Such a block may be adjacent to a block of consecutive spacers at the end of $B_n$, and the proof follows by an easy induction on $n$. 
\end{proof}

We are now ready to state and prove the following key proposition on the structure of words in the language of the subshift $\Omega$.

\begin{Prop}
\label{prop:structure of names}
  Fix a real number $\varepsilon>0$ and an integer $\ell\ge1$. Then there exists $N(\varepsilon,\ell)$ such that, for all integer $N\ge N(\varepsilon,\ell)$ and all $\omega\in\Omega$, the word $\omega|_1^N$ can be represented in the form 
  $$ \omega|_1^N = ABC, $$
  where the (possibly empty) words $A$, $B$, $C$ satisfy the following:
  \begin{itemize}
    \item $B=1^s$ for some $s\ge 0$,
    \item we can cover a \emph{large} part of $A$ and $C$ with disjoint building blocks of order $\ge \ell$, where ``large" means that the total number of letters in $A$ and $C$ which are not covered is bounded by $\varepsilon N$.
  \end{itemize}
\end{Prop}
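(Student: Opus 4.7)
The plan is to reduce everything to a structural analysis of the building blocks $B_M$, exploiting the reasonable hypothesis. If $\omega=\bun$, then $\omega|_1^N=1^N$ and the trivial decomposition $A=C=\emptyset$, $B=1^N$ works for every $N$. So I would assume $\omega\neq\bun$, in which case every finite subword of $\omega$ sits inside some $B_M$.

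Using \eqref{eq:sum of spacers}, I first pick $n\ge\ell$ large enough so that $t_n/h_n$ and $(t_1+\cdots+t_n)/h_n$ are both smaller than $\varepsilon/C$ for a suitable constant $C$, and then set $N(\varepsilon,\ell)\egdef Ch_{n+1}/\varepsilon$, so that for $N\ge N(\varepsilon,\ell)$ the word $\omega|_1^N$ contains many complete copies of $B_n$ coming from the canonical decomposition $B_M=B_nX_1B_nX_2\cdots B_nX_{r-1}B_n$, in which each $X_j$ is a (possibly empty) spacer run between consecutive copies of $B_n$. The key definition is then to take $B$ to be the \emph{longest} maximal block of consecutive spacers in $\omega|_1^N$, with $A,C$ the corresponding prefix and suffix.

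I would then treat two sub-cases. If $|B|\le t_n$, every spacer run in $\omega|_1^N$ has length at most $t_n$, and covering $\omega|_1^N$ by the complete $B_n$'s of the decomposition that lie inside it leaves uncovered only two truncated $B_n$'s at the endpoints (of total length $\le 2h_n$) together with a sum of short spacer runs of total length $\le (N/h_n)\cdot t_n$, both smaller than $\varepsilon N/2$ by the choice of $n$ and $N$. In the harder sub-case $|B|>t_n$, the run $B$ must correspond to a high-level spacer boundary inside $B_M$, say between two adjacent copies of some $B_K$ with $K\ge n+1$; removing $B$ splits $\omega|_1^N$ into $A$ and $C$, each living inside the corresponding $B_K$ and its neighbouring blocks, and I would cover them by the complete $B_n$'s (and possibly higher-order building blocks $B_{n+1},B_{n+2},\ldots$) lying inside.

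The step I expect to be the main obstacle is bounding the total length of the \emph{other} long spacer runs that remain in $A\cup C$ after $B$ has been absorbed. The key structural fact I plan to use, available whenever $p_1\ge 2$, is that any zero of $B_M$ can be adjacent to at most one ``long'' spacer run (of level $\ge 2$ in its tree-structure), because the conditions ``first zero of an ancestor $B_k$'' and ``last zero of an ancestor $B_k$'' are mutually exclusive for $k\ge 2$. Iterating this, consecutive long spacer runs in $B_M$ are separated by at least one complete copy of $B_{n+1}$, so their number inside $\omega|_1^N$ is at most $N/h_{n+1}$. Combined with the bound $t_1+\cdots+t_k=o(h_k)$ on their individual lengths and with the nested structure of $B_M$, a careful summation using the reasonable hypothesis should give the desired bound $\varepsilon N$ on the total uncovered portion.
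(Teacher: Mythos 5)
Your overall strategy is the same as the paper's: isolate one exceptional spacer run as $B$ and cover the remainder of $\omega|_1^N$ by building blocks of order $\ge\ell$, controlling the uncovered spacers via the reasonable hypothesis. Your choice of $B$ as the \emph{longest} maximal spacer run, rather than the paper's choice (the unique run containing a spacer of order $\ge n_0+2$, where $n_0$ is the largest order of a building block visible in $W$ --- uniqueness coming from Lemma~\ref{lemma:between spacers}), is workable: if the two differ, the high-order run is itself of length at most $t_1+\cdots+t_{n_0}\le \varepsilon h_{n_0}/4$ and can be absorbed into the error. Your first sub-case ($|B|\le t_n$) is also fine.

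The genuine gap is exactly at the step you flag. The estimate you propose --- at most $N/h_{n+1}$ long runs, each of individual length bounded via $t_1+\cdots+t_k=o(h_k)$ --- does not close: a remaining run of order $k+1$ with $n+1<k\le n_0$ has length bounded only by $t_1+\cdots+t_k\le\varepsilon h_k/4$, which is $o(h_k)$ but not $o(h_{n+1})$, so the product (count)$\times$(max length) can be of order $\varepsilon N h_{n_0}/h_{n+1}\gg\varepsilon N$; and a level-by-level refinement (runs of top order $k+1$ are separated by copies of $B_k$, hence number $\le N/h_k$, length $\le\varepsilon h_k/4$) still only yields $\sum_k \varepsilon N/4 \approx (n_0-n)\,\varepsilon N/4$, which is unbounded. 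What is missing is the \emph{charging} argument that the paper gets from its nested decomposition $W=W_\ell^S\,W_\ell\cdots W_{n_0}\cdots W'_\ell\,W_\ell^P$: runs of order $k+1$ occur only in the ``fringe'' pieces $W_k,W'_k$, where each such run is adjacent to a \emph{full} copy of $B_k$ of the decomposition; charging each run to that neighbour (whose length $h_k$ exceeds the run's length by a factor $\ge 4/\varepsilon$), with each block charged at most twice, gives a total of $2N\sup_{k\ge\ell}(t_1+\cdots+t_k)/h_k<\varepsilon N/2$ because the charged blocks are disjoint subwords of $W$. The key point is that the relevant count of order-$(k+1)$ runs is the number of $B_k$'s in the fringe at level $k$ --- and these fringes are disjoint across $k$ --- not $N/h_k$. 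Without this (or an equivalent) input, your ``careful summation'' cannot be completed. A minor additional point: a run of length $>t_n$ need not contain a spacer of order $\ge n+2$ (terminal spacers of $B_n$ accumulate up to $t_1+\cdots+t_{n-1}$ before the order-$(n+1)$ spacers), so your separation-by-$B_{n+1}$ claim requires the threshold $t_1+\cdots+t_n$ rather than $t_n$, via Lemma~\ref{lemma:spacers inside B_(n+1)}.
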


\begin{proof}
Replacing if necessary $\ell$ by a larger integer, we may assume by~\eqref{eq:sum of spacers} that 
\begin{equation}
  \label{eq:ell large enough}
  \sup_{n\ge \ell} \dfrac{t_1+\cdots+t_n}{h_n} < \varepsilon/4. 
\end{equation}
Now, fix $\omega\in\Omega$, $N\ge1$, and consider the word $W\egdef \omega|_1^N$. 
We know by the definition of $\Omega$ that $W$ is a subword of the building block $B_m$ for some $m\ge0$. In general, we can see several copies of $W$ inside $B_m$, but we consider one particular occurrence of $W$ in $B_m$. By the inductive construction of the building blocks, for any $1\le n\le m$, $B_m$ is canonically decomposed into building blocks $B_n$ and spacers of order at least $n+1$.  When considering building blocks $B_n$ inside $W$, we will implicitly assume that these building blocks come from this canonical decomposition. And since we have fixed one particular instance of $W$ in $B_m$, any spacer in $W$ also has a well-defined order $n\le m$, which is inherited from the corresponding spacer inside $B_m$.

It might happen that we cannot find in $W$ any building block $B_\ell$. By Lemma~\ref{lemma:0 of B_m}, this implies that any symbol $0$ inside $W$ is either in the first $h_\ell-1$ letters of $W$ or in the last $h_\ell-1$ letters of $W$. Hence, in this case we can write $W$ in the form
$$ W = A \, 1^s \, C, $$
where $|A|<h_\ell$, $|C|<h_\ell$, and $s\ge 0$. The proposition easily follows in this case, by taking $N$ large enough so that $h_\ell/N<\varepsilon/2$.

\medskip

Otherwise, let $n_0\ge\ell$ be the largest order such that we can see at least one copy of $B_{n_0}$ inside $W$. Taking as many copies of $B_{n_0}$ as we can inside $W$, we decompose $W$ in the form
$$ W=W_{n_0}^S\,  W_{n_0}\, W_{n_0}^P, $$
where $W_{n_0}$ is covered by copies of $B_{n_0}$ and spacers of order $\ge n_0+1$, $W_{n_0}^S$ is a (possibly empty) suffix of $B_{n_0}$, and $W_{n_0}^P$ is a (possibly empty) prefix of $B_{n_0}$. Then, we consider inside 
$W_{n_0}^S$ and $W_{n_0}^P$ all possible copies of $B_{n_0-1}$. This yields a decomposition of the form
$$ W=W_{n_0-1}^S\, W_{n_0-1}\, W_{n_0}\, W'_{n_0-1}\, W_{n_0-1}^P, $$
where $W_{n_0-1}$ and $W'_{n_0-1}$ are covered by copies of $B_{n_0-1}$ and spacers of order $n_0$, and $W_{n_0-1}^S$  (respectively $W_{n_0-1}^P$) is a suffix (respectively prefix) of $B_{n_0-1}$. 

Going on in the same way and considering successively the building blocks $B_{n_0-2}$, $B_{n_0-3},\ldots$, $B_\ell$, we finally get a decomposition of the form
$$ W=W_\ell^S \, W_\ell \, W_{\ell+1} \ldots W_{n_0-1}\, W_{n_0}\, W'_{n_0-1} \ldots W'_{\ell+1} \, W'_\ell \, W_\ell^P, $$
where
\begin{itemize}
  \item $W_{n_0}$ is covered by copies of $B_{n_0}$ and spacers of order $\ge n_0+1$,
  \item for each $\ell\le n\le n_0-1$, $W_n$ and $W'_n$ are covered by copies of $B_n$ and spacers of order $n+1$,
  \item $W_\ell^S$ is a suffix of $B_\ell$,
  \item $W_\ell^P$ is a prefix of $B_\ell$.
\end{itemize}
Note that, in this decomposition, $W_n$  ($\ell\le n\le n_0-1$) may be empty,  and it may also be reduced to a single block of spacers of order $n+1$.  In this latter case, its length is bounded by $t_n$. (The same holds for $W'_n$.)

Putting aside $W_\ell^S$ and $W_\ell^P$, whose lengths are always bounded by $h_\ell$, the rest of $W$ can be viewed as a concatenation of building blocks of order $\ge \ell$, separated by blocks of consecutive spacers. Let us say that a block of consecutive spacers is \emph{huge} if it contains a spacer of order $\ge n_0+2$. By Lemma~\ref{lemma:between spacers}, there can exist at most one huge block of spacers (otherwise we could have found in $W$ a building block of order $\ge n_0+1$).  

We claim that the sum of all lengths of blocks of spacers which are not huge represents a small fraction of $N$. 

Indeed, consider a copy of the building block $B_n$ appearing in some $W_n$, $\ell\le n\le n_0-1$. By the construction of the decomposition of $W$, the word $W_\ell^S \, W_\ell \, W_{\ell+1} \ldots W_{n}$ is subword of $B_{n+1}$. We then get by Lemma~\ref{lemma:spacers inside B_(n+1)} that the length of the block of spacers immediately to the left of our copy of $B_n$ is  at most $t_{1}+\cdots+t_n$, which is small with respect to $h_n=|B_n|$ by~\eqref{eq:ell large enough}. In the same way, if $B_n$ appears in some $W'_n$, $\ell\le n\le n_0-1$, then the length of the block of spacers immediately to its right is bounded by the same quantity. Now, consider a copy of $B_{n_0}$ appearing in $W_{n_0}$. If the block of spacers immediately to its left is not the huge one, then 
\begin{itemize}
  \item the part of this block of spacers contained inside $W_{n_0}$ has a length bounded by $t_{n_0}$,
  \item the possible part of this block of spacers contained inside $W_\ell^S \, W_\ell  \ldots W_{n_0-1}$ has, again by  Lemma~\ref{lemma:spacers inside B_(n+1)}, a length bounded by $t_1+\cdots+t_{n_0-1}$. 
\end{itemize}
The same applies to  the block of spacers immediately to its right: If this block of spacers is not the huge one, then its length is bounded by $t_1+\cdots+t_{n_0}$, which is small compared to $h_{n_0}$ by~\eqref{eq:ell large enough}. 
We finally get that the sum of the lengths of all non-huge blocks of spacers separating the building blocks in the decomposition of $W$ is bounded by
$$ 2N\sup_{n\ge \ell}\dfrac{t_1+\cdots+t_n}{h_n}, $$
which is smaller than $N \varepsilon/2$ by~\eqref{eq:ell large enough}. Therefore, the proposition follows by taking $N$ so large that $h_\ell/N<\varepsilon/4$, and $B$ as the possible huge block of spacers.  
\end{proof}

\subsection{Sarnak's conjecture for symbolic models of rank-one automorphisms}

\begin{Th}
\label{thm:Sarnak for symbolic WM}
  Let $(\Omega,T)$ be the symbolic model for a reasonable, weakly mixing, rank-one construction. Let $\mu$ be the unique non-atomic $T$-invariant probability measure. Assume that for all different prime numbers $p$ and $q$, $(\Omega,T^p,\mu)$ and $(\Omega,T^q,\mu)$ are disjoint. Then Sarnak's conjecture is valid for $(\Omega,T)$.
\end{Th}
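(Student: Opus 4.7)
The plan is to apply the Bourgain--Sarnak--Ziegler (BSZ) criterion of \cite{Bo-Sa-Zi} to a well-chosen modification of $f$, and to handle the ``defect'' coming from the non-unique ergodicity (i.e.\ from the fixed point $\bun$) combinatorially via Proposition~\ref{prop:structure of names}. The case $x=\bun$ is immediate: $f(T^n\bun)=f(\bun)$ is constant and $\frac{1}{N}\sum_{n=1}^N\mob(n)\to 0$ by the prime number theorem, so I henceforth assume $x\neq\bun$.

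Fix $\eta>0$. Since $\mu$ is non-atomic, $\mu(V_k)\to 0$ as $k\to\infty$, where $V_k\egdef\{y\in\Omega:y_j=1\text{ for }|j|\leq k\}$ is a clopen neighborhood of $\bun$. I would choose $k$ so large that both $\sup_{y\in V_k}|f(y)-f(\bun)|<\eta$ and $\mu(V_k)<\eta$, and set $\chi\egdef\ind{V_k}$, which is continuous on $\Omega$ because $V_k$ is clopen. I would then decompose $f = g + c_1 + c_2\chi$, where the constants $c_1, c_2$ (solutions of a $2\times2$ linear system, uniformly bounded by a multiple of $\|f\|_\infty$ as soon as $\mu(V_k)<1/2$) are chosen so that $\int g\,d\mu = 0$ and $g(\bun)=0$.

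For the $g$-part, I would invoke the BSZ criterion. To verify its hypothesis pointwise in $x$, I would examine weak-$\ast$ limits of the empirical measures $\nu_N^x\egdef\frac{1}{N}\sum_{n=1}^N\delta_{(T^{pn}x,T^{qn}x)}$ on $\Omega^2$, for distinct primes $p,q$. Each such limit is $T^p\times T^q$-invariant, and its ergodic decomposition is supported on ergodic joinings. By the weak mixing of $T$, both $T^p$ and $T^q$ are $\mu$-ergodic; and since the only ergodic $T$-invariant probability measures on $\Omega$ are $\mu$ and $\delta_{\bun}$, the same list holds for $T^p$ and $T^q$. Combining the disjointness assumption (which forces $\mu\otimes\mu$ as the unique joining of $(\Omega,T^p,\mu)$ and $(\Omega,T^q,\mu)$) with the only possible joinings involving $\delta_{\bun}$, the ergodic $T^p\times T^q$-invariant probability measures on $\Omega^2$ are $\mu\otimes\mu$, $\mu\otimes\delta_{\bun}$, $\delta_{\bun}\otimes\mu$ and $\delta_{(\bun,\bun)}$. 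Since $\int g\,d\mu = 0 = g(\bun)$, $g\otimes\bar g$ integrates to $0$ against each, hence $\frac{1}{N}\sum_{n\leq N} g(T^{pn}x)\overline{g(T^{qn}x)}\to 0$ for every $x$. BSZ then yields $\frac{1}{N}\sum g(T^n x)\mob(n)\to 0$, and the $c_1$-contribution is trivially $o(1)$ by the prime number theorem.

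The main obstacle is to bound $\frac{1}{N}\sum\chi(T^n x)\mob(n)$ pointwise in $x$, since the BSZ criterion fails for $\chi$ itself (at $x=\bun$ one has $\chi(T^n\bun)=1$ for every $n$). Here I would invoke Proposition~\ref{prop:structure of names}, with $\epsilon$ small and $\ell$ large enough that~\eqref{eq:unif_conv_on_B_n} guarantees $\frac{1}{h_m}\#\{1\leq j\leq h_m:T^j\omega\in V_k\}\leq \mu(V_k)+\eta$ for every $m\geq\ell$ and every $\omega$ starting with $B_m$. Writing $x|_1^N = ABC$ with $B=1^s$, on the ``interior'' of $B$ (positions at distance $>k$ from both endpoints), $\chi(T^n x)=1$, so this contribution reduces to a Möbius sum over an interval and is $o(N)$ uniformly; the boundary of $B$ contributes $O(k)$. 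On $A\cup C$, the parts covered by building blocks of order $\geq\ell$ contribute in absolute value at most $(|A|+|C|)(\mu(V_k)+\eta)\leq 2\eta N$, while the uncovered parts contribute at most $2\epsilon N$. Combining these estimates and letting $\eta,\epsilon\to 0$ (with $k,\ell$ taken correspondingly large) gives $\frac{1}{N}\sum f(T^n x)\mob(n)\to 0$, completing the proof.
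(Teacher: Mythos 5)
Your proposal is correct and follows essentially the same route as the paper's proof: modify $f$ near the fixed point $\bun$ so that the Bourgain--Sarnak--Ziegler criterion applies (classifying the weak limits of the empirical measures via the ergodicity of $\mu$ for $T^p,T^q$, the fact that $\mu$ and $\delta_{\bun}$ are the only ergodic measures, and the disjointness hypothesis), and then control the leftover term $\frac1N\sum_n \mob(n)\ind{V_k}(T^n x)$ by Proposition~\ref{prop:structure of names} together with \eqref{eq:unif_conv_on_B_n} and $\sup_{N_1\le N_2\le N}|\sum_{N_1\le n\le N_2}\mob(n)|=o(N)$. The only (harmless) cosmetic differences are your two-sided cylinder $V_k$ in place of $G_k$ and your decomposition $f=g+c_1+c_2\chi$, which achieves $\int g\,d\mu=0=g(\bun)$ exactly and thus lets you use the clean form of the criterion instead of the quantitative $\varepsilon$-version used in the paper.
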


\begin{proof}
  Let $f\in C(\Omega)$ and $\omega_0\in\Omega$. We have to prove the orthogonality of the sequence $\bigl(f(T^n\omega_0)\bigr)_{n\ge1}$ with the Möbius function. Since $\sum_{n\le N} \mob(n)=o(N)$, we can assume without loss of generality that $\int_\Omega f\,d\mu=0$.  
  
  As we have already noticed, if $T$ is uniquely ergodic the result is a straightforward consequence of Bourgain-Sarnak-Ziegler's criterion. 
  But if $\bun\in \Omega$ (which we assume henceforth), there exists a second ergodic invariant measure $\delta_{\bun}$, and the criterion does not apply directly. As it will appear more clearly in the further part of the proof, the value of $f$ at the fixed point $\bun$ may play an important role in the estimations, and the situation is easier if $f(\bun)=0$.
  Unfortunately, substracting the value $f(\bun)$ from $f$ kills the centering assumption on $f$, which is essential in the joining argument. We have therefore to find a trade-off between the centering assumption and the annihilation of $f(\bun)$. 
  
  For all $k\ge 1$, let $G_k$ be the cylinder set 
  $$ G_k \egdef \{\omega\in\Omega:\ \omega_i=1,\ 0\le i\le k-1\}. $$
  Of course, $\bun\in G_k$ for all $k$, and we have $\mu(G_k)\tend{k}{\infty}0$. Now, we set $f_k\egdef f-f(\bun)\ind{G_k}$, so that 
  \begin{itemize}
    \item $f_k$ remains continuous,
    \item $f_k(\bun)=0$,
    \item $|\int_\Omega f_k\,d\mu| = |f(\bun)|\, \mu(G_k) \tend{k}{\infty}0$.
  \end{itemize}
  
  We will apply Bourgain-Sarnak-Ziegler's criterion to $f_k$ and $\omega_0$ in the following form: It is proved in \cite{Bo-Sa-Zi} that, if for some $\varepsilon>0$ and for all different prime numbers $p$ and $q$ less than $\exp(1/\varepsilon)$, we have
  \begin{equation}
    \label{eq:f_k limit}
    \limsup_{N\to\infty}\dfrac{1}{N} \left| \sum_{n=1}^N f_k (T^{pn}\omega_0) f_k(T^{qn}\omega_0) \right| < \varepsilon,
  \end{equation}
  then 
  \begin{equation}
    \label{eq:almost orthogonality for f_k}
    \limsup_{N\to\infty}\dfrac{1}{N} \left| \sum_{n=1}^N \mob(n) f_k(T^{n}\omega_0) \right| < 2\sqrt{\varepsilon\log1/\varepsilon}.
  \end{equation}
  
  To get~\eqref{eq:f_k limit}, it is enough to show that if $\rho$ is the weak limit of a subsequence of the sequence of empirical probability measures
  $$ \dfrac{1}{N} \sum_{n=1}^N \delta_{\left(T^{pn}\omega_0,T^{qn}\omega_0\right)}, $$
  then 
  $$ \left| \int_{\Omega\times \Omega} f_k(\omega_1) f_k(\omega_2) \, d\rho(\omega_1,\omega_2) \right| < \varepsilon. $$
  So, let $\rho$ be such a probability measure on $\Omega\times \Omega$. Then $\rho$ is $T^p\otimes T^q$-invariant. Denote by $\rho_1$ (respectively $\rho_2$) its marginal distribution on the first (respectively second) coordinate. The probability measure $\frac{1}{p} \sum_{i=0}^{p-1} \rho_1 \circ T^i $ is $T$-invariant on $\Omega$, hence there exists $\theta\in[0,1]$ such that 
  \begin{equation}
    \label{eq:T-invariant proba}
    \dfrac{1}{p} \sum_{i=0}^{p-1} \rho_1 \circ T^i = \theta \delta_{\bun} + (1-\theta)\mu.
  \end{equation}

  Set $\Omega'\egdef \Omega\setminus\{\bun\}$. Let us decompose $\rho$ as
  \begin{multline}
  \label{eq:decomposition of rho}
    \rho = \rho(\Omega'\times \Omega')\,\rho(\,\cdot\,|\Omega'\times \Omega') + \rho(\Omega'\times \{\bun\})\,\rho(\,\cdot\,|\Omega'\times \{\bun\})\\ +\rho(\{\bun\}\times \Omega')\,\rho(\,\cdot\,|\{\bun\}\times \Omega') +\rho(\{(\bun,\bun)\})\,\rho(\,\cdot\,|\{(\bun,\bun)\}). 
  \end{multline}
  Here, $\rho(\,\cdot\,|\{(\bun,\bun)\})$ can not be anything else than $\delta_{\{(\bun,\bun)\}}$.
  Assume that $\rho(\Omega'\times \{\bun\})>0$. Then, we see by~\eqref{eq:T-invariant proba} that the first marginal of $\rho(\,\cdot\,|\Omega'\times \{\bun\})$ is absolutely continuous with respect to $\mu$. But it is also $T^p$-invariant, and since $\mu$ is $T^p$-ergodic, this first marginal has to be $\mu$. Therefore, in~\eqref{eq:decomposition of rho}, we can replace $\rho(\,\cdot\,|\Omega'\times \{\bun\})$ by $\mu\otimes\delta_{\bun}$. By a similar argument, we can replace $\rho(\,\cdot\,|\{\bun\}\times \Omega')$ by $\delta_{\bun}\otimes\mu$. In the same way, we also prove that, if $\rho(\Omega'\times \Omega')>0$, then both marginals of $\rho(\,\cdot\,|\Omega'\times \Omega')$ are equal to $\mu$. It follows that $\rho(\,\cdot\,|\Omega'\times \Omega')$ is a joining of $(\Omega,T^p,\mu)$ and $(\Omega,T^q,\mu)$. But we have assumed that these two systems are disjoint, hence this measure must be the product measure $\mu\otimes\mu$.
  Finally, \eqref{eq:decomposition of rho} becomes
  \begin{multline}
  \label{eq:decomposition of rho bis}
    \rho = \rho(\Omega'\times \Omega')\,\mu\otimes\mu + \rho(\Omega'\times \{\bun\})\,\mu\otimes\delta_{\bun} \\ +\rho(\{\bun\}\times \Omega')\, \delta_{\bun}\otimes\mu+\rho(\{(\bun,\bun)\})\,\delta_{\{(\bun,\bun)\}}. 
  \end{multline}
 
  Remembering that $f_k(\bun)=0$, it follows that 
  $$ \left| \int_{\Omega\times \Omega} f_k(\omega_1) f_k(\omega_2) \, d\rho(\omega_1,\omega_2) \right| \le \left( \int_\Omega f_k\, d\mu\right)^2, $$
  which goes to $0$ as $k\to\infty$. Hence, \eqref{eq:almost orthogonality for f_k} is valid for all large enough $k$.
  
  Let us now come back to $f$. For all $k\ge1$, we have
  $$ \dfrac{1}{N} \left| \sum_{n=1}^N \mob(n) f(T^{n}\omega_0) \right| \le \dfrac{1}{N} \left| \sum_{n=1}^N \mob(n) f_k(T^{n}\omega_0) \right| + \dfrac{|f(\bun)|}{N} \left| \sum_{n=1}^N \mob(n) \ind{G_k}(T^{n}\omega_0) \right|,
  $$
  and it only remains to estimate the second term in the RHS for large $k$. 
  
  Consider $k$ so large that $\mu(G_k)<\varepsilon$. Then, by~\eqref{eq:unif_conv_on_B_n}, we can fix $\ell$ large enough so that, for each $n\ge \ell$ and all $j$,
  \begin{equation}
    \label{eq:on building blocks}
    \omega|_j^{j+h_n}=B_n \Longrightarrow \sum_{i=j}^{j+h_n-1} \ind{G_k}(T^i\omega) < \varepsilon\, h_n.
  \end{equation}
  Then, we apply Proposition~\ref{prop:structure of names}: For $N$ large enough, we can find $1\le N_1\le N_2\le N$ such that 
  \begin{itemize}
    \item $\omega_0|_0^{N_1}$ and $\omega_0|_{N_2}^N$ are covered by building blocks of order $\ge \ell$ up to $\varepsilon\, N$ symbols,
    \item $\omega_0|_{N_1}^{N_2}=1^{N_2-N_1}$. 
  \end{itemize}
Then, by~\eqref{eq:on building blocks}, we get
$$ \left| \sum_{n=1}^N \mob(n) \ind{G_k}(T^n\omega_0) \right| \le 2N\,\varepsilon + k + 
   \left| \sum_{N_1\le n\le N_2-1-k} \mob(n)\right|. $$ 
The conclusion follows, since we can easily derive from $\left|\sum_{n\le N} \mob(n)\right|=o(N)$ that 
$$ \sup_{1\le N_1\le N_2\le N} \left| \sum_{N_1\le n\le N_2} \mob(n) \right| = o(N). $$
\end{proof}

Of course, Theorem~\ref{thm:Sarnak for symbolic WM} applies in the case of a weakly mixing bounded rank-one construction, for which we have proved that the spectral disjointness of different positive powers holds. But as we have seen in Theorem~\ref{thm:bounded rank one}, such a bounded construction may also have a finite number of eigenvalues (we exclude here the odometer case, where the symbolic model is not well defined). If non-trivial eigenvalues exist, Sarnak's conjecture for the symbolic model raises new difficulties in the non-uniquely ergodic case (that is, when $\bun\in\Omega$). Indeed, eigenfunctions are then discontinuous at the fixed point $\bun$. This is the reason why, before coming back to the symbolic model of rank-one transformations with a finite number of eigenvalues, we introduce an alternative topological model in which eigenfunctions are continuous. 

\medskip

Let $(\Omega,T)$ be the symbolic model of a weakly mixing, reasonable rank-one construction, and let $K\ge2$ be an integer. Consider the topological dynamical system $(\tilde\Omega, \tilde T)$ where 
  \begin{itemize}
    \item $\tilde\Omega\egdef\Omega\times\{0,\ldots,K-1\}$,
    \item $\tilde T(\omega,i)\egdef (\omega, i+1)$ if $i<K-1$, and $\tilde T(\omega, K-1)\egdef(T\omega,0)$.
  \end{itemize}
Let $\nu_K$ be the uniform distribution on $\{0,\ldots,K-1\}$, and $\tilde\mu\egdef \mu\otimes\nu_K$. Then $\tilde\mu$ is the unique nonatomic ergodic $\tilde T$-invariant probability measure on $\tilde\Omega$, and if $\bun\in\Omega$, there exists a second ergodic $\tilde T$-invariant probability measure which is $\delta_{\bun}\otimes\nu_K$. 

As an ergodic measurable dynamical system, $(\tilde\Omega,\tilde T,\tilde \mu)$ is rank one. It is not weakly mixing: It possesses $K$ eigenvalues $\varepsilon_K^j$, $0\le j\le K-1$, and the corresponding eigenfunctions are constant on each $\Omega_i\egdef\Omega\times\{i\}$, $0\le i\le K-1$ (in particular, they are continuous). 

\begin{Lemma}
  \label{lemma:joinings of tilde Omega}
  Assume that $1\le p<q$ are such that $\tilde T^p$ and $\tilde T^q$ are spectrally disjoint on the continuous part of the spectrum. Then any joining $\rho$ of $(\tilde\Omega,\tilde T^p,\tilde \mu)$ and $(\tilde\Omega,\tilde T^q,\tilde \mu)$ satisfies, for $f,g\in L^2(\tilde\mu)$
  $$ 
   \int_{\tilde \Omega\times \tilde\Omega} f(\tilde \omega_1) \, g(\tilde \omega_2)\,d\rho(\tilde \omega_1,\tilde \omega_2) = \sum_{0\le i,j\le K-1} \rho(\Omega_i\times\Omega_j)\, K^2\int_{\Omega_i}f\,d\tilde\mu \int_{\Omega_j}g\,d\tilde\mu.
  $$
\end{Lemma}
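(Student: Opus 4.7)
The plan is to split $f$ and $g$ according to the discrete/continuous spectral decomposition of $\tilde T$, and verify that only the discrete-discrete cross term contributes to the joining integral.

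For $h\in L^2(\tilde\mu)$, set $\pi h\egdef\sum_{i=0}^{K-1}\bigl(K\int_{\Omega_i}h\,d\tilde\mu\bigr)\ind{\Omega_i}$, the orthogonal projection of $h$ onto the $K$-dimensional subspace $L^2_d\egdef\mathrm{span}(\ind{\Omega_0},\dots,\ind{\Omega_{K-1}})$. Write $f_d\egdef\pi f$, $f_c\egdef f-f_d$, $g_d\egdef\pi g$, $g_c\egdef g-g_d$. Since $T$ is weakly mixing on $(\Omega,\mu)$, a direct fibrewise inspection of the eigenfunction equation $\phi\circ\tilde T=\lambda\phi$ on $\tilde\Omega=\Omega\times\{0,\dots,K-1\}$ shows that the pure-point part of the spectrum of $\tilde T$ is exactly $L^2_d$, with eigenvalues the $K$-th roots of unity. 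The maximal spectral type of $\tilde T$ restricted to $L^2_c\egdef(L^2_d)^\perp$ is therefore continuous, and since the image of a continuous measure under $z\mapsto z^r$ is still continuous, $\tilde T^r$ has purely continuous spectrum on $L^2_c$ for every $r\ge1$. Consequently the pure-point part of the spectrum of each $\tilde T^p$ and $\tilde T^q$ on $L^2(\tilde\mu)$ coincides with $L^2_d$, and $f_c,g_c\in L^2_c$.

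Let now $\rho$ be a joining of $(\tilde\Omega,\tilde T^p,\tilde\mu)$ and $(\tilde\Omega,\tilde T^q,\tilde\mu)$, and let $U\egdef U_{\tilde T^p\otimes\tilde T^q}$ act on $L^2(\rho)$. The maps $F\mapsto F\otimes 1$ and $H\mapsto 1\otimes H$ are isometric embeddings of $L^2(\tilde\mu)$ into $L^2(\rho)$ intertwining $U_{\tilde T^p}$ and $U_{\tilde T^q}$ respectively with $U$, hence preserving spectral types. Combining the hypothesis $\tilde T^p\spdj\tilde T^q$ on $L^2_c$ with the fact that pure-point and continuous spectral types are always mutually singular, and recalling that any maximal spectral type is symmetric under $z\mapsto\ov z$, we obtain pairwise singularity of the spectral types of the vectors in the pairs
$$(f_c\otimes 1,\,1\otimes\ov{g_c}),\quad(f_d\otimes 1,\,1\otimes\ov{g_c}),\quad(f_c\otimes 1,\,1\otimes\ov{g_d}).$$
The corresponding $U$-cyclic subspaces in $L^2(\rho)$ are therefore orthogonal, and since $\int f(\tilde\omega_1)g(\tilde\omega_2)\,d\rho=\langle f\otimes 1,\,1\otimes\ov g\rangle_{L^2(\rho)}$, we conclude
$$\int f_c g_c\,d\rho\;=\;\int f_d g_c\,d\rho\;=\;\int f_c g_d\,d\rho\;=\;0.$$

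The only surviving contribution is the elementary
$$\int f_d(\tilde\omega_1)g_d(\tilde\omega_2)\,d\rho \;=\; \sum_{0\le i,j\le K-1}\bigl(K\textstyle\int_{\Omega_i}f\,d\tilde\mu\bigr)\bigl(K\textstyle\int_{\Omega_j}g\,d\tilde\mu\bigr)\rho(\Omega_i\times\Omega_j),$$
which is exactly the announced formula. The one non-routine step is the identification of the pure-point part of the spectrum of each $\tilde T^r$ with $L^2_d$; once this is granted, the conclusion is the standard Koopman-joining deduction of orthogonality from spectral disjointness.
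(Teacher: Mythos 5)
Your proof is correct and follows essentially the same route as the paper: your projection $\pi$ is exactly the conditional expectation $\E_{\tilde\mu}[\,\cdot\,|\ck]$ onto the Kronecker factor used there, and the vanishing of the cross terms is deduced, as in the paper, from the spectral disjointness of the continuous parts together with the singularity of discrete versus continuous spectral measures. You are merely more explicit than the paper about the two mixed (discrete--continuous) terms and about the conjugation-symmetry of spectral types, which is a welcome clarification rather than a deviation.
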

\begin{proof}
  Let us denote by $\ck$ the partition of $\tilde\Omega$ into subsets $\Omega_i$, $0\le i\le K-1$. Then $\ck$ yields the Kronecker factor of $(\tilde\Omega,\tilde T,\tilde \mu)$, and if $f\in L^2(\tilde\mu)$, 
  $$ f-\E_{\tilde \mu}[f|\ck] = f-\sum_{0\le i\le K-1} \ind{\Omega_i} K\int_{\Omega_i}f\,d\tilde\mu $$
  has a continuous spectral measure (for $\tilde T$, $\tilde T^p$ and $\tilde T^q$). Let $\rho$ be a joining of $(\tilde\Omega,\tilde T^p,\tilde \mu)$ and $(\tilde\Omega,\tilde T^q,\tilde \mu)$, and let $f,g\in L^2(\tilde\mu)$. It follows from the assumption of the lemma that  the spectral measure of the functions $(\tilde\omega_1,\tilde\omega_2)\mapsto f(\omega_1)-\E_{\tilde \mu}[f|\ck](\tilde \omega_1)$ and $(\tilde\omega_1,\tilde\omega_2)\mapsto g(\omega_2)-\E_{\tilde \mu}[g|\ck](\tilde \omega_2)$ under the action of $\tilde T^p\otimes \tilde T^q$ are continuous and mutually singular, hence orthogonal in $L^2(\rho)$. We then get
  \begin{align*}
    \int_{\tilde \Omega\times \tilde\Omega} f(\tilde \omega_1) \, g(\tilde \omega_2)\,d\rho(\tilde \omega_1,\tilde \omega_2) 
  & = \int_{\tilde \Omega\times \tilde\Omega}\, \E_{\tilde \mu}[f|\ck](\tilde \omega_1) \E_{\tilde \mu}[g|\ck](\tilde \omega_2)\,d\rho(\tilde \omega_1,\tilde \omega_2) \\
  & = \sum_{0\le i,j\le K-1} \rho(\Omega_i\times\Omega_j)\, K^2\int_{\Omega_i}f\,d\tilde\mu \int_{\Omega_j}g\,d\tilde\mu.
  \end{align*}

\end{proof}

\begin{Th}
\label{thm:Sarnak for non WM}
  Let $(\Omega,T)$ be the symbolic model of a weakly mixing, reasonable rank-one construction. Fix $K\ge2$ and consider $\tilde\Omega$,  $\tilde T$ and $\tilde\mu$ as above. Assume that for all distinct prime numbers $p,q$, $\tilde T^p$ and $\tilde T^q$ are spectrally disjoint on the continuous part of the spectrum. 
Then Sarnak's conjecture is valid for $(\tilde\Omega,\tilde T)$.
\end{Th}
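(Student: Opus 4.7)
The plan is to adapt the proof of Theorem~\ref{thm:Sarnak for symbolic WM} to accommodate the Kronecker factor of order $K$. Let $f\in C(\tilde\Omega)$ with $\int_{\tilde\Omega}f\,d\tilde\mu=0$ and $\tilde\omega_0=(\omega_0,i_0)\in\tilde\Omega$. Since the Kronecker factor of $(\tilde\Omega,\tilde T,\tilde\mu)$ is generated by the continuous partition $\ck=\{\Omega_i\}_{0\le i\le K-1}$, I first split $f=f^{\text{pp}}+f^{\text{c}}$, where $f^{\text{pp}}(\omega,i)\egdef K\int_{\Omega_i}f\,d\tilde\mu$ is the projection on the Kronecker factor (continuous since it depends only on~$i$) and $f^{\text{c}}\egdef f-f^{\text{pp}}$ is continuous and satisfies $\int_{\Omega_i}f^{\text{c}}\,d\tilde\mu=0$ for each~$i$. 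Möbius orthogonality for $f$ will then follow from two separate arguments, one for $f^{\text{pp}}$ and one for $f^{\text{c}}$.

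The contribution of $f^{\text{pp}}$ is handled by classical number theory: since $f^{\text{pp}}(\tilde T^n\tilde\omega_0)$ depends only on $(i_0+n)\bmod K$, the Möbius sum $\frac1N\sum_{n\le N}\mob(n)\,f^{\text{pp}}(\tilde T^n\tilde\omega_0)$ is a finite linear combination of Möbius sums along arithmetic progressions of common difference $K$, each of which is $o(N)$ by Davenport's theorem.

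For $f^{\text{c}}$, I follow the Bourgain--Sarnak--Ziegler strategy of Theorem~\ref{thm:Sarnak for symbolic WM}, introducing the modified function $f^{\text{c}}_k\egdef f^{\text{c}}-\sum_{j=0}^{K-1}f^{\text{c}}(\bun,j)\ind{G_k\times\{j\}}$, which is continuous, vanishes on the periodic orbit $O\egdef\{(\bun,j):0\le j\le K-1\}$, and satisfies $\max_i|\int_{\Omega_i}f^{\text{c}}_k\,d\tilde\mu|=O(\mu(G_k))\to 0$ as $k\to\infty$. For distinct primes $p<q$ coprime to~$K$, $\tilde\mu$ is both $\tilde T^p$- and $\tilde T^q$-ergodic and the only other ergodic invariant probability for either power is the uniform measure on~$O$; for any weak limit $\rho$ of the empirical measures $\frac1N\sum_n\delta_{(\tilde T^{pn}\tilde\omega_0,\tilde T^{qn}\tilde\omega_0)}$, the averaging argument of Theorem~\ref{thm:Sarnak for symbolic WM} shows that the pieces of $\rho$ meeting $O$ in either coordinate contribute nothing to $\int f^{\text{c}}_k\otimes f^{\text{c}}_k\,d\rho$, while the piece on $\tilde\Omega^\circ\times\tilde\Omega^\circ$ (with $\tilde\Omega^\circ\egdef\tilde\Omega\setminus O$), once conditioned, is a joining of $(\tilde\Omega,\tilde T^p,\tilde\mu)$ and $(\tilde\Omega,\tilde T^q,\tilde\mu)$; by Lemma~\ref{lemma:joinings of tilde Omega}, its contribution is bounded by $K^2(\max_i|\int_{\Omega_i}f^{\text{c}}_k\,d\tilde\mu|)^2$, arbitrarily small for $k$ large. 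BSZ then gives Möbius orthogonality of $f^{\text{c}}_k$, and the passage from $f^{\text{c}}_k$ back to $f^{\text{c}}$ proceeds exactly as at the end of Theorem~\ref{thm:Sarnak for symbolic WM} via Proposition~\ref{prop:structure of names} applied to $\omega_0$, combined with the arithmetic-progression estimate $\sup_{j'}\sup_{1\le N_1\le N_2\le N}|\sum_{N_1\le n\le N_2,\,n\equiv j'\!\!\pmod K}\mob(n)|=o(N)$.

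The main obstacle I expect is to verify the BSZ correlation bound also for the finitely many small primes $p$ with $\gcd(p,K)>1$: for such~$p$, $\tilde\mu$ is no longer $\tilde T^p$-ergodic, and its $\tilde T^p$-ergodic decomposition splits into $\gcd(p,K)$ pieces each isomorphic to a smaller $(K/\gcd(p,K))$-level tower system; the joining argument has to be carried out ergodic piece by ergodic piece, using a variant of Lemma~\ref{lemma:joinings of tilde Omega} adapted to these smaller systems. Since the Kronecker sub-$\sigma$-algebra remains $\ck$ in all these pieces, the same bound $O((\max_i|\int_{\Omega_i}f^{\text{c}}_k\,d\tilde\mu|)^2)$ should hold piece by piece, leading to the same conclusion.
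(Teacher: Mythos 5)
Your proposal is correct and follows essentially the same route as the paper: the splitting $f=f^{\text{pp}}+f^{\text{c}}$ is exactly the paper's reduction to the case $\int_{\Omega_i}f\,d\tilde\mu=0$ for all $i$ (with the $K$-periodic part disposed of by classical Möbius orthogonality along arithmetic progressions), and your treatment of $f^{\text{c}}_k$ via the decomposition of $\rho$, the joining argument and Lemma~\ref{lemma:joinings of tilde Omega}, followed by the return to $f^{\text{c}}$ through Proposition~\ref{prop:structure of names}, matches the paper's proof step for step. The only divergence is your anticipated obstacle for primes $p$ with $\gcd(p,K)>1$: the paper sidesteps this entirely by invoking the remark in \cite{Bo-Sa-Zi} that the criterion tolerates failure of the correlation bound \eqref{eq:f_k limit} for finitely many small primes, so one may simply assume $p,q>K$ (guaranteeing ergodicity of $\tilde T^p$ and $\tilde T^q$ for $\tilde\mu$) and no ergodic-decomposition argument for the small primes is needed.
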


\begin{proof}
   
  We will adapt the proof of Theorem~\ref{thm:Sarnak for symbolic WM} to this setting. We fix $f\in C(\tilde\Omega)$ and $(\omega_0,i_0)\in\tilde\Omega$, and we want to prove the orthogonality of the sequence $\bigl(f(\tilde T^n(\omega_0,i_0)\bigr)_{n\ge1}$ with the Möbius function. For $i\in\{0,\ldots,K-1\}$, let $\Omega_i\egdef\Omega\times\{i\}\subset\tilde\Omega$.  We may, without loss of generality, assume that 
  \begin{equation}
    \label{eq:integrale sur Omega_i}
    \forall i\in\{0,\ldots,K-1\},\quad \int_{\Omega_i} f\,d\tilde\mu=0.
  \end{equation}
 Indeed, otherwise set $a_i\egdef\int_{\Omega_i} f\,d\tilde\mu$ and replace $f$ with $f'\egdef f-\sum_{0\le i\le K-1}a_i\, \ind{\Omega_i}$ (which amounts to substracting the discrete-spectrum part of $f$).  Then the sequence $\bigl(f(T^n\omega_0)\bigr)_{n\ge 1}$ only differs from $\bigl(f'(T^n\omega_0)\bigr)_{n\ge 1}$ by the $K$-periodic sequence $(a_{i_0+n})_{n\ge 1}$ (where $i_0+n$ is computed modulo $K$), now orthogonality between a $K$-periodic function and the Möbius function is already known (see {\it e.g.} \cite{Ap}, Chapter~7).

  If $\bun\in\Omega$, we cannot deal directly with $f$ and we therefore choose a large integer $k$ and consider the continuous function $f_k$ defined by
  $$ f_k\egdef f - \sum_{0\le i\le K-1}  f(\bun,i)\,\ind{G_k\times\{i\}}, $$
  where $G_k$ is defined as in the proof of Theorem~\ref{thm:Sarnak for symbolic WM}. Then $f_k$ satisfies 
  \begin{itemize}
    \item $f_k(\bun,i)=0$ for all $i\in\{0,\ldots,K-1\}$,
    \item $f_k(\omega,i)=f(\omega,i)$ whenever $\omega\notin G_k$.
  \end{itemize}

  Let $p\neq q$ be distinct prime numbers, and let $\rho$ be the weak limit of a subsequence of the sequence of empirical probability measures
  $$ \dfrac{1}{N} \sum_{n=1}^N \delta_{\left(\tilde T^{pn}(\omega_0,i_0),\tilde T^{qn}(\omega_0,i_0)\right)}. $$
  We want to bound
  $$ \left| \int_{\tilde\Omega\times \tilde\Omega} f_k(\omega_1,i_1) f_k(\omega_2,i_2) \, d\rho(\omega_1,i_1,\omega_2,i_2) \right|. $$
  Note that we can assume that $p>K$ and $q>K$, which ensures the ergodicity of $(\tilde\Omega, \tilde \mu, \tilde T^p)$ and $(\tilde\Omega, \tilde \mu, \tilde T^p)$. Indeed, it is remarked in \cite{Bo-Sa-Zi} that, provided $\varepsilon$ is small enough, \eqref{eq:almost orthogonality for f_k} still holds if \eqref{eq:f_k limit} fails only for small prime numbers. 
  We decompose $\rho$ in the same way as in~\eqref{eq:decomposition of rho}, but here $\Omega'$ has to be replaced by $\tilde\Omega'\egdef(\Omega\setminus\{\bun\})\times\{0,\ldots,K-1\}$, and $\{\bun\}$ has to be replaced by $\bun\times\{0,\ldots,K-1\}$. Since $f_k(\bun,i)=0$ for all $i\in\{0,\ldots,K-1\}$, only the part $\rho(\,\cdot\,|\tilde\Omega'\times\tilde\Omega')$ will contribute to the integral. 
  But the same argument as in the proof of Theorem~\ref{thm:Sarnak for symbolic WM} gives that $\rho'\egdef\rho(\,\cdot\,|\tilde\Omega'\times \tilde\Omega')$ is a joining of $(\tilde \Omega,\tilde T^p,\tilde \mu)$ and $(\tilde \Omega,\tilde T^q,\tilde \mu)$ (here we use the ergodicity of $(\tilde\Omega, \tilde \mu, \tilde T^p)$ and $(\tilde\Omega, \tilde \mu, \tilde T^p)$). By Lemma~\ref{lemma:joinings of tilde Omega}, we are left with a finite sum of products of integrals of the form
  $$ \int_{\Omega_i}f_k\,d\tilde\mu \int_{\Omega_j} f_k\,d\tilde\mu, $$ 
  which, thanks to \eqref{eq:integrale sur Omega_i}, can be made arbitrarily close to 0 by choosing $k$ large enough.
  
  To come back to our original function $f$, and to conclude as in the proof of Theorem~\ref{thm:Sarnak for symbolic WM}, it remains to bound 
  \begin{multline*}
    \dfrac{1}{N} \left| \sum_{n=1}^N \mob(n) \sum_{0\le i\le K-1}\ind{G_k\times\{i\}}(\tilde T^{n}(\omega_0,i_0) f(\bun,i) \right| \\
    = \dfrac{1}{N} \left| \sum_{n=1}^N \mob(n) \ind{G_k\times\{0,\ldots,K-1\}}(\tilde T^{n}(\omega_0,i_0) f(\bun,i_0+n) \right|,
  \end{multline*}
  where the sum $i_0+n$ must be understood modulo $K$. 
  By application of Proposition~\ref{prop:structure of names}, we are reduced to estimate a sum
  $\left| \sum_{N_1\le n\le N_2} \mob(n) \varphi(n) \right|, $
  where $\varphi:\ n\mapsto f(\bun,i_0+n)$ is a $K$-periodic function, and $1\le N_1\le N_2\le N$.
  But again, since we know that $\left| \sum_{n\le N} \mob(n) \varphi(n) \right| =o(N)$, we get
  $$ \sup_{1\le N_1\le N_2\le N} \left| \sum_{N_1\le n\le N_2} \mob(n) \varphi(n) \right|=o(N). $$
  \end{proof}

\begin{Th}
\label{thm:Sarnak for bounded rank one}
  Sarnak's conjecture holds for the symbolic model of any bounded rank-one construction which is not an odometer. 
\end{Th}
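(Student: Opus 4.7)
My plan is to use Theorem~\ref{thm:bounded rank one} to split into sub-cases and then invoke the two Sarnak-type theorems already proved in this section. Boundedness of $(p_n,s_{n,i})$ bounds $t_n$, and since $p_n\ge 2$ for all $n$ we have $h_n\ge 2^{n-1}\to\infty$, so $t_n=o(h_n)$ and the construction is reasonable. The hypothesis that the symbolic model is well defined is equivalent, by Lemma~\ref{lemma:periodicity}, to $T$ not being isomorphic to an odometer, so Theorem~\ref{thm:bounded rank one} leaves only two sub-cases: either $T$ is weakly mixing, or $T$ has finitely many eigenvalues forming a cyclic group of order $K\ge 2$.

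In the weakly mixing sub-case, Theorem~\ref{thm:bounded rank one} gives spectral disjointness of $T^p$ and $T^q$ on the continuous part of the spectrum, which here (on the orthocomplement of constants) is the entire spectrum; by the chain of implications~\eqref{eq:s3}, this upgrades to Furstenberg disjointness for every pair of distinct primes $p,q$, and Theorem~\ref{thm:Sarnak for symbolic WM} yields Sarnak's conjecture for $(\Omega,T)$.

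In the sub-case with $K$ rational eigenvalues, the plan is to place $(\Omega,T)$ inside the framework of Theorem~\ref{thm:Sarnak for non WM}. The description of eigenvectors as functions constant on tower levels of sufficiently high order (also from Theorem~\ref{thm:bounded rank one}) produces a coherent $\Z/K\Z$-coloring $c:\Omega\setminus\{\bun\}\to\Z/K\Z$ with $c\circ T=c+1\pmod K$, continuous off $\bun$ but discontinuous at $\bun$ when $\bun\in\Omega$, which is exactly the situation that Theorem~\ref{thm:Sarnak for non WM} was engineered to handle. From $T$ I would then construct a weakly mixing, bounded, reasonable rank-one $T'$ (concretely, $T^K$ restricted to the level set $c^{-1}(0)$, which is $T^K$-invariant and inherits a rank-one tower structure from $T$), together with a topological semi-conjugacy $\pi:(\tilde\Omega',\tilde T')\to(\Omega,T)$, where $\tilde\Omega'=\Omega'\times\{0,\ldots,K-1\}$ and $\tilde T'$ are defined as before Theorem~\ref{thm:Sarnak for non WM}; the map $\pi$ collapses the $K$-periodic orbit $\{(\bun',i):0\le i\le K-1\}$ of $\tilde T'$ onto the $T$-fixed point $\bun$. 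Since $\tilde T'\cong T$ as measure-preserving systems, spectral disjointness of $(\tilde T')^p$ and $(\tilde T')^q$ on the continuous part of the spectrum follows from the corresponding property for $T$ given by Theorem~\ref{thm:bounded rank one}, so Theorem~\ref{thm:Sarnak for non WM} applies to $(\tilde\Omega',\tilde T')$. Möbius orthogonality then transfers back along $\pi$: for any $f\in C(\Omega)$ and $\omega_0\in\Omega$, any lift $\tilde\omega_0\in\pi^{-1}(\omega_0)$ gives $f(T^n\omega_0)=(f\circ\pi)\bigl((\tilde T')^n\tilde\omega_0\bigr)$ with $f\circ\pi$ continuous on $\tilde\Omega'$.

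The hard part will be making the construction of $T'$ and $\pi$ explicit and verifying that $T'$ is bounded and reasonable. The key combinatorial input is the family of congruences $h_n+s_{n,j}\equiv 0\pmod K$ (for $0\le j\le p_n-2$ and all large $n$) established inside the proof of Theorem~\ref{thm:bounded rank one}: these make the $\Z/K\Z$-coloring coherent across successive tower stages, identify $c^{-1}(0)$ as a union of $K$-blocks of consecutive levels of the high-order towers of $T$, and allow these $K$-blocks to be taken as the levels of a bounded rank-one tower structure for $T'$.
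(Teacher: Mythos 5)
Your proposal follows essentially the same route as the paper: the same case split via Theorem~\ref{thm:bounded rank one}, Theorem~\ref{thm:Sarnak for symbolic WM} in the weakly mixing case, and in the eigenvalue case the construction of a derived weakly mixing rank-one system (your induced map of $T^K$ on $c^{-1}(0)$ is exactly the paper's new symbolic model with building blocks of length $h'_n/K$ and spacer parameters $s'_{n,j}/K$) whose $K$-step tower is measure-isomorphic to, and topologically factors onto, the original, so that Theorem~\ref{thm:Sarnak for non WM} applies and Möbius orthogonality transfers along the factor map collapsing the $K$-periodic orbit to the fixed point. The congruences $h_n+s_{n,j}\equiv 0\pmod K$ that you single out are precisely the combinatorial input the paper extracts from the proof of Theorem~\ref{thm:bounded rank one} to make this quotient construction work.
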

\begin{proof}
Let $(\Omega',T')$ be the symbolic model of such a bounded rank-one construction, and $\mu'$ the nonatomic ergodic invariant measure. Denote by $(p'_n)$ and $(s'_{n,j})$, $n\ge1$, $0\le j\le p'_n-1$ the parameters of the construction, $(B'_n)$ the building blocks, and $(h'_n)$ their respective lengths. 

We know by Theorem~\ref{thm:spectral-disjointness} that for any $1\le p<q$ the continuous parts of the maximal spectral types of $(T')^p$ and $(T)'^q$ are mutually singular.
  By Theorem~\ref{thm:bounded rank one}, either the dynamical system $(\Omega',T',\mu')$ is weakly mixing and Theorem~\ref{thm:Sarnak for symbolic WM} applies, or it possesses a finite number of eigenvalues of the form $\varepsilon_K^j$ for some $K\ge 2$. In the latter case, the proof of Theorem~\ref{thm:bounded rank one} shows that, for $n$ large enough, the return time to the basis of tower~$n$ is always a multiple of $K$. In other words, there exists $n_0$ such that, for any $n\ge n_0$, two successive (canonical) occurrences of $B'_n$ have their starting positions separated by a multiple of $K$. Appending if necessary some extra spacers to the building blocks, but without changing the subshift $\Omega'$, we can always assume that for all $n\ge n_0$, $h'_{n}$ is a multiple of $K$. Then the parameters $(s'_{n,j})$  are themselves multiple of $K$ for all $n\ge n_0$. 
  
  Now let us define a new family of building blocks $(B_n)_{n\ge n_0}$. Set $h_{n_0}\egdef h'_{n_0}/K$, and $B_{n_0}\egdef 0^{h_{n_0}}$. Then, for all $n\ge n_0$ and all $0\le j\le p'_n-1$, set $p_n\egdef p'_n$, $s_{n,j}\egdef  s'_{n,j}/K$ and define inductively
  $$ B_{n+1} \egdef B_n 1^{s_{n,0}} B_n 1^{s_{n,1}} \cdots B_n 1^{s_{n,p_n-1}}. $$
  Observe that the length $h_n$ of $B_n$ is equal to $h'_n/K$. 
  
  These new building blocks $B_n$ define the symbolic model $(\Omega,T)$ associated to a bounded rank-one construction. Consider the system $(\tilde \Omega, \tilde T, \tilde \mu)$ constructed from $(\Omega,T)$ as in Theorem~\ref{thm:Sarnak for non WM}. Define the map $\varphi:\ \tilde\Omega\to\Omega'$ as follows: 
  \begin{itemize}
    \item Let $(\omega,i)\in\tilde\Omega$. If the zero-th coordinate of $\omega$ is the $j$-th letter of a building block $B_{n_0}$, let the zero-th coordinate of $\varphi(\omega,i)$ be the $(Kj+i)$-th letter of $B'_{n_0}$, otherwise let the zero-th coordinate of $\varphi(\omega,i)$ be a spacer.
    \item For all $j\in\Z$, define the $j$-th coordinate of $\varphi(\omega,i)$ as the zero-th coordinate of $\varphi(\tilde T^j(\omega,i))$.
  \end{itemize}
Then $\varphi$ is an isomorphism between $(\tilde \Omega, \tilde T, \tilde \mu)$ and $(\Omega',T',\mu')$, hence $(\tilde \Omega, \tilde T, \tilde \mu)$ satisfy the assumptions of Theorem~\ref{thm:Sarnak for non WM}, and Sarnak's conjecture holds for $(\tilde \Omega, \tilde T)$. But $\varphi$ is also a topological factor map from $(\tilde\Omega,\tilde T)$ to $(\Omega',T')$,  which proves that Sarnak's conjecture is also valid for $(\Omega',T')$. 
\end{proof}

\end{document}